\theoremstyle{definition}
\newtheorem{theorem}{Theorem}[section]
\newtheorem{proposition}[theorem]{Proposition}
\newtheorem{lemma}[theorem]{Lemma}
\newtheorem{corollary}[theorem]{Corollary}
\theoremstyle{remark}
\newtheorem{remark}[theorem]{Remark}
\newcommand{\R}{\mathbb{R}}
\newcommand{\ucal}{\mathcal{u}}
\newcommand{\Pcal}{\mathcal{P}}
\newcommand{\Xcal}{\mathcal{X}}
\newcommand{\ur}{\widetilde{u}}
\newcommand{\urm}{\widetilde{u}(\boldsymbol{\mu})}
\newcommand{\Mu}{\boldsymbol{\mu}}
\newcommand{\black}[1]{{\color{black} #1}}
\DeclareMathOperator{\Var}{Var}
\DeclareMathOperator{\rank}{rank}
\begin{document}

\title{Randomized residual-based error estimators for the Proper Generalized Decomposition approximation of parametrized problems}

\author{Kathrin Smetana\footnote{Faculty of Electrical Engineering, Mathematics \& Computer Science, University of Twente, Zilverling,
P.O. Box 217, 7500 AE Enschede, The Netherlands, \texttt{k.smetana@utwente.nl}}, Olivier Zahm\footnote{Univ. Grenoble Alpes, CNRS, Inria, Grenoble INP, LJK, 38000 Grenoble, France, \texttt{olivier.zahm@inria.fr}}}

\maketitle

\begin{abstract}
This paper introduces a novel error estimator for the Proper Generalized Decomposition (PGD) approximation of parametrized equations.
The estimator is intrinsically random: It builds on concentration inequalities of Gaussian maps and an adjoint problem with random right-hand side, which we approximate using the PGD. The effectivity of this randomized error estimator can be arbitrarily close to unity with high probability, allowing the estimation of the error with respect to any user-defined norm as well as the error in some quantity of interest.
The performance of the error estimator is demonstrated and compared with some existing error estimators for the PGD for a parametrized time-harmonic elastodynamics problem and the parametrized equations of linear elasticity with a high-dimensional parameter space.\\

\vspace*{\baselineskip}
\noindent
{\bf Keywords:} A posteriori error estimation, parametrized equations, Proper Generalized Decomposition, Monte-Carlo estimator, concentration phenomenon, goal-oriented error estimation
\end{abstract}

\section{Introduction}\label{sec1}
Parametrized models are ubiquitous in engineering applications, life sciences, hazard event prediction and many other areas. Depending on the application, the parameters may, for instance, represent the permeability of soil, the Young's modulus field, the geometry but also uncertainty in the data. In general, the goal is either to analyze the model response with respect to the parameter or to perform real-time simulations for embedded systems. In either case, the complexity of the model is the computational bottleneck, especially if computing the model response for one parameter value requires the solution to a complex system of partial differential equations (PDEs). Model order reduction methods such as the Reduced Basis (RB) method \cite{HeRoSt16,QuMaNe16,Haa17,RoHuPa08,VePrRoPa03}, the Proper Generalized Decomposition (PGD) \cite{AmMoChKe06,AmMoChKe07,Nouy07}, the hierarchical model reduction approach \cite{VogBab81a,PerErnVen10,OhlSme14,SmeOhl17,BTOS16}, or, more generally, tensor-based methods \cite{Hac12,Nou17} are commonly used to build a fast-to-evaluate approximation to the model response. 

This paper is concerned with the PGD, which has been developed for the approximation of models defined in multidimensional spaces, such as problems in quantum chemistry, financial mathematics or (high-dimensional) parametrized models. For the variational version of the PGD, where an energy functional is minimized, convergence has been proved for the Poisson problem in \cite{LeBLelMad09}. For a review on the PGD method see for instance \cite{ChiAmmCue2010,ChLaCu11,ChKeLe14} and we refer to \cite{ChiCue14} for an overview of how to use PGD in different applications such as augmented learning and process optimization. 

The goal of this paper is to introduce a novel error estimator for the PGD approximation.
Error estimation is of crucial importance, in particular 
to stop the PGD enrichment process as soon as the desired precision is reached.
To that end, we extend the randomized residual-based error estimator introduced in \cite{SmZaPa19} to the PGD. The randomized error estimator features several desirable properties: first, it does not require the estimation of stability constants such as a coercivity or inf-sup constant and the effectivity index is close to unity with prescribed lower and upper bounds at high probability. The effectivity index can thus be selected by the user, balancing computational costs and desired sharpness of the estimator. Moreover, the presented framework yields error estimators with respect to user-defined norms, for instance the $L^2$-norm or the $H^1$-norm; the approach also permits error estimation of linear quantities of interest (QoI). Finally, depending on the desired effectivity the computation of the error estimator is in general only as costly as the computation of the PGD approximation or even significantly less expensive, which makes our error estimator strategy attractive from a computational viewpoint. 
To build this estimator, we first estimate the norm of the error with a Monte Carlo estimator using Gaussian random vectors whose covariance is chosen according to the desired error measure, e.g., user-defined norms or quantity of interest. Then, we introduce a dual problem with random right-hand side the solution of which allows us to rewrite the error estimator in terms of the residual of the original equation. Approximating the random dual problems with the PGD yields a fast-to-evaluate error estimator that has low marginal cost. 

Next, we relate our error estimator to other stopping criteria that have been proposed for the PGD solution. In \cite{AmChHu10} the authors suggest letting an error estimator in some linear QoI steer the enrichment of the PGD approximation. In order to estimate the error they employ an adjoint problem with the linear QoI as a right-hand side and approximate this adjoint problem with the PGD as well. It is important to note that, as observed in \cite{AmChHu10}, this type of error estimator seems to require in general a more accurate solution of the dual than the primal problem. In contrast, the framework we present in this paper often allows using a dual PGD approximation with significantly less terms than the primal approximation. An adaptive strategy for the PGD approximation of the dual problem for the error estimator proposed in \cite{AmChHu10} was suggested in \cite{GNLC13} in the context of computational homogenization. In \cite{Alfetal15} the approach proposed in \cite{AmChHu10} is extended to problems in nonlinear solids by considering a suitable linearization of the problem.

An error estimator for the error between the exact solution of a parabolic PDE and the PGD approximation in a suitable norm is derived in \cite{LadCha11} based on the constitutive relation error and the Prager-Synge theorem. The discretization error is thus also assessed, and the computational costs of the error estimator depend on the number of elements in the underlying spatial mesh. Relying on the same techniques an error estimator for the error between the exact solution of a parabolic PDE and a PGD approximation in a linear QoI was proposed in \cite{LadCha12}, where the authors used the PGD to approximate the adjoint problem. In \cite{Moi13} the hypercircle theory introduced by Prager and Synge is invoked to derive an error between the exact solution of the equations of linear elasticity and the PGD approximation. While the estimators in \cite{LadCha11,LadCha12, Moi13} are used to certify the PGD approximation, in \cite{Chetal17} the authors slightly modify the techniques used in \cite{LadCha11,LadCha12} to present an error estimator that is used to drive the construction of the PGD approximation within an adaptive scheme. To that end, the authors assume that also the equilibrated flux can be written in a variable separation form. In contrast to the error estimator we introduce in this paper, the estimators presented in \cite{LadCha11,LadCha12, Moi13, Chetal17} also take into account the discretization error; for the a posteriori error estimator suggested in this paper this is the subject of a forthcoming work. Moreover, in \cite{LadCha11,LadCha12,Moi13, Chetal17} the authors consider (parametrized) coercive elliptic and parabolic problems; it seems that the techniques just described have not been used yet to address problems that are merely inf-sup stable. 

Error estimators based on local error indicators are proposed to drive mesh adaptation within the construction of the PGD approximation in \cite{Nadetal15}. In \cite{BFNoZa14}, an ideal minimal residual formulation is proposed in order to build optimal PGD approximations. In that work, an error estimator with controlled effectivity is derived from the solution of some adjoint problem.
Finally, a PGD approximation of the adjoint problem is used to estimate a QoI in \cite{KCLP19}. The latter is then employed to constrain the primal approximation, aiming at a (primal) PGD approximation more tailored to the considered QoI \cite{KCLP19}.

The randomized a posteriori error estimator proposed in \cite{SmZaPa19}, which we extend in this paper to the PGD, is inspired by the probabilistic error estimator for the approximation error in the solution of a system of ordinary differential equations introduced in \cite{CaoPet04,HoPeSe07}. Here, the authors invoke the small statistical sample method from \cite{KenLau94}, which estimates the norm of a vector by its inner product with a random vector drawn uniformly at random on the unit sphere. 

We note that randomized methods for error estimation are gaining interest in the reduced order modeling community in particular and in the context of parametrized PDEs generally. For instance in \cite{BalNou18}, random sketching techniques are used within the context of the RB method to speed-up the computation of the dual norm of the residual used as an error indicator. Here, the authors exploit that the residual lies in a low-dimensional subspace, allowing them to invoke arguments based on $\varepsilon$-nets in order to certify the randomized algorithm. In \cite{JaNoPr16} for the RB method a probabilistic a posteriori error bound for linear scalar-valued QoIs is proposed, where randomization is done by assuming that the parameter is a random variable on the parameter set. In \cite{zahm2016interpolation}, an interpolation of the operator inverse is built via a Frobenius-norm projection and computed efficiently using randomized methods. An error estimator is obtained by measuring the norm of residual multiplied by the interpolation of the operator inverse, used here as a preconditioner. Next, in the context of localized model order reduction a reliable and efficient probabilistic a posteriori error estimator for the operator norm of the difference between a finite-dimensional linear operator and its orthogonal projection onto a reduced space is derived in \cite{BuhSme18}. Finally in the recent paper \cite{Eetal19} the authors use concentration inequalities to bound the generalization error in the context of statistical learning for high-dimensional parametrized PDEs in the context of Uncertainty Quantification.

The remainder of this article is organized as follows. After introducing the problem setting and recalling the PGD in Section \ref{sec2}, we introduce an unbiased randomized a posteriori error estimator in Section \ref{sec3}. As this error estimator still depends on the high-dimensional solutions of dual problems, we suggest in Section \ref{sec4} how to construct dual PGD approximations for this setting efficiently and propose an algorithm where the primal and dual PGD approximations are constructed in an intertwined manner. In Section \ref{sec5} we compare the effectivity of the randomized a posteriori error estimator with a stagnation-based error estimator and the dual norm of the residual for a time-harmonic elastodynamics problem and a linear elasticity problem with $20$-dimensional parameter space. We close with some concluding remarks in Section \ref{sec6}.

\section{Problem setting and the Proper Generalized Decomposition}\label{sec2}

We denote by $\Mu=(\Mu_1,\hdots,\Mu_p)$ a vector of $p$ parameters which belongs to a set $\mathcal{P}\subset\R^p$ of admissible parameters. Here, we assume that $\mathcal{P}$ admits a product structure $\mathcal{P}=\Pcal_{1} \times\hdots\times \Pcal_{p}$, where $\Pcal_{i}\subset\R$ is the set of admissible parameters for $\Mu_i$, $i=1,\hdots,p$. For simplicity we assume that $\Pcal_{i}=\{\Mu_i^{(1)},\hdots,\Mu_i^{(\#\Pcal_{i})}\}$ is a finite set of parameters (obtained for instance as an interpolation grid or by a Monte Carlo sample of some probability measure) so that $\mathcal{P}$ is a grid containing $\#\Pcal=\prod_{i=1}^d\#\Pcal_{i}$ points.

Let $D \subset \mathbb{R}^{d}$ be a bounded spatial domain and let $\mathcal{A}(\Mu): \mathcal{X} \rightarrow \mathcal{X}'$ be a parametrized linear differential operator, where $\mathcal{X}$ is a suitable Sobolev space of functions defined on $D$; $\mathcal{X}'$ denotes the dual space of $\mathcal{X}$. We consider the following problem: for any $\Mu \in \mathcal{P}$ and $\mathcal{f}(\Mu ) \in \mathcal{X}'$, find $\mathcal{u}(\Mu) \in \mathcal{X}$ such that 
\begin{equation}\label{eq:PDE}
\mathcal{A}(\Mu) \mathcal{u}(\Mu ) = \mathcal{f}(\Mu ) \qquad \text{in } \mathcal{X}'.
\end{equation}
We ensure well-posedness of problem \eqref{eq:PDE} by requiring that
\begin{equation*}\label{eq:inf-sup}
\beta(\Mu):=\inf_{v \in \mathcal{X}} \sup_{w \in \mathcal{X}} \frac{\langle \mathcal{A}(\Mu ) v, w \rangle_{\mathcal{X}^{'},\mathcal{X}}}{\|v\|_{\mathcal{X}}\|w\|_{\mathcal{X}}} > 0 
\qquad\text{and}\qquad
\gamma (\Mu ):=\sup_{v \in \mathcal{X}} \sup_{w \in \mathcal{X}} \frac{\langle \mathcal{A}(\Mu ) v, w \rangle_{\mathcal{X}^{'},\mathcal{X}}}{\|v\|_{\mathcal{X}}\|w\|_{\mathcal{X}}}  < \infty  ,
\end{equation*}
hold for all $\Mu \in \mathcal{P}$. Here $\langle \cdot , \cdot \rangle_{\mathcal{X}^{'},\mathcal{X}}$ denotes the duality pairing of $\mathcal{X}'$ and $\mathcal{X}$.

Next, we introduce a finite dimensional space $\mathcal{X}_{N} := \text{span}\{\psi_1,\hdots,\psi_N\}\subset \mathcal{X}$ defined as the span of $N\gg 1$ basis functions $\psi_1,\hdots,\psi_N$. For any $\boldsymbol{\mu} \in \mathcal{P}$, we define the high fidelity approximation of \eqref{eq:PDE} as the function $\mathcal{u}_{N}(\boldsymbol{\mu}) \in \mathcal{X}_{N}$ that solves the variational problem
\begin{equation}\label{eq:PDE_discrete}
 \langle \mathcal{A}(\Mu) \mathcal{u}_{N}(\Mu ) , w_{N} \rangle_{\mathcal{X}^{'},\mathcal{X}} = \langle\mathcal{f}(\Mu ) , w_{N}\rangle_{\mathcal{X}^{'},\mathcal{X}} \qquad \text{ for all } w_{N}\in\mathcal{X}_{N}.
\end{equation}
Again, we ensure wellposed-ness of problem \eqref{eq:PDE_discrete} by requiring the discrete inf-sup/sup-sup conditions 
\begin{equation}\label{eq:inf-sup_discrete} 
\beta_{N}(\Mu )  := \inf_{v \in \mathcal{X}_{N}} \sup_{w \in \mathcal{X}_{N}} \frac{\langle \mathcal{A}(\Mu ) v, w \rangle_{\mathcal{X}^{'},\mathcal{X}}}{\|v\|_{\mathcal{X}}\|w\|_{\mathcal{X}}} > 0 
\qquad\text{and}\qquad
\gamma_{N} (\Mu ):=\sup_{v \in \mathcal{X}_{N}} \sup_{w \in \mathcal{X}_{N}} \frac{\langle \mathcal{A}(\Mu ) v, w \rangle_{\mathcal{X}^{'},\mathcal{X}}}{\|v\|_{\mathcal{X}}\|w\|_{\mathcal{X}}}  < \infty ,
\end{equation}
to hold for all $\Mu \in \mathcal{P}$. Expressing $\mathcal{u}_{N}(\Mu )$ in the basis $\mathcal{u}_{N}(\Mu ) = \sum_{i=1}^{N} u_{i}(\Mu ) \psi_{i}$ and defining the vector of coefficient $u(\Mu ):=(u_{1}(\Mu ),\hdots,u_{N}(\Mu )) \in \R^{N}$, we rewrite \eqref{eq:PDE_discrete} as a parametrized algebraic system of linear equations
\begin{equation}\label{eq:AUB}
A(\Mu) u(\Mu ) = f(\Mu ),
\end{equation}
where the matrix $A(\Mu ) \in \R^{N\times N}$ and the vector $f(\Mu ) \in \R^{N}$ are defined as $A_{ij}(\Mu ) = \langle \mathcal{A}(\Mu )\psi_{j},\psi_{i} \rangle_{\mathcal{X}^{'},\mathcal{X}}$ and $f_i(\Mu ) = \langle \mathcal{f}(\Mu ),\psi_{i} \rangle_{\mathcal{X}^{'},\mathcal{X}}$ for any $1\leq i,j\leq N$. Finally, with a slight abuse of notation, we denote by $\|\cdot\|_{\mathcal{X}_N}$ and $\|\cdot\|_{\mathcal{X}_N'}$ the norms defined on the algebraic space $\R^N$ such that 
\begin{equation}\label{eq:NormsDefinition}
 \|v\|_{\mathcal{X}_N} := \| \sum_{i=1}^N v_i \psi_i \|_\mathcal{X},
 \qquad\text{and}\qquad
 \|v\|_{\mathcal{X}_N'} := \sup_{w \in \mathcal{X}_{N}} \frac{\langle \sum_{i=1}^N v_i \psi_i, w \rangle_{\mathcal{X}^{'},\mathcal{X}}}{\|w\|_{\mathcal{X}}} \qquad \text{ for any } v\in\R^N.
\end{equation}
These norms can be equivalently defined by $\|v\|_{\mathcal{X}_N}^2 = v^TR_{\Xcal}v$ and $\|v\|_{\mathcal{X}_N'}^2 = v^TR_{\Xcal}^{-1}v$ for any $v\in\R^N$, where $R_{\Xcal}\in\R^{N\times N}$ is the gram matrix of the basis $(\psi_1,\hdots,\psi_N)$ such that $(R_{\Xcal})_{ij}=(\psi_i,\psi_j)_\mathcal{X}$ where $(\cdot,\cdot)_\mathcal{X}$ is the scalar product in $\Xcal$.

\subsection{The Proper Generalized Decomposition}\label{subsec:PGD}

To avoid the curse of dimensionality we compute an approximation of $u(\Mu)=u(\Mu_1,\hdots,\Mu_p) \in\R^N$ using the Proper Generalized Decomposition (PGD). The PGD relies on the separation of the variables $\Mu_1,\hdots,\Mu_p$ to build an approximation of the form
\begin{equation}\label{eq:tensor_appr}
\widetilde{u}^{M}(\Mu_{1},\hdots ,\Mu_{p}) = \sum_{m=1}^{M} u^{m}_{\mathcal{X}} ~  u^{m}_{1}(\Mu_1)  \cdots  u^{m}_{p}(\Mu_p), 
\end{equation} 
where the $M$ vectors $u^{m}_{\mathcal{X}} \in\R^N$ and the $p\times M$ scalar univariate functions $u^{m}_{i}:\Pcal_i\rightarrow\R$ need to be constructed. 
An approximation as in \eqref{eq:tensor_appr} can be interpreted as a low-rank tensor approximation in the canonical format, see \cite{Nou17}. With this interpretation, $M$ is called the canonical rank of $\widetilde{u}^{M}$ seen as a tensor $\widetilde{u}^{M} = \sum_{m=1}^{M} u^{m}_{\mathcal{X}} \otimes u^{m}_{1} \otimes\hdots\otimes  u^{m}_{p}$. 
Let us note that if the space of spatial functions $\mathcal{X}$ was a tensor product space $\mathcal{X}=\mathcal{X}^1\otimes\hdots\otimes \mathcal{X}^d$, then one could have further separated the spatial variables and obtained approximations like $\widetilde{u}^{M} = \sum_{m=1}^{M} u^{m}_{\mathcal{X}^1} \otimes\hdots\otimes  u^{m}_{\mathcal{X}^d} \otimes u^{m}_{1} \otimes\hdots\otimes  u^{m}_{p}$. This idea is however not pursued in this paper.
The PGD is fundamentally an $\ell^2$-based model order reduction technique in the sense that it aims at constructing $\widetilde{u}^{M}$ such that the root mean squared (RMS) error
\begin{equation}\label{eq:IdealPGDError}
 \|u-\widetilde{u}^M \| := \sqrt{\frac{1}{\#\mathcal{P}}\sum_{\Mu\in\mathcal{P}} \|u(\Mu)-\widetilde{u}^M(\Mu)\|_{\mathcal{X}_N}^2 },
\end{equation}
is controlled. However, minimizing this error over the functions $\widetilde{u}^{M}$ as in \eqref{eq:tensor_appr} is not feasible in practice. We now recall the basic ingredients of the PGD for the computation of $\widetilde{u}^M(\Mu)$, referring for instance to \cite{ChiAmmCue2010,ChLaCu11,ChKeLe14} for further details.

\subsubsection*{Progressive PGD}

The progressive PGD relies on a greedy procedure where we seek the $M$ terms in \eqref{eq:tensor_appr} one after another. After $M-1$ iterations, we build a rank-one correction $u^{M}_{\mathcal{X}} ~  u^{M}_{1}(\Mu_1)  \hdots  u^{M}_{p}(\Mu_p)$ of the current approximation $\widetilde{u}^{M-1}(\Mu)$ by minimizing some error functional described below. Once this correction is computed, we update the approximation 
\begin{equation}\label{eq:GreedyPGD}
 \widetilde{u}^{M}(\Mu) = \widetilde{u}^{M-1}(\Mu) + u^{M}_{\mathcal{X}} ~  u^{M}_{1}(\Mu_1)  \cdots  u^{M}_{p}(\Mu_p),
\end{equation}
and we increment $M\leftarrow M+1$. Such a procedure is called \emph{pure greedy} because it never updates the previous iterates.

We now describe how to construct the rank-one correction. A commonly used error functional is the mean squared residual norm, which yields the so-called \emph{Minimal Residual PGD} \cite{BFNoZa14,FHMM13,CaEhLe13}. The correction $u^{M}_{\mathcal{X}} ~  u^{M}_{1}(\Mu_1)  \cdots  u^{M}_{p}(\Mu_p)$ is defined as a solution to
\begin{equation}\label{eq:MinResPGD}
\min_{u_{\mathcal{X}}\in\R^N} 
 \min_{u_{1}:\mathcal{P}_1\rightarrow\R} 
 \cdots
 \min_{u_{p}:\mathcal{P}_p\rightarrow\R} 
 ~ \sum_{\Mu\in\mathcal{P}} \|A(\Mu)\Big(\widetilde{u}^{M-1}(\Mu) + u_{\mathcal{X}} u_{1}(\Mu_1)  \cdots  u_{p}(\Mu_p)\Big) - f(\Mu)\|_{2}^2,
\end{equation}
where $\|\cdot\|_2$ denotes the canonical norm of $\R^N$. When the matrix $A(\Mu)$ is symmetric definite positive (SPD), we can alternatively use the \emph{Galerkin PGD} which consists in minimizing the mean squared energy
\begin{equation}\label{eq:MinEnergyPGD}
\min_{u_{\mathcal{X}}\in\R^N} 
 \min_{u_{1}:\mathcal{P}_1\rightarrow\R} 
 \cdots
 \min_{u_{p}:\mathcal{P}_p\rightarrow\R} 
 ~ \sum_{\Mu\in\mathcal{P}} \|A(\Mu)\Big(\widetilde{u}^{M-1}(\Mu) + u_{\mathcal{X}} u_{1}(\Mu_1)  \cdots  u_{p}(\Mu_p)\Big) - f(\Mu)\|_{A(\Mu)^{-1}}^2 ,
\end{equation}
where the residual is measured with the norm $\|\cdot\|_{A(\Mu)^{-1}} = \sqrt{(\cdot)^TA(\Mu)^{-1}(\cdot)}$. The Galerkin PGD is usually much more efficient compared to the Minimal Residual PGD, but the latter is much more stable for non SPD operators. Both for the Minimal Residual PGD \eqref{eq:MinResPGD} and the Galerkin PGD \eqref{eq:MinEnergyPGD} Conditions \eqref{eq:inf-sup_discrete} are sufficient to ensure the convergence of the pure greedy algorithm outlined above, see \cite{BFNoZa14}. 

A simple and yet efficient approach to solve \eqref{eq:MinResPGD} or \eqref{eq:MinEnergyPGD} is the \emph{alternating least squares} (ALS) method. It relies on the fact that each of the $p+1$ minimization problems is a least squares problem (the cost function being quadratic with respect to each of the $u_\mathcal{X},u_1,u_2,\hdots$) and thus is simple to solve. ALS consists in iteratively solving each of these least squares problems one after the other while keeping the other variables fixed. That is: starting with a random initialization $u_{\mathcal{X}}^{(0)}, u_{1}^{(0)},  \hdots,  u_{p}^{(0)}$, we first compute $u_{\mathcal{X}}^{(1)}$ solution to \eqref{eq:MinResPGD} (or to \eqref{eq:MinEnergyPGD}) with $u_{1}=u_{1}^{(0)},  \hdots,  u_{p}=u_{p}^{(0)}$ fixed. Then we compute $u_{1}^{(1)}$ solution to \eqref{eq:MinResPGD} (or to \eqref{eq:MinEnergyPGD}) with $u_{\mathcal{X}}=u_{\mathcal{X}}^{(1)}, u_{2}=u_{2}^{(0)}, \hdots,  u_{p}=u_{p}^{(0)}$ fixed etc. This yields a sequence of updates $u_{\mathcal{X}}^{(1)}\rightarrow u_{1}^{(1)}\rightarrow\cdots\rightarrow u_{p}^{(1)}\rightarrow u_{\mathcal{X}}^{(2)}\rightarrow u_{1}^{(2)} \rightarrow\cdots$ which we stop once the stagnation is reached. In practice, few iterations (three or four) suffice.

\subsubsection*{Stopping criterion for the greedy enrichment process}

Ideally we want to stop the greedy iteration process in $M$ as soon as the relative RMS error $\|u-\widetilde{u}^{M}\|/\|u\|$ falls below a certain prescribed tolerance, which is why an error estimator for the relative RMS error is desirable. In \emph{stagnation-based error estimators} the unknown solution $u$ is replaced by a reference solution $\widetilde{u}^{M+k}$ for some $k\geq1$
\begin{equation}\label{eq:StagnationErrorEstimator}
 \Delta_{M,k}^\text{stag} := \frac{\|\widetilde{u}^{M+k}-\widetilde{u}^{M}\|}{\|\widetilde{u}^{M+k}\|},
\end{equation}
where the norm $\|\cdot\|$ is defined as in \eqref{eq:IdealPGDError}.
The estimator \eqref{eq:StagnationErrorEstimator} is also often called a \emph{hierarchical error estimator}, considering two different PGD approximations: a coarse one $\widetilde{u}^{M}$ and a fine one $\widetilde{u}^{M+k}$. The quality of such an error estimator depends on the convergence rate of the PGD algorithm: a slow PGD convergence would require choosing a larger $k$ to obtain an accurate estimate of the relative RMS error. Needless to say that the error estimator $\Delta_{M,k}^\text{stag}$ is more expensive to compute when $k\gg1$. Proving that $\Delta_{M,k}^\text{stag}$ is an upper bound of the error and bounding the effectivity typically requires satisfying a so-called saturation assumption, i.e., having a uniform bound on the convergence rate of the PGD, a requirement which is hard to ensure in practice.

As one example of a \emph{residual-based error estimator} we consider the relative RMS residual norm 
\begin{equation}\label{eq:RelativeResNorm}
 \Delta_{M}^\text{res} := \sqrt{\frac{\frac{1}{\#\mathcal{P}}\sum_{\Mu\in\mathcal{P}} \|A(\Mu)\widetilde{u}^M(\Mu)-f(\Mu)\|_{\mathcal{X}_N'}^2}{\frac{1}{\#\mathcal{P}}\sum_{\Mu\in\mathcal{P}} \|f(\Mu)\|_{\mathcal{X}_N'}^2 }},
\end{equation}
where the norm $\|\cdot\|_{\mathcal{X}_N'}$ has been defined in \eqref{eq:NormsDefinition}. 
The advantage of this error estimator is that it comes with the guarantee that
\begin{equation}\label{eq:RelativeResNormEffectivity}
 \kappa_N^{-1} \frac{\|u-\widetilde{u}^{M}\|}{\|u\|} \leq  \Delta_{M}^\text{res} \leq \kappa_N \frac{\|u-\widetilde{u}^{M}\|}{\|u\|} ,
\end{equation}
for any $M$, where $\kappa_N = \max_{\Mu\in\mathcal{P}} \gamma_N(\Mu)/\beta_N(\Mu) \geq1$ denotes the supremum of the condition number of $A(\Mu)$ seen as an operator from $(\R^N,\|\cdot\|_{\mathcal{X}_N})$ to $(\R^N,\|\cdot\|_{\mathcal{X}_N'})$. This inequality can be easily proved using Assumption \eqref{eq:inf-sup_discrete} and Definition \eqref{eq:NormsDefinition}. Inequality \eqref{eq:RelativeResNormEffectivity} ensures that the effectivity of $\Delta_{M}^\text{res}$ is contained in $[\kappa_N^{-1},\kappa_N]$. Note however that estimating $\kappa_N$ (or a sharp upper bound of it) is difficult in practice as it requires estimating the inf-sup/sup-sup constants $\beta_N(\Mu)$ and $\gamma_N(\Mu)$, a task that is general rather costly \cite{Hetal07,Cetal09,Hetal10}. Moreover, for ill-conditioned problems as, say, the Helmholtz equation or a time-harmonic elastodynamics problem presented in section \ref{sec5}, the constant $\beta_N(\Mu)$ can be arbitrarily close to zero, yielding a very large $\kappa_N$.

In \cite{AmChHu10} the authors suggest letting an estimator for the error in some linear QoI steer the enrichment of the PGD approximation. In order to estimate the error they employ an adjoint problem with the linear QoI as a right-hand side and approximate this adjoint problem with the PGD as well. In \cite{LadCha11} the constitutive relation error and the Prager-Synge theorem is employed to derive an error estimator for the error between the exact solution of the PDE and the PGD approximation in a suitable norm. The error estimator requires the construction of equilibrated fluxes and has been proposed to use within an adaptive scheme to construct the PGD approximation in \cite{Chetal17}.

\section{A randomized a posteriori error estimator}\label{sec3}

In this paper we aim at estimating the errors
\begin{equation}\label{eq:error}
\|u(\Mu)-\widetilde{u}^M(\Mu)\|_{\Sigma},
\qquad
 \sqrt{\frac{1}{\#\mathcal{P}}\sum_{\Mu\in\mathcal{P}} \|u(\Mu)-\widetilde{u}^M(\Mu)\|_{\Sigma}^2},
 \qquad\text{or}\qquad
 \sqrt{\frac{\frac{1}{\#\mathcal{P}}\sum_{\Mu\in\mathcal{P}} \|u(\Mu)-\widetilde{u}^M(\Mu)\|_{\Sigma}^2}{\frac{1}{\#\mathcal{P}}\sum_{\Mu\in\mathcal{P}} \|u(\Mu)\|_{\Sigma}^2}},
\end{equation}
where $\Sigma\in\mathbb{R}^{N\times N}$ is a symmetric positive semi-definite matrix which is chosen by the user according to the desired error measure, see subsection \ref{subsec:3.1}. This means that $\|\cdot\|_\Sigma = \sqrt{(\cdot)^T\Sigma(\cdot)}$ is a semi-norm unless $\Sigma$ is SPD, in which case $\|\cdot\|_\Sigma$ is a norm.
We show in subsection \ref{subsec:estimateNorm} that, at high probability, we can estimate accurately $\|v\|_\Sigma$ for any $v\in\R^N$ by means of \emph{random Gaussian maps}. In subsection \ref{subsec:estimate many vectors} we use union bound arguments to estimate the errors $\|u(\Mu)-\widetilde{u}^M(\Mu)\|_{\Sigma}$ and the norms $\|u(\Mu)\|_{\Sigma}$ simultaneously for a finite number of parameters $\Mu$,  which allows to estimate all quantities in \eqref{eq:error}.

\subsection{Choice of the error measure}\label{subsec:3.1}

If we wish to estimate the error in the natural norm $\|\cdot\|_{\Sigma}=\|\cdot\|_{\mathcal{X}_N}$ defined in \eqref{eq:NormsDefinition}, we simply choose $\Sigma=R_{\Xcal}$, where we recall that $(R_{\Xcal})_{ij}=(\psi_i,\psi_j)_\mathcal{X}$ is the gram matrix of the basis $(\psi_1,\hdots,\psi_N)$. 
Alternatively, we can consider the error in the $L^{2}(D)$-norm by letting $\Sigma=R_{L^{2}(D)}$ where the matrix $R_{L^{2}(D)}$ is defined by $(R_{L^{2}(D)})_{ij}=(\psi_i,\psi_j)_{L^{2}(D)}$. The presented framework also encompasses estimating the error in some vector-valued quantity of interest defined as a linear function of $u(\Mu)$, say 
$$
 s(\Mu)=L  u(\Mu) ~ \in\mathbb{R}^m ,
$$
for some extractor $L\in\mathbb{R}^{m \times N}$.
In this context we are interested in estimating $\| s(\Mu) - L\,\widetilde u(\Mu) \|_{\mathcal{W}}$ where $\|\cdot\|_{\mathcal{W}}$ is some norm associated with a SPD matrix $R_{\mathcal{W}}\in\R^{m\times m}$. For example, one can be interested in computing the approximation error over a subdomain $D_{sub} \subsetneq D$, that is, estimating the error $\ucal (\Mu )|_{D_{sub}}  - \widetilde \ucal (\Mu )|_{D_{sub}}$ measured in the $L^2(D_{sub})$-norm. In this situation, $L$ would extract the relevant entries from the vector $\urm$ and $R_{\mathcal{W}}$ would be defined by  $(R_{\mathcal{W}})_{ij} = (\psi_i|_{D_{sub}},\psi_j|_{D_{sub}})_{L^{2}(D_{sub})}$. 
% Similarly, we can also estimate the error in the trace of the solution on some part of $\partial D$ (cf.~\cite{SmZaPa19}). 
With the choice $\Sigma = L^T R_W L$ we can write
\begin{align*}
 \| u(\Mu) - \widetilde u(\Mu) \|_\Sigma^2
 &= (u(\Mu)-\widetilde u(\Mu))^T \big( L^T R_W L \big) (u(\Mu)-\widetilde u(\Mu)) =  \| s(\Mu) - L\widetilde u(\Mu) \|_{\mathcal{W}}^2 ,
\end{align*}
so that measuring the error with respect to the norm $\|\cdot\|_\Sigma$ gives the error associated with the QoI. Notice that if $m<N$ the matrix $\Sigma$ is necessarily singular and then $\|\cdot\|_\Sigma$ is a semi-norm. Finally, consider the scalar-valued QoI given by $s(\Mu)= l^T u(\Mu)$ where $l\in\mathbb{R}^N$. This corresponds to the previous situation with $m=1$ and $L=l^T$. The choice $\Sigma = l\,l^T $ yields $\| u(\Mu) - \widetilde u(\Mu) \|_\Sigma^2= |s(\Mu) - L\widetilde u(\Mu) |$, where $|\cdot|$ denotes the absolute value.

\subsection{Estimating norms using Gaussian maps}\label{subsec:estimateNorm}

Random maps have proved themselves to be a very powerful tool to speed up computations in numerical linear algebra. They can also be exploited to estimate the norm of a vector very efficiently. In this subsection, we will present the required basics to first obtain an estimator for the norm of a fixed vector and then for a set of vectors. The latter is for instance of relevance if we want to compute the error \eqref{eq:error}. For a thorough introduction on how to estimate certain quantities in numerical linear algebra by random maps we refer to the recent monograph \cite{Ver18}; we follow subsection 2.2 of \cite{SmZaPa19} here.

First, we will define a suitable random map $\Phi: \mathbb{R}^{N}  \rightarrow \R^{K}$ with $K\ll N$ that can be used to estimate the (semi-)norm $\|\cdot\|_\Sigma$ by $\|\Phi\cdot\|_2$, where $\|\cdot\|_2$ is the Euclidean norm in $\mathbb{R}^K$. To that end, let $Z\sim\mathcal{N}(0,\Sigma)$ be a zero mean Gaussian random vector in $\mathbb{R}^N$ with covariance matrix $\Sigma\in\mathbb{R}^{N\times N}$. We emphasize that the user first selects the desired error measure which yields the (semi-)norm $\|\cdot\|_\Sigma$ and, subsequently, the covariance matrix is chosen as that very same matrix. Given a vector $v\in\mathbb{R}^N$ we can write
$$
 \|v\|_\Sigma^2 
 = v^T \Sigma v 
 = v^T \mathbb{E}( ZZ^T )v 
 = \mathbb{E}( (Z^T v)^2 ),
$$
where $\mathbb{E}(\cdot)$ denotes the expected value. This means that $(Z^T v)^2$ is an unbiased estimator of $\|v\|_\Sigma^2$. We may then use $K$ independent copies of $Z$, which we denote by $Z_1,\hdots,Z_K$, to define the random map $\Phi\in\mathbb{R}^{K\times N}$ as $\Phi_{(i,:)}=(1/\sqrt{K})Z_i^T$, where $(i,:)$ denotes the $i$-th row. Then, we can write
\begin{equation}\label{eq:defPhi}
 \| \Phi v \|_2^2 = \frac{1}{K}\sum_{i=1}^K  ( Z_i^T v )^2 \qquad \text{for any } v\in \mathbb{R}^N.
\end{equation}
We see that $\| \Phi v \|_2^2$ is a $K$-sample Monte-Carlo estimator of $\mathbb{E}( (Z^T v )^2 ) = \|v\|_\Sigma^2$. By the independence of the $Z_i$'s, the variance of $\| \Phi v \|_2^2$ is $\Var( \| \Phi v \|_2^2  ) = \frac{1}{K} \Var( (Z^Tv)^2  )$. Increasing $K$ thus allows to reduce the variance of the estimate, and we get a more precise approximation of $\|v\|_\Sigma^2$. However, the variance is not always the best criterion to access the quality of the estimator.

Given a vector $v$ and a realization of $\Phi$, how much $\| \Phi v \|_2^2$ underestimates or overestimates $\|v\|_\Sigma^2$? Because $\Phi$ is a random variable, the answer to that question will hold only up to a certain probability which we need to quantify. To that end, we first note that the random variables $(Z_i^T v ) / \|v\|_\Sigma$ for $i=1,\hdots,K$ are independent standard normal random variables so that we have
\begin{equation}\label{eq:chi-squared}
 \| \Phi v \|_2^2 = \frac{\|v\|_\Sigma^2}{K} \sum_{i=1}^K \Big( \frac{Z_i^T v}{\|v\|_\Sigma}\Big)^2 \sim \frac{\|v\|_\Sigma^2}{K} Q ,
\end{equation}
where $Q \sim \chi^2(K)$ is a random variable which follows a chi-squared distribution with $K$ degrees of freedom. Denoting by $\mathbb{P}\{A\}$ the probability of an event $A$ and by $\overline{A}$ the complementary event of $A$, the previous relation yields
\begin{align*}
 \mathbb{P}\Big\{ w^{-1}\|v\|_\Sigma \leq \| \Phi v \|_2 \leq w \|v\|_\Sigma \Big\}
 &=1-\mathbb{P}\big\{  \overline{ Kw^{-2} \leq  Q \leq Kw^2 }  \big\} ,
\end{align*}
for any $w\geq1$. Then for any given (fixed) vector $v\in \mathbb{R}^N$, the probability that a realization of $\| \Phi v \|_2$ lies between $w^{-1}\|v\|_\Sigma$ and $w\|v\|_\Sigma$ is independent of $v$ but also independent of the dimension $N$. The following proposition gives an upper bound for the failure probability $\mathbb{P}\big\{  \overline{ Kw^{-2} \leq  Q \leq Kw^2 }  \big\} $ in terms of $w$ and $K$.

\begin{proposition}[Proposition 2.1 in \cite{SmZaPa19}]\label{prop:Chi2Tail}
 Let $Q\sim\chi^2(K)$ be a chi-squared random variable with $K\geq3$ degrees of freedom. For any $w>\sqrt{e}$ we have 
 \begin{equation*}\label{eq:Chi2Tail}
  \mathbb{P}\big\{  \overline{ Kw^{-2} \leq  Q \leq Kw^2 }  \big\} 
  \leq \Big( \frac{\sqrt{e}}{w} \Big)^{K} .
 \end{equation*}
\end{proposition}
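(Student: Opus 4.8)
The plan is to decompose the complementary event into its two one-sided tails and to control each by an exponential (Chernoff--Cramér) bound built from the moment generating function of $Q$. Since $\{Q<Kw^{-2}\}$ and $\{Q>Kw^2\}$ are disjoint, I would first write
$$\mathbb{P}\big\{\overline{Kw^{-2}\leq Q\leq Kw^2}\big\} = \mathbb{P}\{Q>Kw^2\} + \mathbb{P}\{Q<Kw^{-2}\}.$$
Recalling that a $\chi^2(K)$ variable satisfies $\mathbb{E}[e^{tQ}]=(1-2t)^{-K/2}$ for $t<1/2$, Markov's inequality applied to $e^{tQ}$ (with $t\in(0,1/2)$) and to $e^{-sQ}$ (with $s>0$) gives, for any $a>1>b>0$,
$$\mathbb{P}\{Q\geq Ka\}\leq \inf_{0<t<1/2} e^{-tKa}(1-2t)^{-K/2}, \qquad \mathbb{P}\{Q\leq Kb\}\leq \inf_{s>0} e^{sKb}(1+2s)^{-K/2}.$$

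The inner minimizations are explicit: the optimizers satisfy $1-2t=1/a$ and $1+2s=1/b$, which is admissible precisely when $a>1$ and $b<1$, i.e. exactly the regime $a=w^2$, $b=w^{-2}$ with $w>1$. Substituting yields the sharp Cramér rates
$$\mathbb{P}\{Q\geq Kw^2\}\leq \big(w^2 e^{1-w^2}\big)^{K/2}, \qquad \mathbb{P}\{Q\leq Kw^{-2}\}\leq \big(w^{-2}e^{1-w^{-2}}\big)^{K/2},$$
both of the form $(a\,e^{1-a})^{K/2}=e^{-\frac K2(a-1-\log a)}$ with the convex rate function $a\mapsto a-1-\log a\geq 0$.

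It then remains to show that the sum of these two bounds is at most the target $(\sqrt e/w)^K=e^{-\frac K2(\log w^2-1)}$. Factoring the target out, this is equivalent to the single-variable inequality
$$\big(w^2 e^{-w^2/2}\big)^{K} + e^{-\frac K2 w^{-2}} \leq 1,$$
which I would reduce to the boundary case $K=3$: both summands have the form $c^K$ with $0<c<1$ for every $w\geq\sqrt e$ (indeed $w^2 e^{-w^2/2}\leq e\cdot e^{-e/2}<1$ there, as $x\mapsto x e^{-x/2}$ is decreasing for $x\geq e$), hence each is decreasing in $K$ and the worst case over $K\geq 3$ is $K=3$. This monotonicity also explains the hypothesis $K\geq 3$: for $K\in\{1,2\}$ the left-hand side already exceeds one near $w=\sqrt e$. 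The remaining $K=3$ estimate can be closed by elementary calculus, bounding the lower-tail term below via $1-e^{-u}\geq u-u^2/2$ with $u=\tfrac32 w^{-2}$ and checking that the (super-exponentially small) upper-tail term stays beneath the resulting lower bound on $1-e^{-\frac32 w^{-2}}$ for all $w\geq\sqrt e$.

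The main obstacle is this final combination step rather than the tail bounds themselves. Crude splitting (e.g. asking each tail to be at most half the target) cannot succeed: as $w\to\infty$ the lower-tail bound is asymptotically equal to the full target, since their ratio $e^{-\frac K2 w^{-2}}\to 1$ from below, so all of the available slack must come from the super-exponential decay of the upper-tail bound in $w$. Obtaining the clean constant therefore forces one to use the exact Cramér rates from the optimized Chernoff bounds, not a looser surrogate, and to carry out the delicate verification that the two contributions sum to at most one uniformly over $w>\sqrt e$ and $K\geq 3$.
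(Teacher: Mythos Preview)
Your approach is sound and the Chernoff bounds you derive are correct, as is the reduction to $K=3$ via monotonicity; the residual one-variable inequality you isolate, $(w^2 e^{-w^2/2})^3 + e^{-3/(2w^2)}\leq 1$ for $w\geq\sqrt e$, does indeed hold and can be closed along the lines you sketch (after your lower bound on $1-e^{-3/(2w^2)}$, it suffices to check $w^8 e^{-3w^2/2}\leq \tfrac32-\tfrac{9}{8w^2}$, and both sides move monotonically in the right direction for $w^2\geq e>8/3$, so the boundary value $w=\sqrt e$ settles it).

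This is, however, a genuinely different route from the one the paper points to. The paper does not reproduce the argument but indicates that the proof in \cite{SmZaPa19} ``relies on the fact that we have closed form expressions for the law of $Q\sim\chi^2(K)$'': there one works directly with the explicit density $f_K(x)=\tfrac{1}{2^{K/2}\Gamma(K/2)}x^{K/2-1}e^{-x/2}$ and bounds the two tail integrals by hand, exploiting the exact incomplete-gamma structure rather than the moment generating function. The density-based argument is more bespoke but gives tighter intermediate control and makes the role of $K\geq3$ emerge from integration-by-parts/gamma recursions; your Chernoff route is more systematic and would generalize readily to other sub-exponential families, at the price of the somewhat delicate recombination step you correctly flag (the lower-tail Cram\'er bound alone is asymptotically tight against the target as $w\to\infty$, so nothing can be given away there). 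Either way the constant $\sqrt e$ and the threshold $K\geq3$ come out the same.
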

\begin{proof}
The proof relies on the fact that we have closed form expressions for the law of $Q\sim\chi^2(K)$; for details see \cite{SmZaPa19}.
\end{proof}

Proposition \ref{prop:Chi2Tail} shows that the probability $\mathbb{P}\big\{  \overline{ Kw^{-2} \leq  Q \leq Kw^2 }  \big\}$ decays at least exponentially with respect to $K$, provided $w\geq\sqrt{e}$ and $K\geq3$. 
Then for any $v\in \mathbb{R}^N$, the relation
\begin{equation}\label{eq:controlError}
 w^{-1}\|v\|_\Sigma \leq \| \Phi v \|_2 \leq w \|v\|_\Sigma ,
\end{equation}
holds with a probability greater than $1-(\sqrt{e}/w)^K$. We highlight that this is a non-asymptotic result in the sense that it holds for finite $K$, not relying on the law of large numbers; for further details on those types of non-asymptotic concentration inequalities we refer to \cite{Ver18}.
Moreover, note that by accepting a larger $w$ and thus a larger deviation of the estimator from $\| v \|_{\Sigma}$ fewer samples are required to ensure that the probability of failure $(\sqrt{e}/w)^K$ is small. For instance with $w=2$ and $K=48$, relation \eqref{eq:controlError} holds with a probability larger than $0.9998$, while for $w=4$ we only need  $K=11$ samples to obtain the same probability (see Table \ref{tab:concentration_set}).

Next, by combining Proposition \ref{prop:Chi2Tail} with a union bound argument we can quantify the probability that relation \eqref{eq:controlError} holds simultaneously for any vector in a finite set $\mathcal{M}\subset \mathbb{R}^N$.

\begin{corollary}\label{prop:estimate many vectors}
Given a finite collection of vectors $\mathcal{M}=\{v_1,v_2,\hdots, v_{\# \mathcal{M}}\}\subset \black{\mathbb{R}^N}$ and a failure probability $0<\delta<1$. Then, for any $w > \sqrt{e}$ and 
\begin{equation}\label{eq:ConditionK_FiniteSet}
  K\geq \min \left\lbrace \frac{\log( \# \mathcal{M}) + \log(\delta^{-1})}{\log(w/\sqrt{e})} , \enspace 3 \right \rbrace
\end{equation}
we have
\begin{equation}\label{eq:estimate many vectors}
 \mathbb{P}\Big\{ w^{-1}\|v\|_\Sigma \leq \| \Phi v \|_2  \leq w  \|v\|_\Sigma ~,~\forall v\in\mathcal{M} \Big\} \geq 1 - \delta. 
\end{equation}
\end{corollary}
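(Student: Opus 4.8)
The plan is to reduce the simultaneous statement over the whole collection $\mathcal{M}$ to the single-vector estimate already established, and then pay a union-bound price proportional to $\#\mathcal{M}$. Concretely, I would fix one vector $v_j\in\mathcal{M}$ first. The discussion leading to \eqref{eq:controlError}, combined with Proposition \ref{prop:Chi2Tail}, already shows that the ``bad'' event $B_j := \{\,\text{relation \eqref{eq:controlError} fails for } v=v_j\,\}$ satisfies $\mathbb{P}\{B_j\}\leq(\sqrt{e}/w)^K$, which is valid precisely because $w>\sqrt{e}$ and $K\geq3$. The key feature inherited from the single-vector analysis is that this per-vector failure probability depends neither on $v_j$ nor on the ambient dimension $N$, only on $w$ and $K$.

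The second step is the union bound. The event that \eqref{eq:controlError} fails for at least one vector of $\mathcal{M}$ is exactly $\bigcup_{j=1}^{\#\mathcal{M}} B_j$, and subadditivity of probability measures gives
$$
 \mathbb{P}\Big\{ \textstyle\bigcup_{j=1}^{\#\mathcal{M}} B_j \Big\}
 \;\leq\; \sum_{j=1}^{\#\mathcal{M}} \mathbb{P}\{B_j\}
 \;\leq\; \#\mathcal{M}\,\Big(\tfrac{\sqrt{e}}{w}\Big)^{K}.
$$
Passing to the complementary event, the simultaneous bound claimed in the corollary therefore holds with probability at least $1-\#\mathcal{M}\,(\sqrt{e}/w)^{K}$, so it suffices to force $\#\mathcal{M}\,(\sqrt{e}/w)^{K}\leq\delta$.

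The third step converts this last requirement into the stated condition on $K$, and is pure algebra. Since $w>\sqrt{e}$ we have $\log(w/\sqrt{e})>0$, and the inequality $\#\mathcal{M}\,(\sqrt{e}/w)^{K}\leq\delta$ is, after taking logarithms, equivalent to $K\log(w/\sqrt{e})\geq\log(\#\mathcal{M})+\log(\delta^{-1})$, i.e. to the lower bound on $K$ appearing in \eqref{eq:ConditionK_FiniteSet}. Together with the constraint $K\geq3$ inherited from Proposition \ref{prop:Chi2Tail}, this yields the hypothesis of the corollary.

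I do not expect a genuine obstacle here; the argument is a textbook union bound, and the only care required is bookkeeping. Two points deserve attention. First, the requirement $K\geq3$ from Proposition \ref{prop:Chi2Tail} must be retained \emph{in addition} to the logarithmic bound, so that $K$ is asked to dominate both quantities simultaneously (the larger of the two). Second, when exponentiating/taking logarithms one must track that $\sqrt{e}/w<1$, so that $\log(\sqrt{e}/w)<0$ and the inequality direction reverses; this sign flip is exactly what produces the positive denominator $\log(w/\sqrt{e})$ in \eqref{eq:ConditionK_FiniteSet}. Beyond these, nothing subtle remains.
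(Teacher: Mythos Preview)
Your proposal is correct and follows exactly the approach the paper indicates: the paper does not spell out a proof but simply says the corollary follows ``by combining Proposition~\ref{prop:Chi2Tail} with a union bound argument,'' which is precisely what you do. Your remark that $K$ must dominate \emph{both} the logarithmic quantity and $3$ is well taken; the ``$\min$'' in \eqref{eq:ConditionK_FiniteSet} appears to be a typo for ``$\max$,'' and your reading is the correct one.
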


Table \ref{tab:concentration_set} gives numerical values of $K$ that satisfy \eqref{eq:ConditionK_FiniteSet} depending on $\delta$, $w$ and $\#\mathcal{M}$. For example with $\delta=10^{-4}$ and $w=10$, estimating simultaneously the norm of $10^{9}$ vectors requires only $K=17$ samples. Again, we highlight that this result is independent on the dimension $N$ of the vectors to be estimated. Moreover, Condition \eqref{eq:ConditionK_FiniteSet} allows reducing the number of samples $K$ by increasing $w$\footnote{Note that this is in contrast to the celebrated Johnson-Lindenstrauss lemma \cite{dasgupta1999elementary,johnson1984extensions}.The latter ensures that for any $0<\varepsilon<1$ and any finite set $\mathcal{M}\subset\mathbb{R}^N$ that contains the zero vector, the condition $K\geq 8\varepsilon^{-2}\log(\#\mathcal{M})$ ensures the existence of a linear map $\Phi: \mathbb{R}^{N} \rightarrow \mathbb{R}^{K}$  such that
$ \sqrt{1-\varepsilon} \|v\|_2 \leq \| \Phi v \|_2 \leq \sqrt{1+\varepsilon} \| v \|_2 $ for all $v\in\mathcal{M}$. Increasing $\varepsilon$ does therefore not allow reducing the number of samples $K$; for a more detailed discussion see \cite{SmZaPa19}.}. Note that the computational complexity of the estimator $\| \Phi v \|_2$ and thus the a posteriori error estimator we propose in this paper depends on $K$ (see section \ref{sec4}). By relying on Corollary \ref{prop:estimate many vectors} the user may thus choose between a less accurate but also less expensive estimator (higher $w$, lower $K$) or a more accurate but also more expensive estimator (lower $w$, higher $K$).

\begin{table}[t]
  \centering
  \footnotesize
  % Just run this line on Matlab:
  % W=[2,4,10];delta=1e-4;M=1e9; arrayfun( @(w) ceil( ( log(M)+log(1/delta) ) / (log(w)-1/2) ), W)
  
  \begin{tabular}{| l | c c c | c | l | c c c |} 
      \cline{2-4}\cline{7-9}
      \multicolumn{1}{c|}{$\delta=10^{-2}$}& $w=2$ & $w=4$ & $w=10$ &
      \multicolumn{1}{c}{$\quad$}& 
      \multicolumn{1}{c|}{$\delta=10^{-4}$}& $w=2$ & $w=4$ & $w=10$ 
      \\  
      \cline{1-4}\cline{6-9}
      $\#\mathcal{M}=10^0$   & 24 & 6  &  3  &     & $\#\mathcal{M}=10^0$ & 48  & 11 & 6  \\ 
      $\#\mathcal{M}=10^3$   & 60 & 13 &  7  &     & $\#\mathcal{M}=10^3$ & 84  & 19 & 9  \\ 
      $\#\mathcal{M}=10^6$   & 96 & 21 & 11  &     & $\#\mathcal{M}=10^6$ & 120 & 26 & 13  \\ 
      $\#\mathcal{M}=10^9$   & 132& 29 & 15  &     & $\#\mathcal{M}=10^9$ & 155 & 34 & 17  \\ 
      \cline{1-4}\cline{6-9}
  \end{tabular} 
  
 \caption{Minimal value of $K$ for which Condition \ref{eq:ConditionK_FiniteSet} is satisfied.
}
\label{tab:concentration_set}
\vspace{-15pt}
\end{table}

\begin{remark}[Drawing Gaussian vectors]
In actual practice we can draw efficiently from $Z\sim\mathcal{N}(0,\Sigma)$ using \emph{any} factorization of the covariance matrix of the form of $\Sigma=U^{T}U$, for instance a Cholesky decomposition or a symmetric matrix square root.
It is then sufficient to draw a standard Gaussian vector $\widehat{Z}$ and to compute the matrix-vector product $Z=U^{T}\widehat{Z}$. As pointed-out in \cite[Remark 2.9]{BalNou18}, one can take advantage of a potential block structure of $\Sigma$ to build a (non-square) factorization $U$ with a negligible computational cost.
\end{remark}

\subsection{Randomized a posteriori error estimator}\label{subsec:estimate many vectors}

We apply the methodology described in the previous subsection to derive residual-based randomized a posteriori error estimators. Recall the definition of the random map $\Phi: \R^{N} \rightarrow \R^{K}$, $\Phi_{(i,:)}=(1/\sqrt{K})Z_i^T$, $i=1,\hdots,K$, where $(i,:)$ denotes the $i$-th row and $Z_1,\hdots Z_K$ are independent copies of $Z\sim\mathcal{N}(0,\Sigma)$. We may then define the estimators
\begin{equation}\label{defeq:error est}
 \Delta(\Mu)  
 = \| \Phi \big( u(\Mu)-\widetilde u(\Mu) \big) \|_2 
 = \sqrt{ \frac{1}{K} \sum_{k=1}^{K} \Big(Z_{i}^{T} \big(u(\Mu)-\widetilde u(\Mu) \big) \Big)^{2} } 
 \qquad\text{and}\qquad
 \Delta = \sqrt{ \frac{1}{\#\mathcal{P}} \sum_{\Mu \in\mathcal{P}}  \Delta(\Mu)^{2} }
\end{equation}
of the errors $\|u(\Mu)-\widetilde u(\Mu)\|_\Sigma $ and $\sqrt{\frac{1}{\#\mathcal{P}}\sum_{\Mu\in\mathcal{P}}\| u(\Mu)-\widetilde u(\Mu)\|_\Sigma^2}$, respectively.
As both $\Delta(\Mu)$ and $\Delta$ require the high-dimensional solution $u(\Mu)$ of problem \eqref{eq:AUB}, these estimators are computationally infeasible in practice. By introducing the residual
\begin{equation}\label{eq:PrimalResidual}
 r(\Mu)= f(\Mu) - A(\Mu)\widetilde u(\Mu),
\end{equation}
associated with Problem \ref{eq:AUB} and by exploiting the \emph{error residual relationship} we rewrite the terms $Z_{i}^{T} (u(\Mu)- \widetilde u(\Mu))$ in \eqref{defeq:error est} as 
\begin{equation}\label{eq:err res relationship}
 Z_{i}^{T} (u(\Mu) - \widetilde u(\Mu)) = Z_{i}^{T} A(\Mu)^{-1} r(\Mu) = (A(\Mu)^{-T}Z_{i})^{T}r(\Mu) = Y_i(\Mu)^{T}r(\Mu),
\end{equation}
where $Y_{i}(\Mu) \in \mathbb{R}^N$ are the solutions to the \emph{random dual problems} 
\begin{equation}\label{eq:randomDualProblem}
 A(\Mu)^T Y_i(\Mu) = Z_i, \quad 1\leq i \leq K.
\end{equation}
Because the right hand side in \eqref{eq:randomDualProblem} is random, the solutions $Y_1(\Mu),\hdots,Y_K(\Mu)$ are random vectors. 
Using relation \eqref{eq:err res relationship} the error estimators $\Delta(\Mu)$ and $\Delta$ \eqref{defeq:error est} can be rewritten as
\begin{equation}
 \Delta(\Mu)  = \sqrt{ \frac{1}{K} \sum_{k=1}^{K} \Big( Y_i(\Mu)^T r(\Mu) \Big)^2}
 \qquad\text{and}\qquad
 \Delta = \sqrt{ \frac{1}{\#\mathcal{P}} \sum_{\Mu \in\mathcal{P}}  \Delta(\Mu)^{2} }.
 \label{eq:DualTrick}
\end{equation}
This shows that $\Delta(\Mu)$ can be computed by applying $K$ linear forms to the residual $r(\Mu)$.
In that sense, $\Delta(\Mu)$ can be interpreted as an a posteriori error estimator. However, computing the solutions to \eqref{eq:randomDualProblem} is in general as expensive as solving the primal problem \eqref{eq:AUB}. In the next section, we show how to approximate the dual solutions $Y_1(\Mu),\hdots,Y_K(\Mu)$ via the PGD in order to obtain a fast-to-evaluate a posteriori error estimator.
Before that, a direct application of Corollary \ref{prop:estimate many vectors} with $\mathcal{M}=\{u(\Mu)-\widetilde u(\Mu) : \Mu\in\mathcal{P}\}$ gives the following control of the quality of $\Delta(\Mu)$ and $\Delta$.
\begin{corollary}\label{coro:truth est S}
 Given $0 < \delta < 1$ and $w > \sqrt{e}$, the condition
 \begin{equation}\label{coro:truth est Condition S}
   K\geq \min \left\lbrace \frac{\log( \# \mathcal{P}) + \log(\delta^{-1})}{\log(w/\sqrt{e})} , \enspace 3 \right \rbrace,
 \end{equation}
 is sufficient to ensure
 $$
  \mathbb{P}\Big\{ w^{-1} \Delta(\Mu) \leq \|u(\Mu)-\widetilde u(\Mu) \|_\Sigma \leq w \Delta(\Mu) ~,~\forall \Mu \in \mathcal{P} \Big\} \geq 1 - \delta,
 $$
 and therefore
 $$
  \mathbb{P}\Big\{ w^{-1} \Delta \leq \sqrt{\frac{1}{\#\mathcal{P}}\sum_{\Mu\in\mathcal{P}}\| u(\Mu)-\widetilde u(\Mu)\|_\Sigma^2} \leq w \Delta \Big\} \geq 1 - \delta.
 $$
\end{corollary}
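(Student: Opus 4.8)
The plan is to obtain this statement as an essentially immediate consequence of \cref{prop:estimate many vectors}, specialized to the finite set
\begin{equation*}
 \mathcal{M} = \{ u(\Mu) - \widetilde u(\Mu) : \Mu \in \mathcal{P} \} \subset \R^N.
\end{equation*}
The crucial observation is that, by the very definition of $\Delta(\Mu)$ in \eqref{defeq:error est}, we have $\Delta(\Mu) = \|\Phi( u(\Mu) - \widetilde u(\Mu))\|_2$, so the quantity whose concentration is controlled by \cref{prop:estimate many vectors} is exactly $\Delta(\Mu)$. Since $\#\mathcal{M} \leq \#\mathcal{P}$, the hypothesis \eqref{coro:truth est Condition S} on $K$ implies the corresponding hypothesis \eqref{eq:ConditionK_FiniteSet} of \cref{prop:estimate many vectors} (the logarithm is monotone, so $\log(\#\mathcal{M}) \leq \log(\#\mathcal{P})$), and hence the conclusion \eqref{eq:estimate many vectors} holds for this $\mathcal{M}$ with probability at least $1-\delta$.

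For the first displayed inequality, I would simply rewrite the event in \eqref{eq:estimate many vectors}. For each fixed $v$ with $\|v\|_\Sigma \neq 0$ and each $w \geq 1$, the two-sided bound $w^{-1}\|v\|_\Sigma \leq \|\Phi v\|_2 \leq w\|v\|_\Sigma$ is algebraically equivalent to $w^{-1}\|\Phi v\|_2 \leq \|v\|_\Sigma \leq w\|\Phi v\|_2$ (dividing the left inequality by $w$ and multiplying the right by $w^{-1}$); the semi-norm-zero case is degenerate and handled trivially. Substituting $v = u(\Mu)-\widetilde u(\Mu)$ and $\|\Phi v\|_2 = \Delta(\Mu)$ turns the simultaneous event of \cref{prop:estimate many vectors} into exactly the event $\{ w^{-1}\Delta(\Mu) \leq \|u(\Mu)-\widetilde u(\Mu)\|_\Sigma \leq w\Delta(\Mu),\ \forall \Mu\in\mathcal{P}\}$, which therefore also has probability at least $1-\delta$.

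For the second displayed inequality, I would work on the same good event of probability at least $1-\delta$ rather than re-invoking the probabilistic statement, which is the only point requiring care: the bounds must hold \emph{simultaneously} for all $\Mu$, and this simultaneity is precisely what the union-bound argument behind \cref{prop:estimate many vectors} delivers. Conditioned on this event, for every $\Mu$ the nonnegative quantities satisfy $w^{-2}\Delta(\Mu)^2 \leq \|u(\Mu)-\widetilde u(\Mu)\|_\Sigma^2 \leq w^2\Delta(\Mu)^2$ upon squaring; averaging these inequalities over $\Mu\in\mathcal{P}$ and using $\Delta^2 = \frac{1}{\#\mathcal{P}}\sum_{\Mu\in\mathcal{P}}\Delta(\Mu)^2$ yields $w^{-2}\Delta^2 \leq \frac{1}{\#\mathcal{P}}\sum_{\Mu\in\mathcal{P}}\|u(\Mu)-\widetilde u(\Mu)\|_\Sigma^2 \leq w^2\Delta^2$, and taking square roots gives the claimed two-sided bound on the aggregated error. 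Since this chain holds deterministically on an event of probability at least $1-\delta$, the second probabilistic statement follows. I expect no genuine obstacle here; the only thing to emphasize in the write-up is that passing from the per-$\Mu$ control to the averaged quantity $\Delta$ relies on having a single common event, so that summation over $\mathcal{P}$ is legitimate.
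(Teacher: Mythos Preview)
Your proposal is correct and follows exactly the approach the paper indicates: the paper states that the corollary is a ``direct application of Corollary~\ref{prop:estimate many vectors} with $\mathcal{M}=\{u(\Mu)-\widetilde u(\Mu) : \Mu\in\mathcal{P}\}$'' and provides no further detail. You have simply filled in the routine steps (the algebraic equivalence of the two-sided bounds, and the passage to the RMS quantity by squaring, averaging, and taking roots on the common good event), all of which are sound.
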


It is important to note that Condition \eqref{coro:truth est Condition S} depends only on the cardinality of $\mathcal{P}$. This means that $K$ can be determined only knowing the number of parameters for which we need to estimate the error. Due to the logarithmic dependence of $K$ on $\#\mathcal{P}$ Corollary \ref{coro:truth est S} tolerates also (very) pessimistic estimates of $\#\mathcal{P}$.

We finish this section by noting that, using the same \emph{error residual relationship} trick \eqref{defeq:error est}, the relative errors $\|u(\Mu)- \widetilde u(\Mu)\|_\Sigma/\|u(\Mu)\|_\Sigma$ or $\sqrt{\frac{1}{\#\mathcal{P}}\sum_{\Mu\in\mathcal{P}}\| u(\Mu)-\widetilde u(\Mu)\|_\Sigma^2}/ \sqrt{\frac{1}{\#\mathcal{P}}\sum_{\Mu\in\mathcal{P}}\| u(\Mu)\|_\Sigma^2}$ can be estimated, respectively, using
\begin{equation}\label{eq:rel_err_est}
 \Delta^\text{rel}(\Mu) = \sqrt{\frac{ \frac{1}{K}\sum_{i=1}^K \big( Y_i(\Mu)^T r(\Mu) \big)^2 }{\frac{1}{K}\sum_{i=1}^K \big( Y_i(\Mu)^T f(\Mu) \big)^2 } } 
 \qquad\text{and}\qquad
 \Delta^\text{rel} = \sqrt{ \frac{\frac{1}{K\#\mathcal{P}} \sum_{\Mu \in \mathcal{P}} \sum_{k=1}^{K} \big(Y_i(\Mu)^T r(\Mu) \big)^{2}}{ \frac{1}{K\#\mathcal{P}} \sum_{\Mu \in \mathcal{P}} \sum_{k=1}^{K} \big(Y_i(\Mu)^T f(\Mu) \big)^{2} }}.
\end{equation}
We then obtain a slightly modified version of Corollary \ref{coro:truth est S}; the proof can be found in the appendix.
\begin{corollary}\label{coro:truth est S rel}
 Given $0 < \delta < 1$ and $w > e$, the condition
 \begin{equation}\label{coro:truth est Condition S rel}
   K\geq \min \left\lbrace \frac{2\log( 2 \,\#\mathcal{P}) + 2\log(\delta^{-1})}{\log(w/e)} , \enspace 3 \right \rbrace,
 \end{equation}
 is sufficient to ensure
 $$
  \mathbb{P}\Big\{ w^{-1}  \Delta^\text{rel}(\Mu) \leq \frac{\|u(\Mu)-\widetilde u(\Mu) \|_\Sigma}{\|u(\Mu)\|_{\Sigma}} \leq w  \Delta^\text{rel}(\Mu) ~,~\forall \Mu \in \mathcal{P} \Big\} \geq 1 - \delta. 
 $$
 and also 
 $$
  \mathbb{P}\Big\{ w^{-1} \Delta^\text{rel} \leq \frac{\sqrt{\frac{1}{\#\mathcal{P}}\sum_{\Mu\in\mathcal{P}}\| u(\Mu)-\widetilde u(\Mu)\|_\Sigma^2}}{\sqrt{\frac{1}{\#\mathcal{P}}\sum_{\Mu\in\mathcal{P}}\| u(\Mu)\|_\Sigma^2}} \leq w \Delta^\text{rel} \Big\} \geq 1 - \delta.
 $$
\end{corollary}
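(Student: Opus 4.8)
The plan is to reduce the relative-error statement to the already-established simultaneous concentration result of Corollary~\ref{prop:estimate many vectors}, applied to a \emph{doubled} collection of vectors and with a \emph{rescaled} deviation parameter $\sqrt{w}$. First I would rewrite $\Delta^\text{rel}(\Mu)$ purely in terms of the Gaussian map $\Phi$. The error-residual relationship \eqref{eq:err res relationship} already gives $Y_i(\Mu)^T r(\Mu)=Z_i^T(u(\Mu)-\widetilde u(\Mu))$; the analogous identity for the denominator is $Y_i(\Mu)^T f(\Mu)=(A(\Mu)^{-T}Z_i)^T f(\Mu)=Z_i^T A(\Mu)^{-1}f(\Mu)=Z_i^T u(\Mu)$, using $u(\Mu)=A(\Mu)^{-1}f(\Mu)$. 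Substituting both into \eqref{eq:rel_err_est} and recalling \eqref{eq:defPhi} yields the clean expressions
\begin{equation*}
 \Delta^\text{rel}(\Mu) = \frac{\|\Phi(u(\Mu)-\widetilde u(\Mu))\|_2}{\|\Phi u(\Mu)\|_2}, \qquad \Delta^\text{rel} = \sqrt{\frac{\frac{1}{\#\mathcal{P}}\sum_{\Mu\in\mathcal{P}}\|\Phi(u(\Mu)-\widetilde u(\Mu))\|_2^2}{\frac{1}{\#\mathcal{P}}\sum_{\Mu\in\mathcal{P}}\|\Phi u(\Mu)\|_2^2}}.
\end{equation*}
Thus the relative estimator is a ratio of two quantities, each of which the Gaussian map is designed to approximate.

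Next I would apply Corollary~\ref{prop:estimate many vectors} to the enlarged set $\mathcal{M}=\{u(\Mu)-\widetilde u(\Mu):\Mu\in\mathcal{P}\}\cup\{u(\Mu):\Mu\in\mathcal{P}\}$, which has cardinality $\#\mathcal{M}\leq 2\,\#\mathcal{P}$, but crucially with deviation parameter $\sqrt{w}$ in place of $w$. On the resulting event of probability at least $1-\delta$, the two-sided bound \eqref{eq:controlError} holds for every element of $\mathcal{M}$ simultaneously, so that for each $\Mu$ we control numerator and denominator at once:
\begin{equation*}
 \tfrac{1}{\sqrt{w}}\|u(\Mu)-\widetilde u(\Mu)\|_\Sigma \leq \|\Phi(u(\Mu)-\widetilde u(\Mu))\|_2 \leq \sqrt{w}\,\|u(\Mu)-\widetilde u(\Mu)\|_\Sigma,
\end{equation*}
together with the same chain with $u(\Mu)$ in place of the error. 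Forming the ratio for the pointwise estimator, the factor $\sqrt{w}$ from the numerator compounds with the factor $\sqrt{w}$ from the inverse denominator to give $w^{-1}\leq \Delta^\text{rel}(\Mu)\big/\big(\|u(\Mu)-\widetilde u(\Mu)\|_\Sigma/\|u(\Mu)\|_\Sigma\big)\leq w$, which rearranges to the first claim. For the aggregate estimator I would square the pointwise bounds, average them over $\Mu$ and divide by $\#\mathcal{P}$ for numerator and denominator separately; the ratio of these averages then controls $(\Delta^\text{rel})^2$ within $[w^{-2},w^{2}]$ times the squared RMS relative error, and a final square root delivers the second claim.

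Finally I would check that the condition on $K$ comes out as stated. Specializing \eqref{eq:ConditionK_FiniteSet} with $\#\mathcal{M}=2\,\#\mathcal{P}$ and deviation $\sqrt{w}$ requires $\sqrt{w}>\sqrt{e}$, i.e.\ $w>e$, and
\begin{equation*}
 K \geq \frac{\log(2\,\#\mathcal{P})+\log(\delta^{-1})}{\log(\sqrt{w}/\sqrt{e})} = \frac{2\log(2\,\#\mathcal{P})+2\log(\delta^{-1})}{\log(w/e)},
\end{equation*}
where I use $\log(\sqrt{w}/\sqrt{e})=\tfrac12\log(w/e)$; this is precisely \eqref{coro:truth est Condition S rel}, the inherited $\min\{\cdot,3\}$ structure carrying over unchanged. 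The only genuinely delicate point is the bookkeeping of deviation factors: one must pass to $\sqrt{w}$ rather than $w$ in the concentration step so that the numerator and denominator deviations multiply to $w$, and one must place the error vectors and the solution vectors on a \emph{single} good event (hence the cardinality $2\,\#\mathcal{P}$ and the factor $2$) rather than intersecting two separately controlled events, which would be wasteful and would not reproduce the stated constant.
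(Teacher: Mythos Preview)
Your proposal is correct and follows essentially the same route as the paper: rewrite $\Delta^\text{rel}(\Mu)$ as a ratio of $\|\Phi\cdot\|_2$-norms, control numerator and denominator simultaneously via the chi-squared concentration bound applied to the doubled collection of $2\,\#\mathcal{P}$ vectors with deviation parameter $\sqrt{w}$, and then let the two $\sqrt{w}$ factors combine into the stated $w$. The paper's proof differs only cosmetically, working first with parameter $w$ and Proposition~\ref{prop:Chi2Tail} to obtain a $w^2$-bound on the ratio, then substituting $\hat w=w^2$ at the end; your choice to pass to $\sqrt{w}$ up front and invoke Corollary~\ref{prop:estimate many vectors} as a black box is arguably cleaner. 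One minor quibble: your closing remark that intersecting two separately controlled events ``would not reproduce the stated constant'' is not quite right---splitting $\delta$ in half and applying Corollary~\ref{prop:estimate many vectors} twice to sets of size $\#\mathcal{P}$ yields exactly the same bound on $K$, since $\log(\#\mathcal{P})+\log(2\delta^{-1})=\log(2\,\#\mathcal{P})+\log(\delta^{-1})$---but this does not affect the validity of your argument.
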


\begin{remark}[F-distribution]\label{rmk:Fdistribution}
If we have $u(\Mu)^{T}\Sigma (u(\Mu)-\widetilde u(\Mu))=0$ the random variable $(\Delta^\text{rel}(\Mu)\|u(\Mu)\|_{\Sigma})^{2}/
\| u(\Mu)- \widetilde u(\Mu)\|_\Sigma^{2}$ follows an $F$-distribution with degrees of freedom $K$ and $K$. We may then use the cumulative distribution function (cdf) of the $F$-distribution and an union bound argument to get an even more accurate lower bound for $\mathbb{P}\{ w^{-1}  \Delta^\text{rel}(\Mu) \leq \frac{\|u(\Mu)-\widetilde u(\Mu) \|_\Sigma}{\|u(\Mu)\|_{\Sigma}} \leq w  \Delta^\text{rel}(\Mu) ~,~\forall \Mu \in \mathcal{P} \}$; for further details see section \ref{sec:f-distribution}. We remark that although in general we do not have $u(\Mu)^{T}\Sigma (u(\Mu)-\widetilde u(\Mu))=0$, often $u(\Mu)$ and the error are nearly $\Sigma$-orthogonal. As a consequence we observe in the numerical experiments in Section \ref{sec5} that the empirical probability density function (pdf) of $(\Delta^\text{rel}(\Mu)\|u(\Mu)\|_{\Sigma})/\| u(\Mu)- \widetilde u(\Mu)\|_\Sigma$ follows the square root of the pdf of the $F$ distribution pretty well. 
\end{remark}

\begin{remark}[Scalar-valued QoI]
 When estimating the error in scalar-valued QoIs of the form of $s(\Mu)=l^T u(\Mu)$, the covariance matrix is $\Sigma=l\,l^T$, see subsection \ref{subsec:3.1}. In that case the random vector $Z\sim\mathcal{N}(0,\Sigma)$ follows the same distribution as $X\,l$ where $X\sim\mathcal{N}(0,1)$ is a standard normal random variable (scalar). The random dual problem \eqref{eq:randomDualProblem} then becomes
 $
  A(\Mu)^TY_i(\Mu) = X_i\, l 
 $
 and the solution is $Y_i(\Mu) = X_i \, q(\Mu)$ where $q(\Mu)$ is the solution of the deterministic dual problem $A(\Mu)^T q(\Mu) = l$. Dual problems of this form are commonly encountered for estimating linear quantities of interest, see \cite{pierce2000adjoint} for a general presentation and \cite{Haa17,RoHuPa08,zahm2017projection,AmChHu10} for the application in reduced order modeling. 
\end{remark}

\section{Approximation of the random dual solutions via the PGD}\label{sec4}

In order to obtain a fast-to-evaluate a posteriori error estimator we employ the PGD to compute approximations of the solutions of the $K$ dual problems \eqref{eq:randomDualProblem}. Given $K$ independent realizations $Z_1,\hdots,Z_K$ of $Z\sim\mathcal{N}(0,\Sigma)$, we denote by $Y(\Mu)=[Y_1(\Mu),\hdots,Y_K(\Mu)] \in\R^{N\times K}$ the horizontal concatenation of the $K$ dual solutions \eqref{eq:randomDualProblem} so that $Y(\Mu)$ satisfies the parametrized matrix equation
$$
 A(\Mu)^TY(\Mu)=[Z_1,\hdots,Z_K].
$$
We use the PGD to approximate $Y(\Mu)$ by
\begin{equation}\label{eq:dual_PGD}
 \widetilde{Y}^{L}(\Mu_{1},\hdots ,\Mu_{p}) = \sum_{m=1}^{L} Y^{m}_{\mathcal{X}} \, Y^{m}_{1} (\Mu_1)\hdots Y^{m}_{p}(\Mu_p),
\end{equation}
where the matrices $Y^{m}_{\mathcal{X}}\in\R^{N\times K}$ and the functions $Y^{m}_{i}:\mathcal{P}_i \rightarrow\R$ are built in a greedy way. The PGD algorithm we employ here is essentially the same as the one we used for the primal approximation, except with minor modifications regarding the fact that $Y(\Mu)$ is a matrix and not a vector, see subsection \ref{subsec:comput_aspects}. In subsection \ref{subsec:intertwined} we present several stopping criteria for the dual greedy enrichment, including an adaptive algorithm which builds the primal and dual PGD approximation simultaneously. 
By replacing $Y_i(\Mu)$ by $\widetilde Y_i(\Mu)$, the $i$-th column of $\widetilde Y(\Mu)$, we define pointwise fast-to-evaluate a posteriori error estimators as
\begin{equation}\label{eq: def a post est online}
 \widetilde \Delta(\Mu) := \sqrt{ \frac{1}{K} \sum_{i=1}^{K} (\widetilde Y_{i}(\Mu)^{T} r(\Mu))^{2} } \qquad \text{and} \qquad
 \widetilde{\Delta}^\text{rel}(\Mu) :=  \sqrt{\frac{ \frac{1}{K}\sum_{i=1}^K \big(\widetilde Y_i(\Mu)^T r(\Mu) \big)^2}{\frac{1}{K}\sum_{i=1}^K \big(\widetilde Y_i(\Mu)^T f(\Mu) \big)^2 } },
\end{equation}
and fast-to-evaluate error estimators for the (relative) RMS error as 
\begin{equation}\label{eq: def a post est online RMS}
 \widetilde \Delta = \sqrt{\frac{1}{\#\mathcal{P}} \sum_{\Mu\in\mathcal{P}} \widetilde \Delta(\Mu)^{2} } 
 \qquad\text{and}\qquad
\widetilde{\Delta}^\text{rel} :=  \sqrt{\frac{ \frac{1}{K\#\mathcal{P}} \sum_{\Mu \in \mathcal{P}} \sum_{i=1}^K \big(\widetilde Y_i(\Mu)^T r(\Mu) \big)^2}{\frac{1}{K\#\mathcal{P}} \sum_{\Mu\in\mathcal{P}}\sum_{i=1}^K \big(\widetilde Y_i(\Mu)^T f(\Mu) \big)^2 } }.
\end{equation}
Similarly to Proposition 3.3 in \cite{SmZaPa19}, the following propositions bound the effectivity indices for the error estimators \eqref{eq: def a post est online} and \eqref{eq: def a post est online RMS}. This will be used in subsection \ref{subsec:intertwined} to steer the dual PGD approximation. Again, these results do not depend on the condition number of $A(\Mu)$ as it is the case for the residual error estimator \eqref{eq:RelativeResNorm}.

\begin{proposition}\label{prop:dualErrorMultiplicative}
 Let $0 < \delta < 1$, $w > \sqrt{e}$ and assume
 \begin{equation}\label{eq:assumption dualErrorMultiplicative}
   K\geq \min \left\lbrace \frac{\log( \#\mathcal{P}) + \log(\delta^{-1})}{\log(w/\sqrt{e})} , \enspace 3 \right \rbrace.
 \end{equation}
 Then the fast-to-evaluate estimators $\widetilde\Delta(\mu)$ and $\widetilde \Delta$ satisfy 
 \begin{equation}\label{eq:dualErrorMultiplicative}
  \mathbb{P}\Big\{ 
  (\alpha_{\infty} w)^{-1} \widetilde\Delta(\mu) 
  \leq \|u(\mu)-\widetilde u(\mu) \|_\Sigma 
  \leq (\alpha_{\infty} w) \,\widetilde\Delta(\mu),
  \quad \mu \in \mathcal{P}
  \Big\} \geq 1-\delta 
 \end{equation}
and
 \begin{equation}\label{eq:dualErrorMultiplicative2}
  \mathbb{P}\Big\{ 
  (\alpha_{2} w)^{-1} \widetilde\Delta
  \leq \sqrt{\frac{1}{\#\mathcal{P}}\sum_{\Mu\in\mathcal{P}}\| u(\Mu)-\widetilde u(\Mu)\|_\Sigma^2} 
  \leq (\alpha_{2} w) \,\widetilde\Delta
  \Big\} \geq 1-\delta, 
 \end{equation}
where
 \begin{equation}\label{eq:alpha}
  \alpha_{\infty} := \, \max_{\mu \in \mathcal{P}} \left( \max \left\{\frac{\Delta(\mu)}{\widetilde \Delta(\mu)} \,,\, \frac{\widetilde \Delta(\mu)}{\Delta(\mu)} \right\} \right) \geq 1 \qquad \text{ and } \qquad \alpha_{2}:= \,  \max \left\{\frac{\Delta}{\widetilde \Delta} \,,\, \frac{\widetilde \Delta}{\Delta} \right\} \geq 1.
 \end{equation}
\end{proposition}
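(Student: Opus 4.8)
The plan is to factor the estimation error into two pieces: a \emph{deterministic} multiplicative distortion introduced by replacing the exact random dual solutions $Y_i(\mu)$ with their PGD surrogates $\widetilde Y_i(\mu)$, and a \emph{probabilistic} concentration bound that already relates the idealized estimator $\Delta(\mu)$ to the true error. The factors $\alpha_\infty$ and $\alpha_2$ in \eqref{eq:alpha} are precisely designed to absorb the first piece, while Corollary~\ref{coro:truth est S} supplies the second. Thus the whole argument reduces to chaining a deterministic bound with a single already-proved probabilistic statement, without spending any further probability budget.

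First I would record the deterministic two-sided bound hidden in the definition of $\alpha_\infty$. For \emph{every} realization of the Gaussian maps $Z_1,\dots,Z_K$ and every $\mu\in\mathcal{P}$, the inner maximum in \eqref{eq:alpha} dominates both ratios $\Delta(\mu)/\widetilde\Delta(\mu)$ and $\widetilde\Delta(\mu)/\Delta(\mu)$, so that
$$
  \alpha_\infty^{-1}\,\widetilde\Delta(\mu) \;\leq\; \Delta(\mu) \;\leq\; \alpha_\infty\,\widetilde\Delta(\mu)
  \qquad\text{for all }\mu\in\mathcal{P}.
$$
This holds pointwise on the entire probability space and therefore carries no failure probability of its own. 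The analogous inequality $\alpha_2^{-1}\widetilde\Delta \leq \Delta \leq \alpha_2\widetilde\Delta$ for the RMS quantities follows identically from the definition of $\alpha_2$.

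Next I would invoke Corollary~\ref{coro:truth est S}. Since the hypothesis \eqref{eq:assumption dualErrorMultiplicative} on $K$ is exactly condition \eqref{coro:truth est Condition S}, the corollary guarantees that the event
$$
  E := \Big\{\, w^{-1}\Delta(\mu) \leq \|u(\mu)-\widetilde u(\mu)\|_\Sigma \leq w\,\Delta(\mu)\ \ \forall\mu\in\mathcal{P}\,\Big\}
$$
has probability at least $1-\delta$, and likewise for the corresponding RMS statement. The closing step is to combine the two bounds on $E$: using $\Delta(\mu)\geq\alpha_\infty^{-1}\widetilde\Delta(\mu)$ in the lower estimate and $\Delta(\mu)\leq\alpha_\infty\widetilde\Delta(\mu)$ in the upper estimate yields $(\alpha_\infty w)^{-1}\widetilde\Delta(\mu)\leq\|u(\mu)-\widetilde u(\mu)\|_\Sigma\leq(\alpha_\infty w)\widetilde\Delta(\mu)$ simultaneously over $\mu\in\mathcal{P}$. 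Hence the event in \eqref{eq:dualErrorMultiplicative} contains $E$ and inherits probability at least $1-\delta$; the RMS bound \eqref{eq:dualErrorMultiplicative2} follows the same way with $\alpha_2$ in place of $\alpha_\infty$.

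There is no serious obstacle here. The only subtlety worth flagging is conceptual rather than technical: although $\alpha_\infty$ and $\alpha_2$ are themselves random, since they depend on the drawn maps, they enter the argument \emph{solely} through the deterministic ratio inequalities above. Consequently no extra union bound and no additional loss in the probability is incurred, which is exactly why the condition on $K$ is unchanged from Corollary~\ref{coro:truth est S}. I would simply need to take care that the chaining is written so that a single event $E$ certifies all $\mu\in\mathcal{P}$ at once, matching the ``$\forall\mu$'' form of the conclusion.
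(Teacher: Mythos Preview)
Your proposal is correct and follows essentially the same route as the paper: invoke Corollary~\ref{coro:truth est S} to obtain the high-probability two-sided bound $w^{-1}\Delta(\mu)\leq\|u(\mu)-\widetilde u(\mu)\|_\Sigma\leq w\Delta(\mu)$ uniformly in $\mu\in\mathcal{P}$, then chain with the deterministic ratio bound $\alpha_\infty^{-1}\widetilde\Delta(\mu)\leq\Delta(\mu)\leq\alpha_\infty\widetilde\Delta(\mu)$ coming straight from the definition of $\alpha_\infty$. Your explicit remark that $\alpha_\infty,\alpha_2$ are random but incur no additional probability loss is a welcome clarification that the paper leaves implicit.
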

\begin{proof}
The proof follows exactly the proof of Proposition 3.3 in \cite{SmZaPa19} and we include it in the appendix for the sake of completeness.
\end{proof}

\begin{proposition}\label{prop:dualErrorMultiplicativerel}
 Let $0 < \delta < 1$, $w > e$ and assume
 \begin{equation}\label{eq:assumption dualErrorMultiplicativerel}
   K\geq \min \left\lbrace \frac{2\log(  2\,\#\mathcal{P}) + 2\log(\delta^{-1})}{\log(w/e)} , \enspace 3 \right \rbrace,
 \end{equation}
 Then the fast-to-evaluate estimators $\widetilde\Delta^\text{rel}(\Mu)$ and $\widetilde \Delta^\text{rel}$ satisfy
 \begin{equation}\label{eq:dualErrorMultiplicativerel}
  \mathbb{P}\Big\{ 
  (\alpha_{\infty}^\text{rel} w)^{-1} \widetilde\Delta^\text{rel}(\Mu) 
  \leq \frac{\|u(\Mu)-\widetilde u(\Mu) \|_\Sigma }{\|u(\Mu)\|_\Sigma }
  \leq (\alpha_{\infty}^\text{rel} w) \,\widetilde\Delta^\text{rel}(\Mu),
  \quad \Mu \in \mathcal{P}
  \Big\} \geq 1-\delta 
 \end{equation}
and 
 \begin{equation}\label{eq:dualErrorMultiplicativerel2}
  \mathbb{P}\Big\{ 
  (\alpha_{2}^\text{rel} w)^{-1} \widetilde\Delta^\text{rel}
  \leq \frac{\sqrt{\frac{1}{\#\mathcal{P}}\sum_{\Mu\in\mathcal{P}}\| u(\Mu)-\widetilde u(\Mu)\|_\Sigma^2}}{\sqrt{\frac{1}{\#\mathcal{P}}\sum_{\Mu\in\mathcal{P}}\| u(\Mu)\|_\Sigma^2}}
  \leq (\alpha_{2}^\text{rel} w) \,\widetilde\Delta^\text{rel}
  \Big\} \geq 1-\delta, 
 \end{equation}
where
 \begin{equation}\label{eq:alpharel}
  \alpha_{\infty}^\text{rel} := \, \max_{\Mu \in \mathcal{P}} \left( \max \left\{\frac{\Delta^\text{rel}(\Mu)}{\widetilde \Delta^\text{rel}(\Mu)} \,,\, \frac{\widetilde \Delta^\text{rel}(\Mu)}{\Delta^\text{rel}(\Mu)} \right\} \right) \geq 1 \qquad \text{ and } \qquad \alpha_{2}^\text{rel}:= \,  \max \left\{\frac{\Delta^\text{rel}}{\widetilde \Delta^\text{rel}} \,,\, \frac{\widetilde \Delta^\text{rel}}{\Delta^\text{rel}} \right\} \geq 1.
 \end{equation}
\end{proposition}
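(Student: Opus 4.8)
The plan is to obtain \Cref{prop:dualErrorMultiplicativerel} by piggybacking on the relative concentration result \Cref{coro:truth est S rel}, in exactly the same way that \Cref{prop:dualErrorMultiplicative} builds on \Cref{coro:truth est S}. The only structural difference from the exact-dual setting is that the solutions $Y_i(\Mu)$ have been replaced by their PGD surrogates $\widetilde Y_i(\Mu)$; the two resulting estimators $\Delta^\text{rel}(\Mu)$ and $\widetilde\Delta^\text{rel}(\Mu)$ are linked \emph{a priori} by the quantities $\alpha_\infty^\text{rel}$ and $\alpha_2^\text{rel}$ of \eqref{eq:alpharel}, and the whole argument amounts to inserting this deterministic link into the high-probability bracketing provided by \Cref{coro:truth est S rel}.

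First I would note that the assumption \eqref{eq:assumption dualErrorMultiplicativerel} on $K$ coincides verbatim with the hypothesis \eqref{coro:truth est Condition S rel} of \Cref{coro:truth est S rel}. Hence, with probability at least $1-\delta$, the exact-dual estimators bracket the corresponding true relative errors within the factor $w$, simultaneously for all parameters; in particular the event
\begin{equation*}
 w^{-1}\Delta^\text{rel}(\Mu) \leq \frac{\|u(\Mu)-\widetilde u(\Mu)\|_\Sigma}{\|u(\Mu)\|_\Sigma} \leq w\,\Delta^\text{rel}(\Mu), \qquad \forall\,\Mu\in\mathcal{P},
\end{equation*}
holds with probability at least $1-\delta$, and the same is true for its RMS analogue with $\Delta^\text{rel}$ in place of $\Delta^\text{rel}(\Mu)$.

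Next I would invoke the definition \eqref{eq:alpharel}, which, for \emph{every} realization of $\Phi$ (that is, surely, not merely with high probability), yields the two-sided comparison
\begin{equation*}
 (\alpha_\infty^\text{rel})^{-1}\,\widetilde\Delta^\text{rel}(\Mu) \leq \Delta^\text{rel}(\Mu) \leq \alpha_\infty^\text{rel}\,\widetilde\Delta^\text{rel}(\Mu), \qquad \forall\,\Mu\in\mathcal{P},
\end{equation*}
and the analogous sure bound $(\alpha_2^\text{rel})^{-1}\widetilde\Delta^\text{rel} \leq \Delta^\text{rel} \leq \alpha_2^\text{rel}\widetilde\Delta^\text{rel}$ for the RMS quantities. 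Chaining this comparison with the previous display replaces $\Delta^\text{rel}$ by $\widetilde\Delta^\text{rel}$, turning the factor $w$ into $\alpha_\infty^\text{rel}w$ (respectively $\alpha_2^\text{rel}w$). Concretely, the upper bound becomes $\|u(\Mu)-\widetilde u(\Mu)\|_\Sigma/\|u(\Mu)\|_\Sigma \leq w\,\Delta^\text{rel}(\Mu) \leq (\alpha_\infty^\text{rel}w)\,\widetilde\Delta^\text{rel}(\Mu)$ and symmetrically for the lower bound, giving precisely \eqref{eq:dualErrorMultiplicativerel} and \eqref{eq:dualErrorMultiplicativerel2}. Thus the event of \Cref{coro:truth est S rel} is contained, outcome by outcome, in the event of the proposition, so its probability lower bound $1-\delta$ transfers directly.

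I do not expect a genuine obstacle here beyond careful bookkeeping of the two directions of each inequality; the only point deserving attention — and the only place the reasoning might appear circular — is that $\alpha_\infty^\text{rel}$ and $\alpha_2^\text{rel}$ are themselves random, depending on $\Phi$ through $\Delta^\text{rel}$ and $\widetilde\Delta^\text{rel}$. The argument is nevertheless sound because the comparison encoded by \eqref{eq:alpharel} holds identically on each sample point, so the set inclusion of the two events is pointwise in the sample space and no further union bound or probability budget is consumed. Since this mirrors the proof of \Cref{prop:dualErrorMultiplicative} with \Cref{coro:truth est S rel} substituted for its absolute counterpart, I would simply record the relative version and defer the shared details to the appendix.
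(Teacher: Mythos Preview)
Your proposal is correct and follows exactly the paper's approach: the paper states that \Cref{prop:dualErrorMultiplicativerel} ``can be proved completely analogously to Proposition~\ref{prop:dualErrorMultiplicative},'' and the appendix proof of the latter does precisely what you describe---invoke the concentration corollary (here \Cref{coro:truth est S rel} in place of \Cref{coro:truth est S}) and then chain with the deterministic two-sided bound encoded in the definition of $\alpha_\infty^\text{rel}$ (resp.\ $\alpha_2^\text{rel}$). Your observation that the randomness of $\alpha_\infty^\text{rel}$ is harmless because the comparison holds pointwise in the sample space is a worthwhile clarification that the paper leaves implicit.
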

\begin{proof}
Can be proved completely analogously to Propostion \ref{prop:dualErrorMultiplicative}.
\end{proof}

\subsection{PGD algorithm for the random dual solution}\label{subsec:comput_aspects}

To construct the PGD approximation $\widetilde{Y}^{L}(\Mu)$ \eqref{eq:dual_PGD}, we employ a pure greedy algorithm. All terms in \eqref{eq:dual_PGD} are constructed one after the other: after $L-1$ iterations, the rank-one correction $Y_{\mathcal{X}}^L Y_{1}^L(\Mu_1)  \cdots  Y_{p}^L(\Mu_p)$ is computed as the solution to 
\begin{equation}\label{eq:MinResPGD_4dual}
\min_{Y_{\mathcal{X}}\in\R^{N\times K}} \,
 \min_{Y_{1}:\mathcal{P}_1\rightarrow\R} 
 \cdots
 \min_{Y_{p}:\mathcal{P}_p\rightarrow\R} 
 ~\sum_{i=1}^K\sum_{\Mu\in\mathcal{P}} \|A(\Mu)^T\Big(\widetilde{Y}^{L-1}_i(\Mu) + Y_{\mathcal{X},i} Y_{1}(\Mu_1)  \cdots  Y_{p}(\Mu_p)\Big) - Z_i\|_{2}^2,
\end{equation}
where $\widetilde{Y}^{L-1}_i(\Mu)$ and $Y_{\mathcal{X},i}$ denote the $i$-th columns of the matrices $\widetilde{Y}^{L-1}_i(\Mu)$ and $Y_{\mathcal{X}}$ respectively. 
Formulation \eqref{eq:MinResPGD_4dual} corresponds to the \emph{Minimal residual PGD} as it consists in minimizing the sum of the $K$ residuals associated with the random dual problems \eqref{eq:randomDualProblem}. If the matrix $A(\Mu)$ is SPD, one can also use the \emph{Galerkin PGD}  formulation
\begin{equation}\label{eq:MinEnergyPGD_4dual}
\min_{Y_{\mathcal{X}}\in\R^{N\times K}} \,
 \min_{Y_{1}:\mathcal{P}_1\rightarrow\R} 
 \cdots
 \min_{Y_{p}:\mathcal{P}_p\rightarrow\R} 
 ~\sum_{i=1}^K\sum_{\Mu\in\mathcal{P}} \|A(\Mu)^T\Big(\widetilde{Y}^{L-1}_i(\Mu) + Y_{\mathcal{X},i} Y_{1}(\Mu_1)  \cdots  Y_{p}(\Mu_p)\Big) - Z_i\|_{A(\Mu)^{-1}}^2 ,
\end{equation}
where the canonical norm $\|\cdot\|_2$ has been replaced by the energy norm $\|\cdot\|_{A(\Mu)^{-1}}$. As explained before, both of these formulations can be solved efficiently using the \emph{alternating least squares}.

Comparing \eqref{eq:MinResPGD_4dual} or \eqref{eq:MinEnergyPGD_4dual} with the formulations used for the primal PGD solution \eqref{eq:MinResPGD} or \eqref{eq:MinEnergyPGD}, the major difference is that the solution is matrix-valued for $K>1$, so that the first minimization problem over $Y_{\mathcal{X}}\in\R^{N\times K}$ might be, at a first glance, more expensive to compute compared to the minimum over $u_{\mathcal{X}}\in\R^N$ in \eqref{eq:MinResPGD} or \eqref{eq:MinEnergyPGD}. However, the minimization problem over $Y_{\mathcal{X}}\in\R^{N\times K}$ possesses a particular structure which we may exploit for computational efficiency. This structure is that each column of $Y_{\mathcal{X}}$ is the solution to a least squares problem with the same operator but with different right-hand side. Indeed, for fixed $Y_1(\Mu_1),\hdots,Y_p(\Mu_p)$, the $i$-th column of $Y_{\mathcal{X}}$ minimizes
$$
 Y_{\mathcal{X},i} \mapsto \sum_{\Mu\in\mathcal{P}} \|A(\Mu)^T Y_{\mathcal{X},i} Y_{1}(\Mu_1)  \cdots  Y_{p}(\Mu_p) - \big(Z_i-A(\Mu)^T\widetilde{Y}^{L-1}_i(\Mu) \big)\|_{*}^2,
$$
with either $\|\cdot\|_*=\|\cdot\|_2 $ or $\|\cdot\|_*=\|\cdot\|_{A(\Mu)^{-1}}$. 
This means that $Y_{\mathcal{X},i}$ can be computed by solving a linear system of the form $\widetilde A_{\mathcal{X}} Y_{\mathcal{X},i} = \widetilde Z_i$ for some operator $\widetilde A_{\mathcal{X}} \in\R^{N\times N}$ and right-hand side $\widetilde Z_i\in\R^N$ which are assembled\footnote{For example with the Galerkin PGD formulation \eqref{eq:MinEnergyPGD_4dual}, we have $\widetilde A_{\mathcal{X}} = \sum_{\Mu\in\Pcal} A(\Mu)^T Y_{1}(\Mu_1)^2  \cdots  Y_{p}(\Mu_p)^2$ and $\widetilde Z_i =\sum_{\Mu\in\Pcal} ( Z_i - A(\Mu)^T\widetilde{Y}^{L-1}_i(\Mu)) Y_{1}(\Mu_1)  \cdots  Y_{p}(\Mu_p) $.} using $A(\Mu)^T$, $Z_i$ and also $Y_{1}(\Mu_1),\hdots,Y_{p}(\Mu_p)$ and $\widetilde{Y}^{L-1}_i(\Mu)$. 
Then, by concatenating the $K$ right-hand sides, the matrix $Y_{\mathcal{X}}$ solution to \eqref{eq:MinResPGD_4dual} or \eqref{eq:MinEnergyPGD_4dual} can be obtained by solving a matrix equation of the form
$$
 \widetilde A_{\mathcal{X}} Y_{\mathcal{X}} = [\widetilde Z_1,\hdots,\widetilde Z_K].
$$
For certain solvers $Y_{\mathcal{X}}$ can be computed at a complexity which hardly depends on $K$. For instance, one can precompute say a sparse LU or Cholesky factorization of $\widetilde A_{\mathcal{X}}$ and use this factorization to solve for the multiple right-hand sides. Thus, when $K\ll N$, the complexity is dominated by the factorization of $\widetilde A_{\mathcal{X}}$ so that the costs for computing the dual PGD approximation is at least comparable to the one for the primal PGD approximation.

\begin{remark}[Complexity comparison with the hierarchical error estimator]\label{rmk:RankDualVSRankStag}
 Thanks to the above remark, it is worth mentioning that the cost for computing the dual approximation $\widetilde Y^L(\Mu)$ is essentially the same as for computing a hierarchical estimate $\widetilde u^{M+k}(\Mu)$ with $k=L$. Indeed, both $\widetilde Y^L(\Mu)$ and $\widetilde u^{M+k}(\Mu)$ require the computation of $k=L$ PGD updates (primal or dual) which can be computed at comparable cost. Therefore, in the numerical experiments, we pay particular attention in comparing our randomized estimators \eqref{eq: def a post est online} or \eqref{eq: def a post est online RMS} with the stagnation-based error estimator \eqref{eq:StagnationErrorEstimator} with $k=L$ to ensure a fair comparison.
 
\end{remark}

\begin{remark}
 Instead of looking for $\widetilde{Y}^{L}(\Mu)$ as in \eqref{eq:dual_PGD}, one could have build on the tensor product structure of $\R^{N\times K}=\R^N \otimes \R^K$ and look for approximation on the form of $Y_i(\Mu)\approx \sum_{m=1}^{L} Y^{m}_{\mathcal{X}} \,Y^{m}_K(i)\, Y^{m}_{1} (\Mu_1)\hdots Y^{m}_{p}(\Mu_p)$ where $Y^{m}_{\mathcal{X}}\in\R^N$ is a vector (not a matrix) and where $Y^{m}_K:\{1,\hdots,K\}\rightarrow\R$. This amounts to consider the column index $i$ as an additional parameter, and thus to increase the order of the problem by one. This option is however not pursued in this paper. 
 
\end{remark}

\subsection{Primal-dual intertwined PGD algorithm}\label{subsec:intertwined}

In this subsection we discuss how to choose the rank $L$ of the dual approximation $\widetilde Y^L(\Mu)$. 

First, one could choose the rank \emph{based on a given (computational) budget}. Assuming for instance that the budget is tight one could only consider dual PGD approximations of say rank $1$ to $5$. The numerical experiments in section \ref{sec5} show that this choice works remarkably well even for high dimensional examples. 
This choice, however, does not allow controlling the quality of the dual approximation so that neither $\alpha_{2/\infty}$ nor $\alpha_{2/\infty}^\text{rel}$ in Propositions \ref{prop:dualErrorMultiplicative} and \ref{prop:dualErrorMultiplicativerel} can be bounded.

In order to control the error in the dual approximation, we suggest building the primal and dual approximations simultaneously in an intertwined manner as described in Algorithm \ref{algo:pgf_greedy}. We focus here on the relative error estimator $\Delta^\text{ref}$ to simplify the notation, but other errors such as $\Delta$ or $\max_{\Mu \in \Pcal} \Delta(\Mu)$ can be considered as well. 
First, we calculate $K$ in line \ref{algo:calc_K} for given effectivity $\alpha w$ and failure probability $\delta$ via \eqref{eq:assumption dualErrorMultiplicativerel}.
As $\widetilde \Delta^\text{ref}$ invokes Proposition \ref{prop:Chi2Tail} $2\#\Pcal$-times in each greedy iteration and the maximal number of greedy iterations is $M_{max}$, we need to apply Proposition \ref{prop:Chi2Tail} at most $M_{max}2\#\Pcal$-times during Algorithm \ref{algo:pgf_greedy} and thus have to replace $2\#\Pcal$ by $M_{max}2\#\Pcal$ in \eqref{eq:assumption dualErrorMultiplicativerel} in the calculation of $K$. We then draw $K$ independent Gaussian random vectors with mean zero and covariance matrix $\Sigma$. 

Next, we describe how to build $\widetilde u^M(\Mu)$ and $\widetilde Y^L(\Mu)$ in an intertwined manner. Within each iteration of Algorithm \ref{algo:pgf_greedy} we first compute  the \emph{primal} rank-one correction using \eqref{eq:MinResPGD} or \eqref{eq:MinEnergyPGD}. Algorithm \ref{algo:pgf_greedy} terminates either if $\widetilde \Delta^\text{rel} \leq tol$ or $M=M_{max}$.

However, due to the dual PGD approximation it is a priori not clear whether $\widetilde \Delta^\text{rel}$ has an effectivity close to unity and thus estimates the relative error well. An effectivity that is well below one might result in an premature termination of the algorithm, which is clearly undesirable. This is why every time we need to estimate the error, we first assess the quality of $\widetilde \Delta^\text{rel}$ before using it as a stopping criterion and then, if necessary, enrich the dual PGD approximation until the quality of $\widetilde \Delta^\text{rel}$ is satisfactory.

\begin{algorithm}[t]
 \caption{Intertwined greedy construction of primal and dual PGD approximation}\label{algo:pgf_greedy}
 \begin{algorithmic}[1]
\State{\textbf{INPUT: tolerance $tol$, failure probability $\delta$, maximal rank $M_{max}$, $w$, $\alpha$, $\Pcal$, number of increments $k$}}
\State{Calculate $K$ based on \eqref{eq:assumption dualErrorMultiplicativerel} replacing $2\#\Pcal$ by $2M_{max}\#\Pcal$}\label{algo:calc_K}
\State{Draw $K$ independent random Gaussian vectors with mean zero and covariance matrix $\Sigma$}
\State{Initialize $\ur^0(\Mu) = 0$, $M=0$, $\widetilde \Delta^\text{rel} = 2 tol$, $L=1$}\\
\While{$\widetilde \Delta^\text{rel} > tol$ and $M < M_{max}$}
\State{\emph{Enrich primal PGD approximation:}}
\State{$M\leftarrow M+1$}
\State{Compute the primal rank-one correction using \eqref{eq:MinResPGD} or \eqref{eq:MinEnergyPGD}}
\State{Update $\widetilde{u}^{M}(\Mu) = \widetilde{u}^{M-1}(\Mu) + u^{M}_{\mathcal{X}}  u^{M}_{1}(\Mu_1)  \hdots  u^{M}_{p}(\Mu_p)$}\\

\State{\emph{Enrich dual PGD approximation until tolerance reached:}}
\State{Compute $\alpha_{2,k}^\text{rel}$ as in \eqref{eq:alpha2k}}
 \While{$\alpha_{2,k}^\text{rel}> \alpha$}\label{algo:while}
 \State{$L\leftarrow L+1$} 
 \State{Compute the primal rank-one correction using \eqref{eq:MinResPGD_4dual} or \eqref{eq:MinEnergyPGD_4dual}}
 \State{Update $\widetilde{Y}^{L}(\Mu) = \widetilde{Y}^{L-1}(\Mu) + Y^{L}_{\mathcal{X}} Y^{L}_{1}(\Mu_1)  \hdots Y^{L}_{p}(\Mu_p)$}

 \State{Compute $\alpha_{2,k}^\text{rel}$ as in \eqref{eq:alpha2k}}
 \EndWhile \\
 
 \State{\emph{Compute the error estimator:}}
 \State{Update $\widetilde \Delta^\text{rel}$ as in \eqref{eq: def a post est online RMS}}
 \EndWhile \\
 
\State{\textbf{OUTPUT:} $\ur^{M}(\Mu)$ and $\widetilde \Delta^\text{rel}$}
 \end{algorithmic}
\end{algorithm}

Motivated by Proposition \ref{prop:dualErrorMultiplicativerel}, the natural way to measure the quality of $\widetilde \Delta^\text{rel}$ is $\alpha_2^\text{rel} = \max\{\Delta^\text{rel}/\widetilde \Delta^\text{rel} ; \widetilde \Delta^\text{rel}/\Delta^\text{rel}\}$. Since the unbiased error estimator $\Delta^\text{rel}$ is not computationally feasible, as the solution $u(\Mu)$ is not known in advance, one possibility is to replace $\Delta^\text{rel}$ with the computationally feasible error estimator
\begin{equation}
 \Delta^\text{rel}_{+k} = \sqrt{ \frac{ \sum_{\Mu \in \mathcal{P}} \sum_{k=1}^{K} \big(Z_i^T (\widetilde u^{M+k}(\Mu)-\widetilde u^{M}(\Mu) )\big)^{2}}{  \sum_{\Mu \in \mathcal{P}} \sum_{k=1}^{K} \big(Z_i^T \widetilde u^{M+k}(\Mu) \big)^{2} }}.
\end{equation}
Here, the number of additional increments $k$ are chosen depending on the available computational budget. 

We also introduce
\begin{equation}
\widetilde\Delta^\text{rel}_{+k} = \frac{\sqrt{\sum_{\Mu\in\mathcal{P}} \sum_{i=1}^{K}(\widetilde Y_{i}^L(\Mu)^{T} A(\Mu)(\widetilde u^{M+k}(\Mu)-\widetilde u^{M}(\Mu)))^{2}}}{\sqrt{\sum_{\Mu\in\mathcal{P}}\sum_{i=1}^{K}(\widetilde Y_{i}^L(\Mu)^{T} A(\Mu)\widetilde u^{M+k}(\Mu))^{2}}},
\end{equation}
which is the error estimator of the \emph{known} increment $\widetilde u^{M+k}(\Mu)-\widetilde u^{M}(\Mu)$.
% which ensures that we can estimate the norms of the increments $\widetilde u^{M+k}(\Mu)-\widetilde u^{M}(\Mu)$ well. 
For larger $k$ we expect $\widetilde u^{M+k}(\Mu)-\widetilde u^{M}(\Mu)$ to be already a quite good error model justifying the choice $\widetilde\Delta^\text{rel}_{+k} = \widetilde \Delta^\text{rel}$. We recall and emphasize that the stagnation-based or hierarchical error estimator does neither ensure that the respective estimator bounds the error or allows to bound the effectivity unless we have for instance a uniform bound for the convergence rate of the PGD; see subsection \ref{subsec:PGD}. Therefore, we use the increments solely as an error model to train the dual PGD approximation such that we can then rely on Proposition \ref{prop:dualErrorMultiplicativerel}. 
We then utilize in line \ref{algo:while} 
\begin{equation}\label{eq:alpha2k}
 \alpha_{2,k}^\text{rel} := \max \left\{\frac{\Delta^\text{rel}_{\pm k}}{\widetilde \Delta^\text{rel}_{\pm k}} \,;\, \frac{\widetilde \Delta^\text{rel}_{\pm k}}{\Delta^\text{rel}_{\pm k}} \right\} ,
\end{equation}
where 
\begin{equation}\label{eq:less_exp_err_ind}
 \Delta^\text{rel}_{-k} = \sqrt{ \frac{ \sum_{\Mu \in \mathcal{P}} \sum_{k=1}^{K} \big(Z_i^T (\widetilde u^{M}(\Mu)-\widetilde u^{M-k}(\Mu) )\big)^{2}}{  \sum_{\Mu \in \mathcal{P}} \sum_{k=1}^{K} \big(Z_i^T \widetilde u^{M}(\Mu) \big)^{2} }}
 \quad\text{ and }\quad
 \widetilde\Delta^\text{rel}_{-k} = \sqrt{ \frac{ \sum_{\Mu \in \mathcal{P}} \sum_{k=1}^{K} \big(\widetilde Y_i^L(\Mu)^T A(\Mu)(\widetilde u^{M}(\Mu)-\widetilde u^{M-k}(\Mu)) \big)^{2}}{  \sum_{\Mu \in \mathcal{P}} \sum_{k=1}^{K} \big(\widetilde Y_i^L(\Mu)^T A(\Mu)\widetilde u^{M}(\Mu) \big)^{2} }}.
\end{equation}
With this choice, one can compute $\alpha_{2,k}^\text{rel}$ with \emph{no additional} computational effort. Apart from that, the motivation behind the definition of $\Delta^\text{rel}_{-k}$ and $\widetilde\Delta^\text{rel}_{-k}$ is that we can estimate the norms of the increments $\widetilde u^{M}(\Mu)-\widetilde u^{M-k}(\Mu)$ well. Assuming a relatively uniform convergence behavior of the primal PGD approximation, it can thus be conjectured that $\widetilde \Delta^\text{rel}$ then also estimates the norm of $u(\Mu) - \widetilde u^{M}(\Mu)$ well; this is demonstrated in section \ref{sec5}. Algorithm \ref{algo:pgf_greedy} terminates the enrichment of the dual PGD approximation within each iteration if $\alpha_{2,k}^\text{rel}$ falls below some prescribed value $\alpha>1$. 

\begin{remark}[Independence]
We note that if the error in the primal PGD approximation is already close to the target tolerance $tol$ the primal PGD approximation can become dependent on the respective realizations of $Z_{1},\hdots,Z_{K}$ in the following sense: For some realizations the estimator $\widetilde \Delta^\text{rel}$ lies below $tol$ and thus terminates Algorithm \ref{algo:pgf_greedy} while for others we have $\widetilde \Delta^\text{rel} >tol$ and thus obtain a primal PGD approximation of higher rank. A careful analysis of this effect on the failure probabilities in Proposition \ref{prop:dualErrorMultiplicativerel} is however not necessary as by replacing $2\#\mathcal{P}$ by $2M_{max}\#\Pcal$ in the calculation of $K$ in line \ref{algo:calc_K} we ensure that the number of samples $K$ is chosen large enough such that we can estimate all possible outcomes. 
\end{remark}

\section{Numerical experiments}\label{sec5}

We demonstrate the randomized error estimator for two test cases which are derived from the benchmark problem introduced in \cite{lam2018multifidelity}. The source code is freely available in Matlab$^{\circledR}$ at the address\footnote{\url{https://gitlab.inria.fr/ozahm/randomdual_pgd.git}} so that all numerical results presented in this section are entirely reproducible.
We consider the system of linear PDEs whose solution $\mathcal{u}:D\rightarrow\R^2$ is a vector field satisfying
\begin{equation}\label{eq:WrenchmarkModel}
 - \text{div}(C:\varepsilon(\mathcal{u})) - k^2 \mathcal{u} = \mathcal{f},
\end{equation}
where $D\subset\R^2$ is a domain which has the shape of a wrench; see Fig.~\ref{fig:Wrench}. Here, $\varepsilon(\mathcal{u})=\frac{1}{2}(\nabla\mathcal{u}+\nabla\mathcal{u}^T)$ is the strain tensor, $k^2$ the wave number, and $C$ the fourth-order stiffness tensor derived from the plane stress assumption with Young's modulus $E$ and fixed Poisson coefficient $\nu=0.3$ such that
\begin{equation}\label{eq:Hooke}
 C:\varepsilon = \frac{E}{1+\nu}\varepsilon + \frac{\nu E}{1-\nu^2}\text{trace}(\varepsilon)I_2.
\end{equation}
The right hand-side $\mathcal{f}$ and the boundary conditions are represented in Fig.~\ref{fig:Wrench}. The finite element discretization using piecewise affine basis functions $\psi_1,\hdots,\psi_N$ yields $N=1467$ degrees of freedom. In the remainder of the paper, we only consider estimating errors in the natural Hilbert norm $\|\cdot\|_{\mathcal{X}}$ defined by 
$$
 \|\mathcal{u}\|_{\mathcal{X}}^2 = \int_{D} \text{trace}(\nabla \mathcal{u}(x)^T\nabla \mathcal{u}(x)) + \|\mathcal{u}(x)\|_2^2 \,\text{d}x,
$$
so that $\Sigma = R_{\mathcal{X}_N}$, as explained section \ref{sec2} (see equation \eqref{eq:NormsDefinition}).

\begin{figure}[t]
  \centering 
  \begin{subfigure}[t]{0.43\textwidth}
  \centering 
    \includegraphics[width = \textwidth]{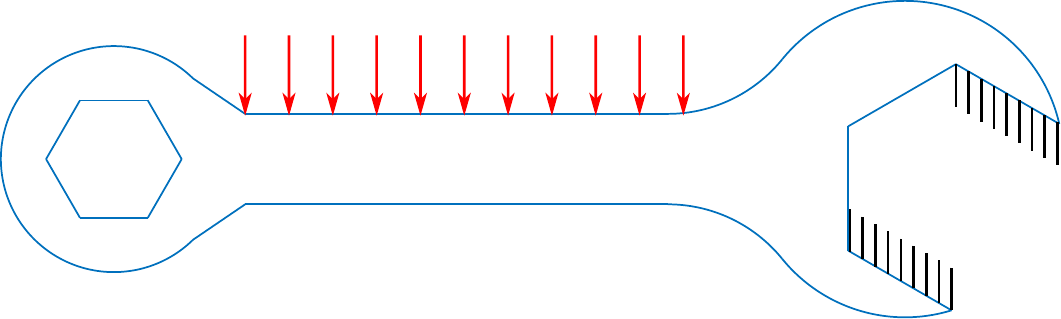} 
  \end{subfigure}~~~~~~~~~
  \begin{subfigure}[t]{0.43\textwidth}
  \centering
    \includegraphics[width = \textwidth]{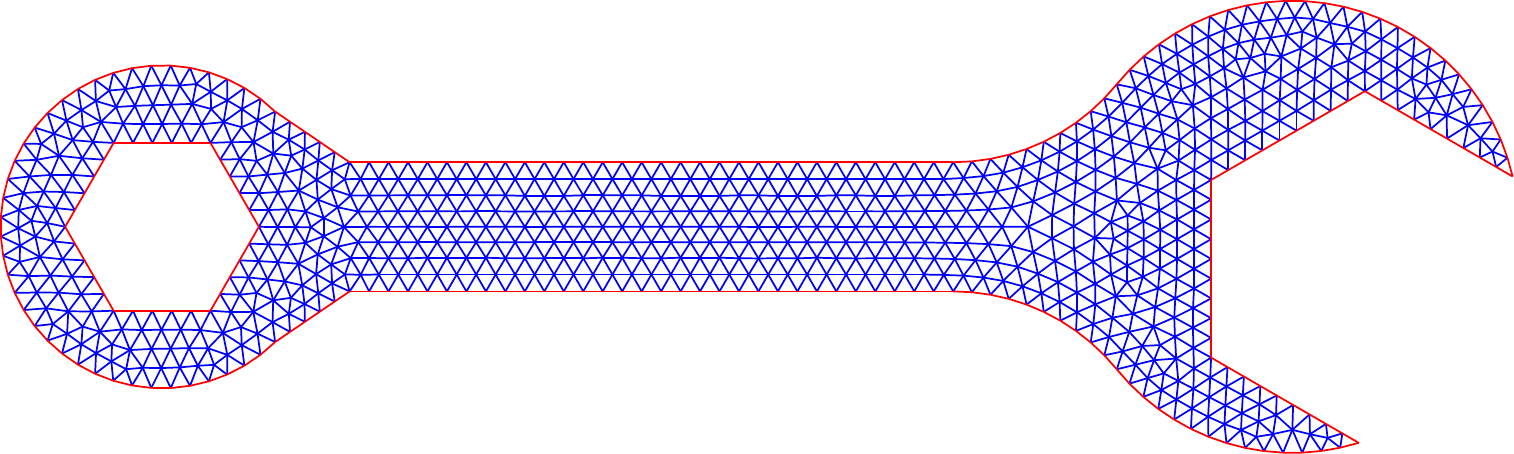}
  \end{subfigure}
  \caption{Left: Geometry of the wrench. Dirichlet condition $\mathcal{u}=0$ (black chopped lines), vertical unitary linear forcing $\mathcal{f}$ (red arrows). Right: Mesh for the finite element approximation.
  }
  \label{fig:Wrench}
\end{figure}

\begin{figure}[t]
  \centering 
  \begin{subfigure}[t]{0.43\textwidth}
  \centering 
    \includegraphics[width = 0.7\textwidth]{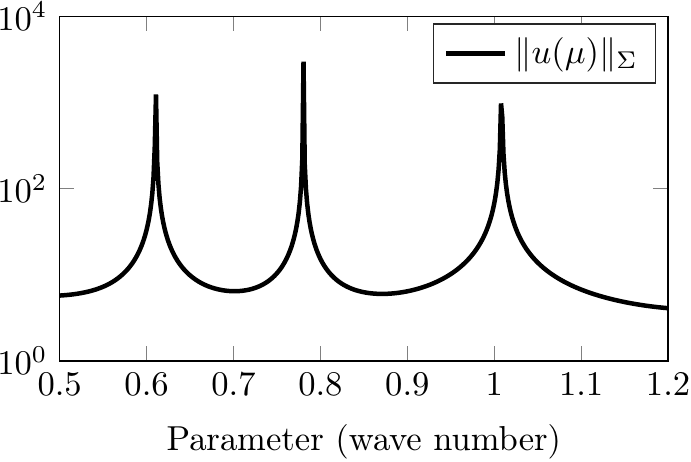} 
  \end{subfigure}~~~~~~~~~
  \begin{subfigure}[t]{0.43\textwidth}
  \centering
    \includegraphics[width = \textwidth]{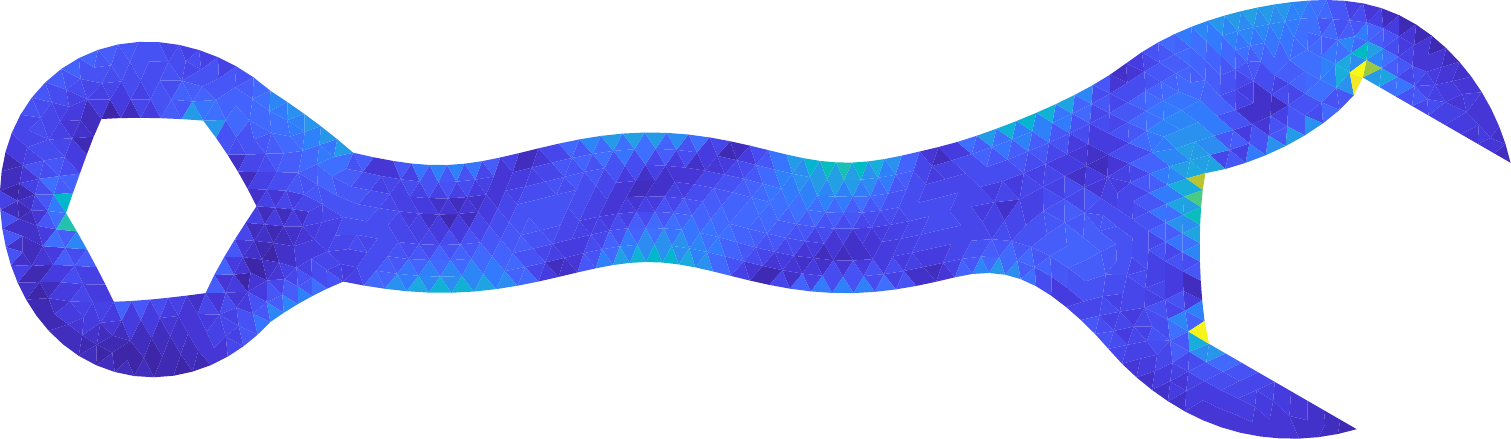}
  \end{subfigure}
  \caption{Time-harmonic elastodynamics. Left: Norm of the solution on $\mathcal{P}$. Right: finite element solution $\mathcal{u}(\Mu)$ for $\Mu=1$. The color represents the VonMises stress.
  }
  \label{fig:Helmholtz}
\end{figure}

\subsection{Non-homogeneous time-harmonic elastodynamics problem}

In this benchmark problem the Young's modulus is fixed to $E=1$ uniformly over the domain $D$ and the wave number $k^2$ varies between $0.5$ and $1.2$. The scalar parameter $\Mu=k^2$ varies on the parameter set $\mathcal{P}$ defined as the uniform grid of $[0.5 , 1.2]$ containing $\#\mathcal{P}=500$ points. The parametrized operator derived from \eqref{eq:WrenchmarkModel} is 
$$
 \mathcal{A}(\Mu) = - \text{div}(C:\varepsilon(\cdot)) - \Mu \, (\cdot).
$$
The parametrized stiffness matrix $A(\Mu)\in\R^{N\times N}$, obtained via a FE discretization, admits an affine decomposition $A(\Mu) = A_1 - \Mu A_2$, where $(A_1)_{ij} = \int_D \varepsilon(\psi_i):C:\varepsilon(\psi_j) \text{d}x$ and $(A_2)_{ij} = \int_D \psi_i^T \psi_j \text{d}x$. As this benchmark problem contains only one scalar parameter, it is convenient to inspect how the method behaves within the parameter set. We observe in Fig.~\ref{fig:Helmholtz} that the norm of the solution blows up around three parameter values, which indicates resonances of the operator. This shows that for this parameter range, the matrix $A(\Mu)$ is not SPD and therefore we use the Minimal Residual PGD formulations \eqref{eq:MinResPGD} and \eqref{eq:MinResPGD_4dual} for the primal and dual PGD approximation.

\subsubsection{Estimating the relative error}

\begin{figure}[t]
  \centering 
  \begin{subfigure}[t]{0.48\textwidth}
  \centering 
    \includegraphics[width = \textwidth]{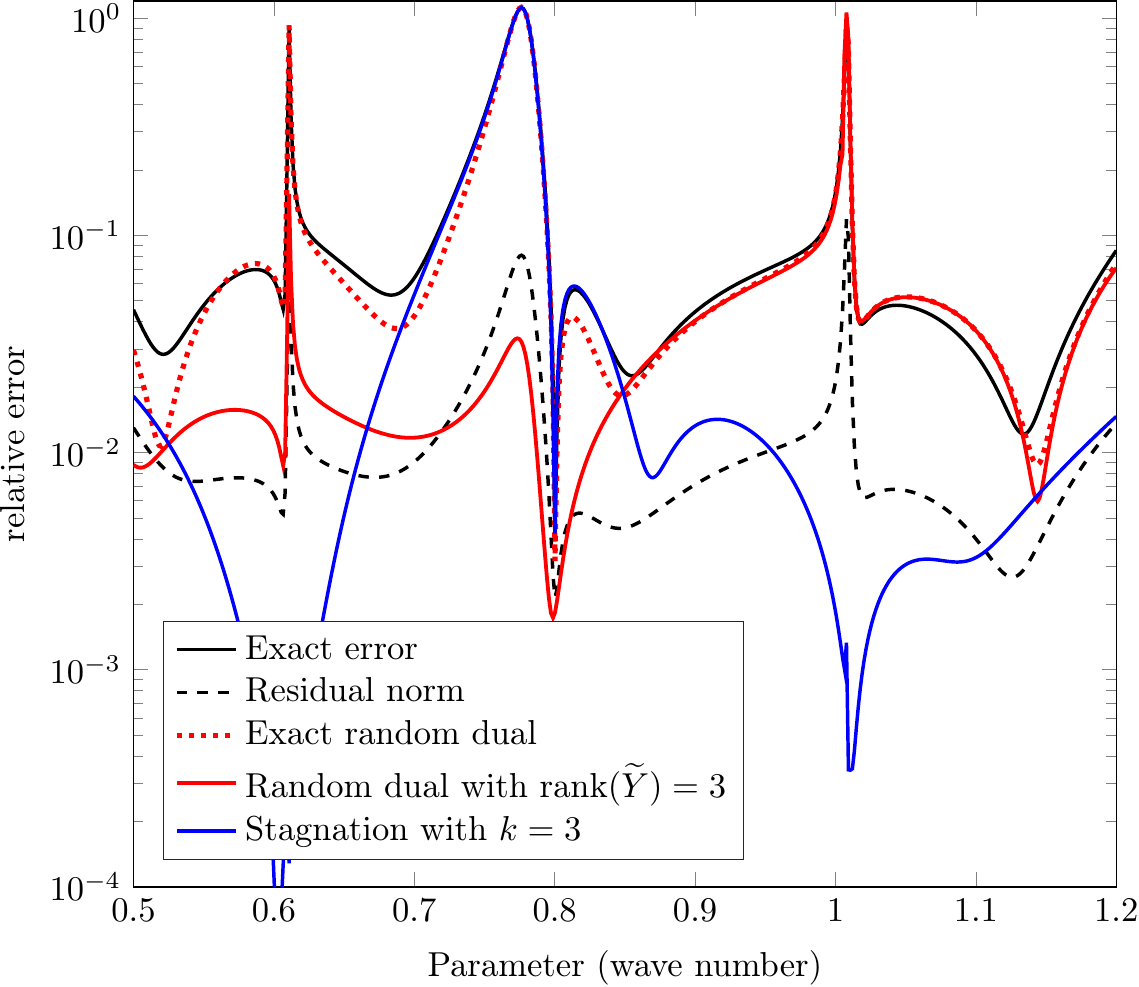} 
  \end{subfigure}~~~~~~~~~
  \begin{subfigure}[t]{0.48\textwidth}
  \centering
    \includegraphics[width = \textwidth]{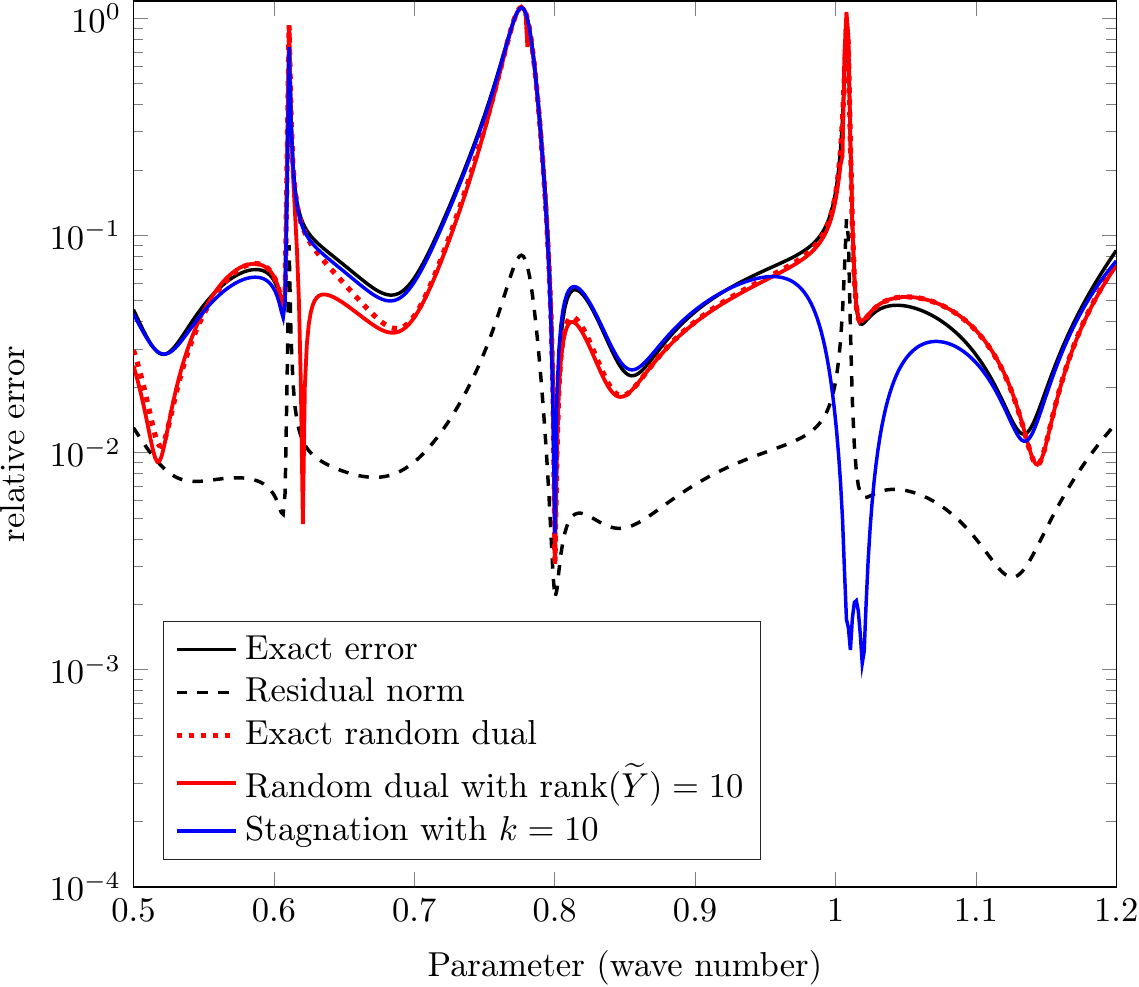}
  \end{subfigure}
  \caption{Time-harmonic elastodynamics. Comparison of several error estimators for the relative error of an PGD approximation with rank ten. Solid black line: Exact error \eqref{eq:HelmholtzExactRelativeError}. Dashed black line: $\Delta^\text{res}_{10}(\Mu)$. Dotted red line: one realization of the exact relative random estimate $\Delta^\text{rel}(\Mu)$ with $K=6$, see Equation \eqref{eq:rel_err_est}. Solid red line: approximation $\widetilde{\Delta}^\text{rel}(\Mu)$ of $\Delta^\text{rel}(\Mu)$ defined in \eqref{eq: def a post est online} $\text{rank}(\widetilde Y)=3$ (left) and $\text{rank}(\widetilde Y)=10$ (right). Solid blue line: $\Delta^\text{stag}_{10,k}(\Mu)$ with either $k=3$ (left) or $k=10$ (right).}
  \label{fig:Helmholtz_ErrorsOnParameterSpace}
\end{figure}

We first compute a PGD approximation $\widetilde{u}^M(\Mu)$ with rank $M=10$ and we aim at estimating the relative error
\begin{equation}\label{eq:HelmholtzExactRelativeError}
 \frac{\|u(\Mu) - \widetilde{u}^{10}(\Mu) \|_{\mathcal{X}_N}}{\|u(\Mu) \|_{\mathcal{X}_N}}
\end{equation}
for every $\Mu\in\mathcal{P}$. In Fig.~\ref{fig:Helmholtz_ErrorsOnParameterSpace} we show a comparison of the performances of the randomized residual-based error estimator $\widetilde{\Delta}^\text{rel}(\Mu)$ defined in \eqref{eq: def a post est online}, the error estimator based on the dual norm of the residual, and the stagnation-based error estimator defined, respectively, as
\begin{equation}\label{eq:HelmholtzResidualAndStagnationEstimators}
\Delta^\text{res}_{M}(\Mu):=\frac{\|A(\Mu)\widetilde{u}^{M}(\Mu)-f(\Mu)\|_{\mathcal{X}_N'}}{\|f(\Mu)\|_{\mathcal{X}_N'} }
 \qquad\text{and}\qquad
\Delta^\text{stag}_{M,k}(\Mu):= \frac{\| \widetilde{u}^{M+k}(\Mu) - \widetilde{u}^{M}(\Mu) \|_{\mathcal{X}_N}}{\|\widetilde{u}^{M+k}(\Mu) \|_{\mathcal{X}_N}}.
\end{equation}
First, we observe in Fig.~\ref{fig:Helmholtz_ErrorsOnParameterSpace} that although the residual-based error estimator $\Delta^\text{res}_{10}(\Mu)$ follows the overall variations in the error, it underestimates the error significantly over the whole considered parameter range. Following Remark \ref{rmk:RankDualVSRankStag}, in order to have a fair comparison between the stagnation-based and the randomized error estimator, we set $k=L=\text{rank}(\widetilde Y^L)$ so that the two estimators require the same computational effort in terms of number of PGD corrections that have to be computed. We observe that with either $k=L=3$ (left) or $k=L=10$ (right), the randomized residual-based error estimator is closer to the true relative error compared to the stagnation-based estimator $\Delta^\text{stag}_{10,k}(\Mu)$. Notice however that the randomized error estimator $\widetilde{\Delta}^\text{rel}(\Mu)$ is converging towards $\Delta^\text{rel}(\Mu)$ with $\text{rank}(\widetilde Y)$ which is not the true relative error (see the dashed red lines on Fig.~\ref{fig:Helmholtz_ErrorsOnParameterSpace}), whereas $\Delta^\text{stag}_{10,k}(\Mu)$ will converge to the true relative error with $k$. Nevertheless, we do observe that for small $k=L$, our random dual is always competitive. In addition, increasing the number of samples $K$ will reduce the variance of both $\widetilde{\Delta}^\text{rel}(\Mu)$ and $\Delta^\text{rel}(\Mu)$ such that the randomized error estimator will follow the error even more closely (see also  Fig.~\ref{fig:Helmholtz_histOfEffectivities} to that end). We finally note that Fig.~\ref{fig:Helmholtz_ErrorsOnParameterSpace} shows only one (representative) realization of $\widetilde{\Delta}^\text{rel}(\Mu)$ and $\Delta^\text{rel}(\Mu)$, and the behavior of the effectivity of $\widetilde{\Delta}^\text{rel}(\Mu)$ over various realizations will be discussed next.

\begin{figure}[t]
  \centering 
  \includegraphics[width = \textwidth]{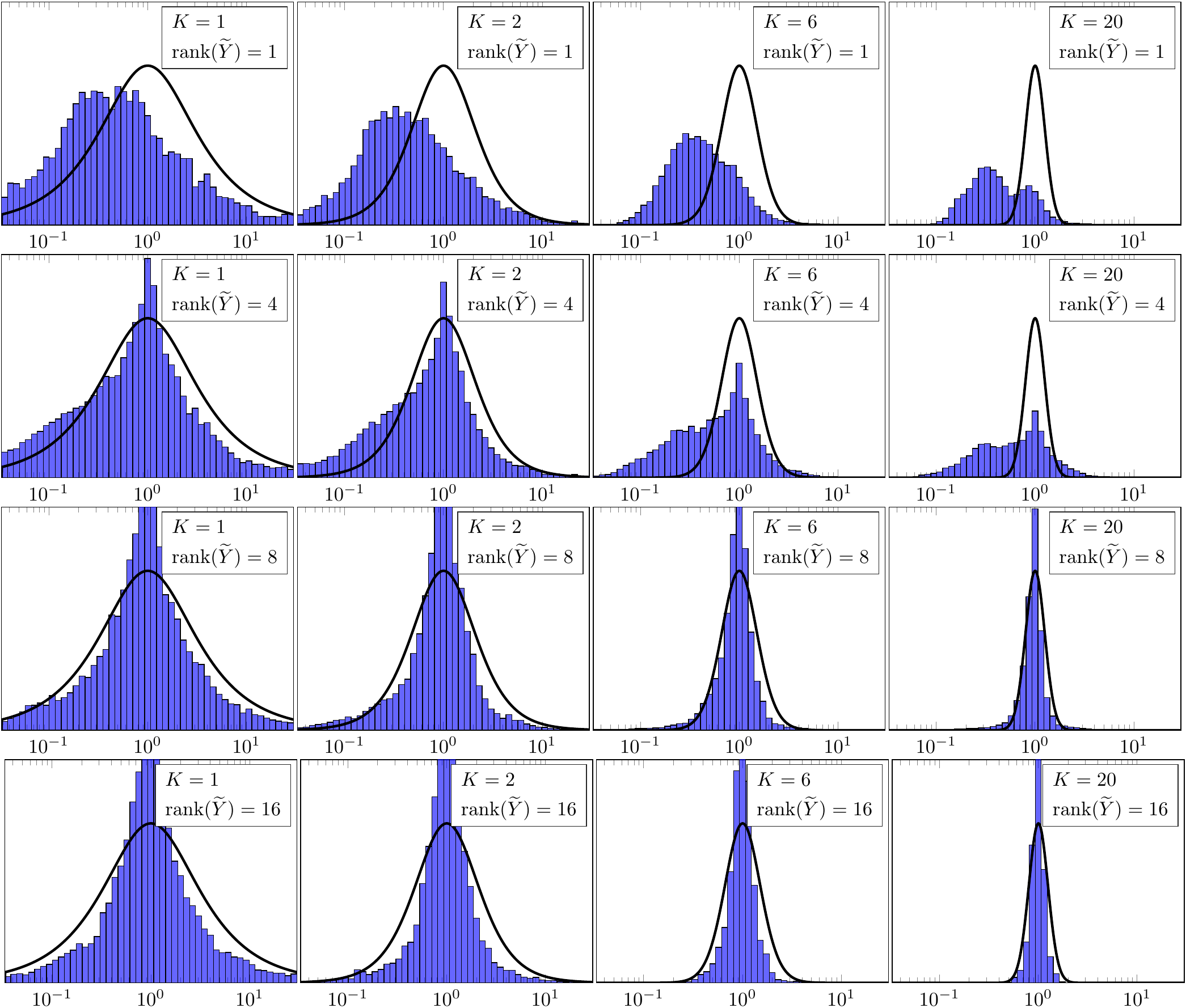}
  \caption{Time-harmonic elastodynamics. Histograms of $100$ realizations of the effectivity index $\{\eta(\Mu), \Mu\in\mathcal{P}\}$ defined by \eqref{eq:Helmholtz_ETA} for $K=1,2,6,20$ and $\text{rank}(\widetilde Y) = 1,4,8,16$. Solid line: pdf of the square root of the F-distribution with degrees of freedom $K$ and $K$.
  }
  \label{fig:Helmholtz_histOfEffectivities}
\end{figure}

In detail, we first draw $100$ different realizations of $Z=[Z_1,\hdots,Z_K]$ , $Z_i\sim\mathcal{N}(0,\Sigma)$, for $K=1,2,6,20$ and compute the corresponding dual solutions $\widetilde Y^L$ with either $L=\text{rank}(\widetilde Y^L) = 1,4,8,16$. Next, we compute the randomized error estimator $\widetilde{\Delta}^\text{rel}(\Mu)$ and calculate the effectivity index
\begin{equation}\label{eq:Helmholtz_ETA}
 \eta(\Mu) = \frac{ \widetilde{\Delta}^\text{rel}(\Mu) }{\|u(\Mu) - \widetilde{u}^{10}(\Mu) \|_{\mathcal{X}_N}/\|u(\Mu)\|_{\mathcal{X}_N}},
\end{equation} 
for all $\Mu\in\mathcal{P}$. The resulting histograms are shown in Fig.~\ref{fig:Helmholtz_histOfEffectivities}. First, we emphasize that especially for $\widetilde{\Delta}^\text{rel}(\Mu)$ based on a PGD approximation of the dual problems of rank $8$, $16$, and to some extent even $4$, we clearly see a strong concentration of $\eta(\Mu)$ around one as desired. If the number of samples $K$ increases we observe, as expected, that the effectivity of $\widetilde{\Delta}^\text{rel}(\Mu)$ is very close to one (see for instance picture with $K=20$ and $\rank(\widetilde Y)=8$ in Fig.~\ref{fig:Helmholtz_histOfEffectivities}). We recall that for the primal PGD approximation used for these numerical experiments we chose the rank $M=10$. We thus highlight and infer that by using a dual PGD approximation of lower rank than those of the primal PGD approximation, we can obtain an error estimator with an effectivity index very close to one. 

Using a dual PGD approximation of rank one and to some extend rank four, we observe in Fig.~\ref{fig:Helmholtz_histOfEffectivities} that the mean over all samples of $\eta(\Mu)$ is significantly smaller than one (see for instance picture with $K=6$ and $\rank(\widetilde Y)=1$ in Fig.~\ref{fig:Helmholtz_histOfEffectivities}). This behavior can be explained by the fact that in these cases the dual PGD approximation is not rich enough to characterize all the features of the error which results in an underestimation of the error. 

Finally, as explained in Remark \ref{rmk:Fdistribution} and more detailed in section \ref{sec:f-distribution}, $\eta(\Mu)$ is distributed as the square root of an $F$-distribution with degress of freedom $(K,K)$, provided some orthogonality condition holds. The solid black line in Fig.~\ref{fig:Helmholtz_histOfEffectivities} is the pdf of such a square root of an $F$-distribution with degress of freedom $(K,K)$. In particular, we observe a nice accordance with the histograms showing that, for sufficiently large $L=\text{rank}(\widetilde Y)$, the effectivity indices concentrate around unity with $K$, as predicted by Proposition \ref{prop:dualErrorMultiplicativerel}.

\subsubsection{Testing the intertwined algorithm}

 \begin{figure}[t]
  \centering 
  \begin{subfigure}[t]{0.48\textwidth}
  \centering 
    \includegraphics[width = \textwidth]{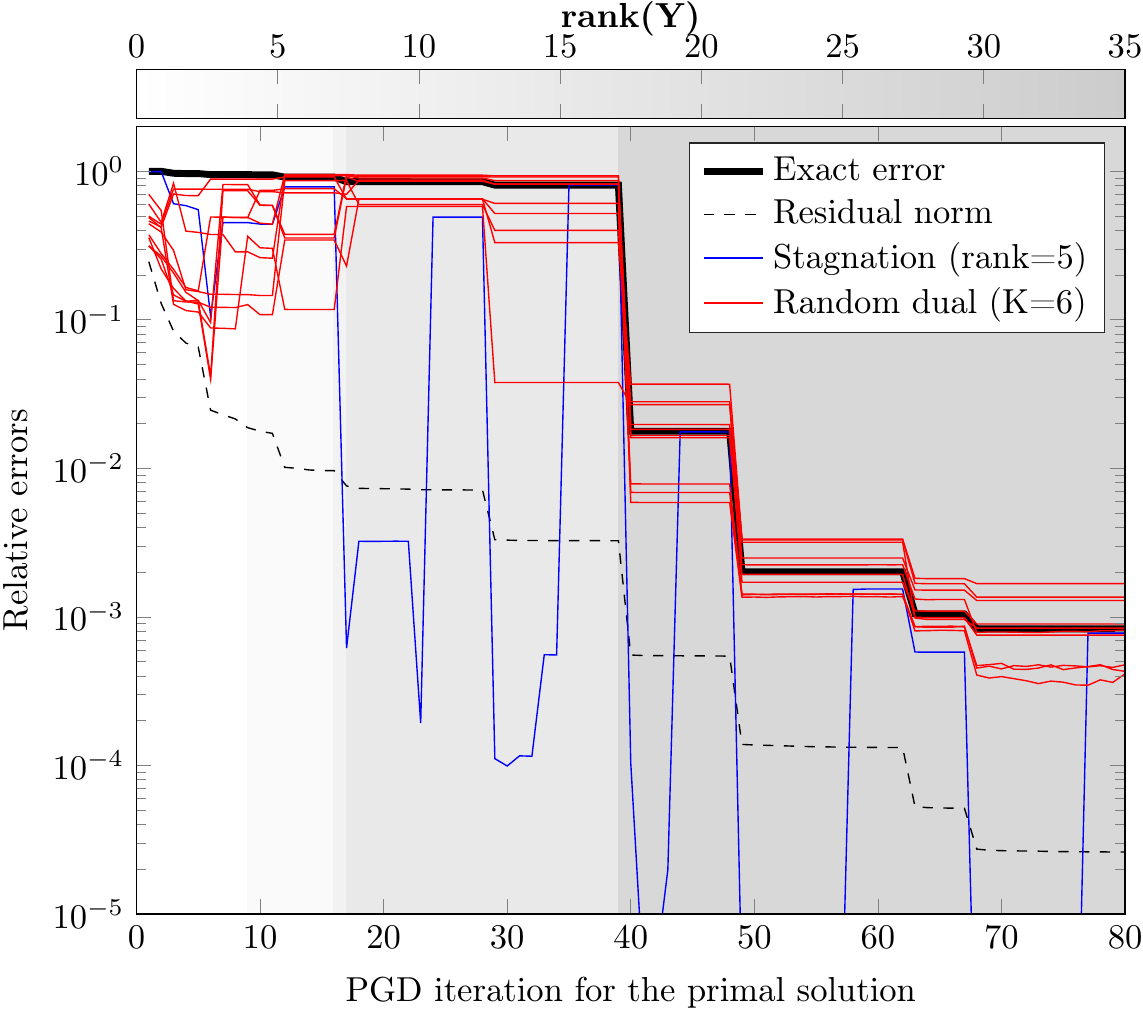} 
  \end{subfigure}~~~~~~
  \begin{subfigure}[t]{0.48\textwidth}
  \centering
    \includegraphics[width = \textwidth]{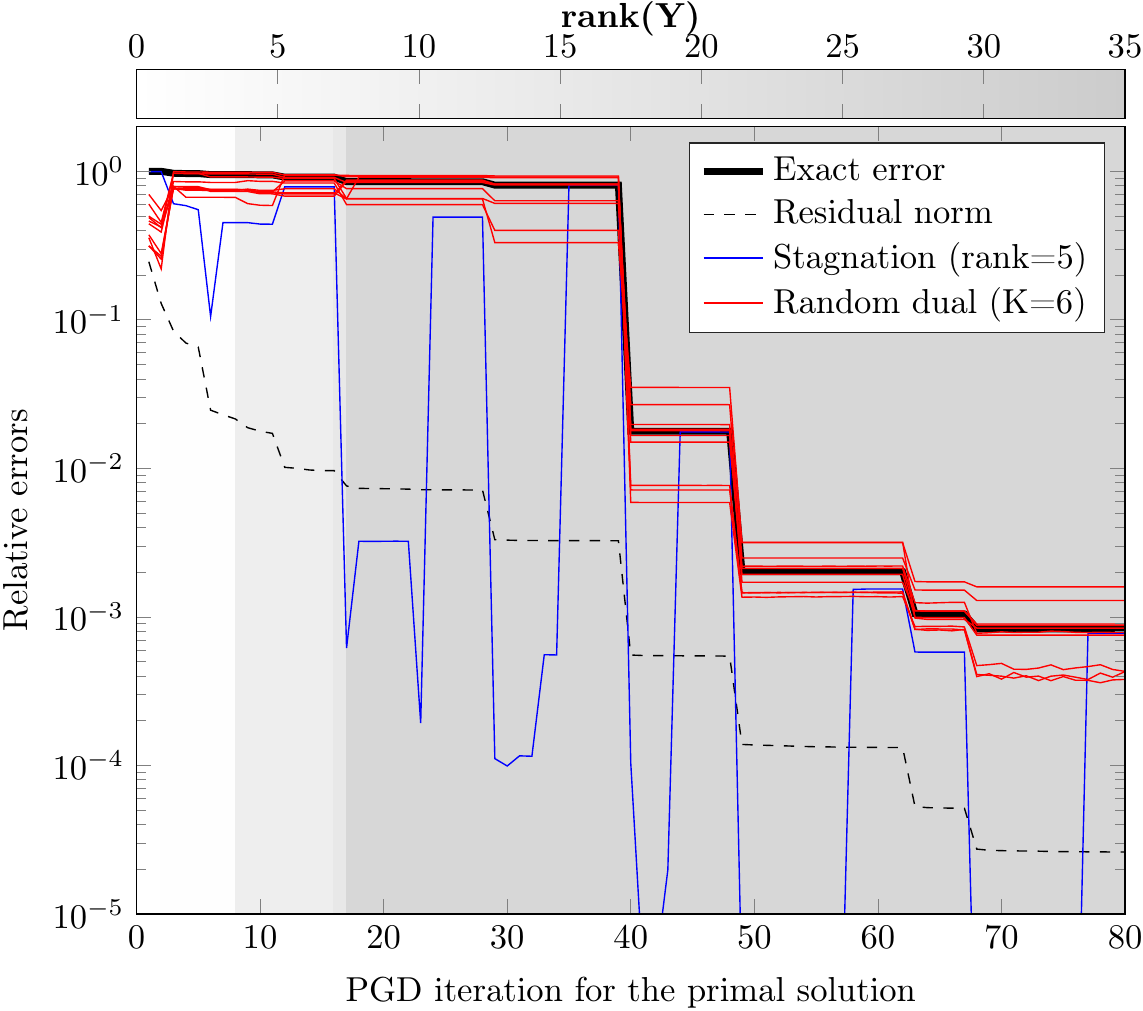}
  \end{subfigure}
  \caption{Time-harmonic elastodynamics. Illustration of the intertwined algorithm \ref{algo:pgf_greedy}. 
  Red lines: error estimator $\widetilde\Delta^\text{rel}$ produced by Algorithm \ref{algo:pgf_greedy} with $k=6$ and with either $\alpha=5$ (left) or $\alpha=2$ (right). Each of the ten red lines corresponds to a different realization of the $Z_1,\hdots,Z_K$ with $K=10$.
  Solid black line: exact relative RMS error $\|u-\widetilde u^M(\Mu)\|/\|u(\Mu)\|$.
  Blue line: stagnation-based error estimator $\Delta_{M,k}^\text{stag}$ defined by \eqref{eq:HelmholtzResidualAndStagnationEstimators_Intertwined} with $k=5$.
  Dashed black lines: relative RMS residual norm $\Delta_{M}^\text{res}$, see \eqref{eq:HelmholtzResidualAndStagnationEstimators_Intertwined}.
  Background color: value of $L=\text{rank}(\widetilde Y^L)$ averaged over the ten realizations.}
  \label{fig:Helmholtz_intertwined}
\end{figure}

We now illustrate the intertwined algorithm introduced in Subsection \ref{subsec:intertwined}. Here, the goal is to monitor the convergence of the PGD algorithm during the iteration process $M=1,2,\hdots$. We thus estimate the relative RMS error
$$
 \frac{\|u-\widetilde{u}^M \|}{\|u\|} \overset{\eqref{eq:IdealPGDError}}{=} 
 \sqrt{\frac{\frac{1}{\#\mathcal{P}}\sum_{\Mu\in\mathcal{P}} \|u(\Mu)-\widetilde{u}^M(\Mu)\|_{\mathcal{X}_N}^2 }{ \frac{1}{\#\mathcal{P}}\sum_{\Mu\in\mathcal{P}} \|u(\Mu)\|_{\mathcal{X}_N}^2 }},
$$
with our relative randomized error estimator $\widetilde{\Delta}^\text{rel}(\Mu)$ introduced in \eqref{eq: def a post est online} with the rank adaptation for the random dual approximation $\widetilde Y^L$. We employ the economical error indicators within the intertwined algorithm \ref{algo:pgf_greedy} with $k=6$.
Fig.~\ref{fig:Helmholtz_intertwined} shows the performance of our estimator (10 realizations) as well as the increase in $L=\text{rank}(\widetilde Y^L)$ (averaged over the 10 realizations) during the PGD iteration process. We also compare to the relative RMS residual norm $\Delta_{M}^\text{res}$ \eqref{eq:RelativeResNorm} and to the relative RMS stagnation-based error estimator $\Delta_{M,k}^\text{stag}$ \eqref{eq:StagnationErrorEstimator} with $k=5$ for which we recall the expression
\begin{equation}\label{eq:HelmholtzResidualAndStagnationEstimators_Intertwined}
 \Delta_{M}^\text{res} := \sqrt{\frac{\frac{1}{\#\mathcal{P}}\sum_{\Mu\in\mathcal{P}}\|A(\Mu)\widetilde{u}^{M}(\Mu)-f(\Mu)\|^2_{\mathcal{X}_N'} }{\frac{1}{\#\mathcal{P}}\sum_{\Mu\in\mathcal{P}}\|f(\Mu)\|^2_{\mathcal{X}_N'}}}
 \qquad\text{and}\qquad
 \Delta_{M,5}^\text{stag} :=\sqrt{\frac{\frac{1}{\#\mathcal{P}}\sum_{\Mu\in\mathcal{P}}\| \widetilde{u}^{M+5}(\Mu) - \widetilde{u}^{M}(\Mu) \|^2_{\mathcal{X}_N} }{\frac{1}{\#\mathcal{P}}\sum_{\Mu\in\mathcal{P}}\|\widetilde{u}^{M+5}(\Mu) \|^2_{\mathcal{X}_N} }}
\end{equation}

We notice that our estimator is much more stable compared to the stagnation-based error estimator which tends to dramatically underestimate the error during the phases where the PGD is plateauing. Our estimator is also much more accurate compared to $\Delta_{M}^\text{res}$. When comparing the results with $\alpha=5$ (left) and $\alpha=2$ (right), we observe that a large $\alpha$ results in smaller dual rank $L=\text{rank}(\widetilde Y^L)$ but the quality of the error indicator is deteriorated. Reciprocally, a smaller $\alpha$ yields more demanding dual approximations so that $L=\text{rank}(\widetilde Y^L)$ is, on average, larger. Let us notice that a rank explosion of the dual variable $\widetilde Y^L$ is not recommended because the complexity for evaluating $\widetilde \Delta^\text{rel}$ will also explode. This will be further discussed in an upcoming paper.

\subsection{Parametrized linear elasticity with high-dimensional parameter space} 

\begin{figure}[t]
  \centering 
  \begin{subfigure}[t]{0.43\textwidth}
  \centering 
    \includegraphics[width = \textwidth]{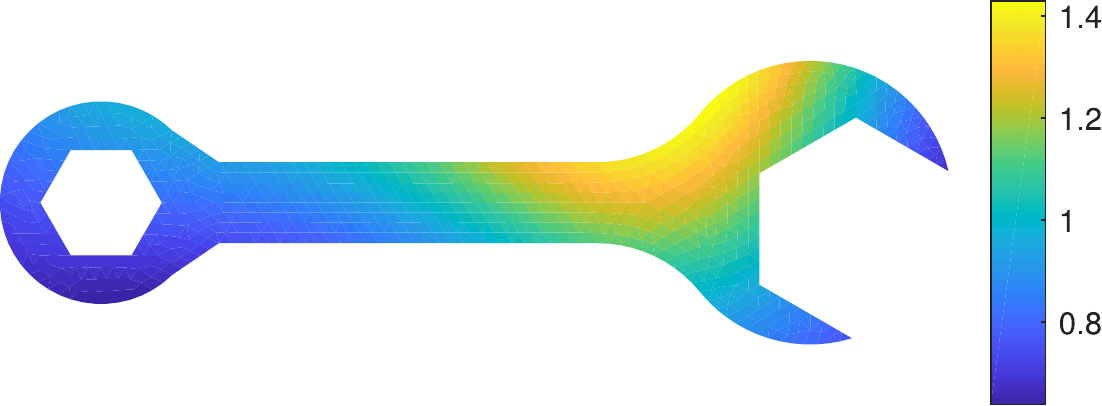} 
  \end{subfigure}~~~~~~~~~
  \begin{subfigure}[t]{0.43\textwidth}
  \centering
    \includegraphics[width = \textwidth]{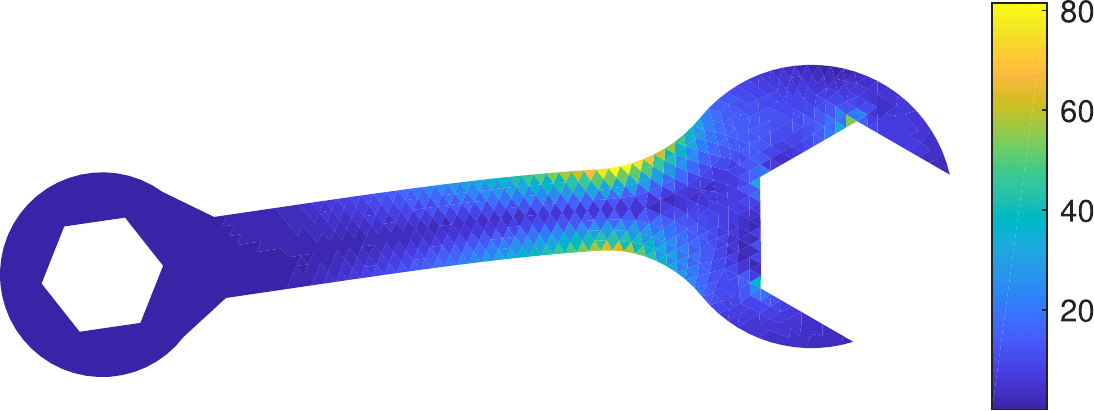}
  \end{subfigure}
  \caption{Linear elasticity. Left: One realization of the field $\log(E)$. Right: Corresponding solution $\mathcal{u}(\Mu)$. The color represents the VonMises stress.
  }
  \label{fig:Elliptic}
\end{figure}

In this benchmark problem the wave number is set to $k^2=0$ so that \eqref{eq:WrenchmarkModel} becomes an elliptic PDE. Young's modulus $E$ is a random field on $D$ that is log-normally distributed so that $\log(E)$ is a Gaussian random field on $D$ with zero-mean and covariance function $cov(x,y) =  \sigma_0 \exp( -(\|x-y\|_2/l_0)^2  )$ with $\sigma=0.4$ and $l_0=4$. A realization of the Young's modulus field is shown in Fig.~\ref{fig:Elliptic}. Since $E$ is a random field, such a parametrized problem contains infinitely many parameters. We use here a truncated Karhunen Lo\`eve decomposition with $p=20$ terms to approximate the random Young's modulus field by a finite number of parameter $\Mu_1,\hdots,\Mu_{20}$, that is
\begin{equation}\label{eq:KLdecomp}
 E \approx E(\Mu_1,\hdots,\Mu_{20}) = \exp( \sum_{i=1}^{20} \Mu_i \sqrt{\sigma_i}\phi_i ),
\end{equation}
where $(\sigma_i,\phi_i)$ is the $i$-th eigenpair of the covariance function $cov$ and $\Mu_i$ are independent standard Gaussian random variables. This means that the parameter set is $\R^{20}$ and that $\rho$ is the density of a standard normal distribution in dimension $p=20$. In actual practice, we replace the parameter set by a $20$-dimensional grid $\mathcal{P}:=(x_1,\hdots,x_{50})^{20} \subset \R^{20}$, where $x_1,\hdots,x_{50}$ is an optimal quantization of the univariate standard normal distribution using $50$ points which are obtained by Lloyd's algorithm \cite{du1999centroidal}. Next, we use the Empirical Interpolation Method (EIM) \cite{BMNP04} using $28$ ``magic points'' in order to obtain an affine decomposition of the form 
\begin{equation}\label{eq:EIM}
 E(\Mu_1,\hdots,\Mu_{20}) \approx \sum_{i=1}^{28} E_j \,\zeta^j(\Mu_1,\hdots,\Mu_{20}).
\end{equation}
for some spatial functions $E_j:D\rightarrow\R$ and parametric functions $\zeta^j:\R^{20}\rightarrow\R$. With the EIM method, the functions $\zeta^j$ are defined by means of magic point $x^j\in D$ by $\zeta^j(\Mu) = \exp( \sum_{i=1}^{20} \Mu_i \sqrt{\sigma_i}\phi_i(x^j) ) = \prod_{i=1}^{20}\zeta_i^j(\Mu_i)$, where $\zeta_i^j(\Mu_i)=\exp( \Mu_i\sqrt{\sigma_i}\phi_i(x^j) )$.
Together with the truncated Karhunen Lo\`eve decomposition \eqref{eq:KLdecomp} and with the EIM \eqref{eq:EIM} the approximation of the Young's modulus fields does not exceed $1\%$ of error, and the differential operator is given by
$$
 \mathcal{A}(\Mu) = - \text{div}(C(\Mu):\varepsilon(\cdot)) 
 \approx - \sum_{j=1}^{28} \text{div}(K_j:\varepsilon(\cdot)) ~ \zeta_1^j(\Mu_1)\hdots\zeta_{20}^j(\Mu_{20}) ,
$$
where the forth-order stiffness tensors $C_j$ are given as in \eqref{eq:Hooke} with $E$ replaced by $E_j$.  The parametrized stiffness matrix $A(\Mu)$, obtained via a FE discretization, admits an affine decomposition $A(\Mu) = \sum_{i=1}^{28} A_i \zeta_1^j(\Mu_1)\hdots\zeta_{20}^j(\Mu_{20})$, where $A_i\in\R^{N\times N}$ is the matrix resulting from the discretization of the continuous PDE operator $\text{div}(K_j:\varepsilon(\cdot))$. As $A(\Mu)$ is a parametrized SPD matrix, we use the \emph{Galerkin PGD} formulation \eqref{eq:MinEnergyPGD} and \eqref{eq:MinEnergyPGD_4dual} to compute the primal and dual PGD solutions $\widetilde{u}^{M}(\Mu)$ and $\widetilde{Y}^{L}(\Mu)$, respectively.

For this high-dimensional benchmark problem our goal is to monitor the convergence of the PGD. Again, we estimate the relative RMS error $\|u-\widetilde{u}^M \|/\|u\|$ using $\widetilde{\Delta}^\text{rel}_M$ as in \eqref{eq: def a post est online} where we fix $K=3$ and the rank of the approximate dual $\widetilde Y^L$ as $L=\text{rank}(\widetilde Y^L)=1,3,5$. The results are reported in Fig.~\ref{fig:EllipticMonitorPGD}. We highlight that, remarkably, a very small dual rank $L$ is enough to obtain a very accurate estimation of the error. In particular, our approach outperforms the stagnation based error estimator, which is very unstable, and also the relative RMS residual norm, which is almost one order of magnitude off. As in the previous subsection, we mention that the evaluation of the estimator $\widetilde\Delta^\text{rel}$ can be numerically expensive due to high primal and dual ranks ($M$ and $L$).

\begin{figure}[t]
  \centering 
  \includegraphics[width = \textwidth]{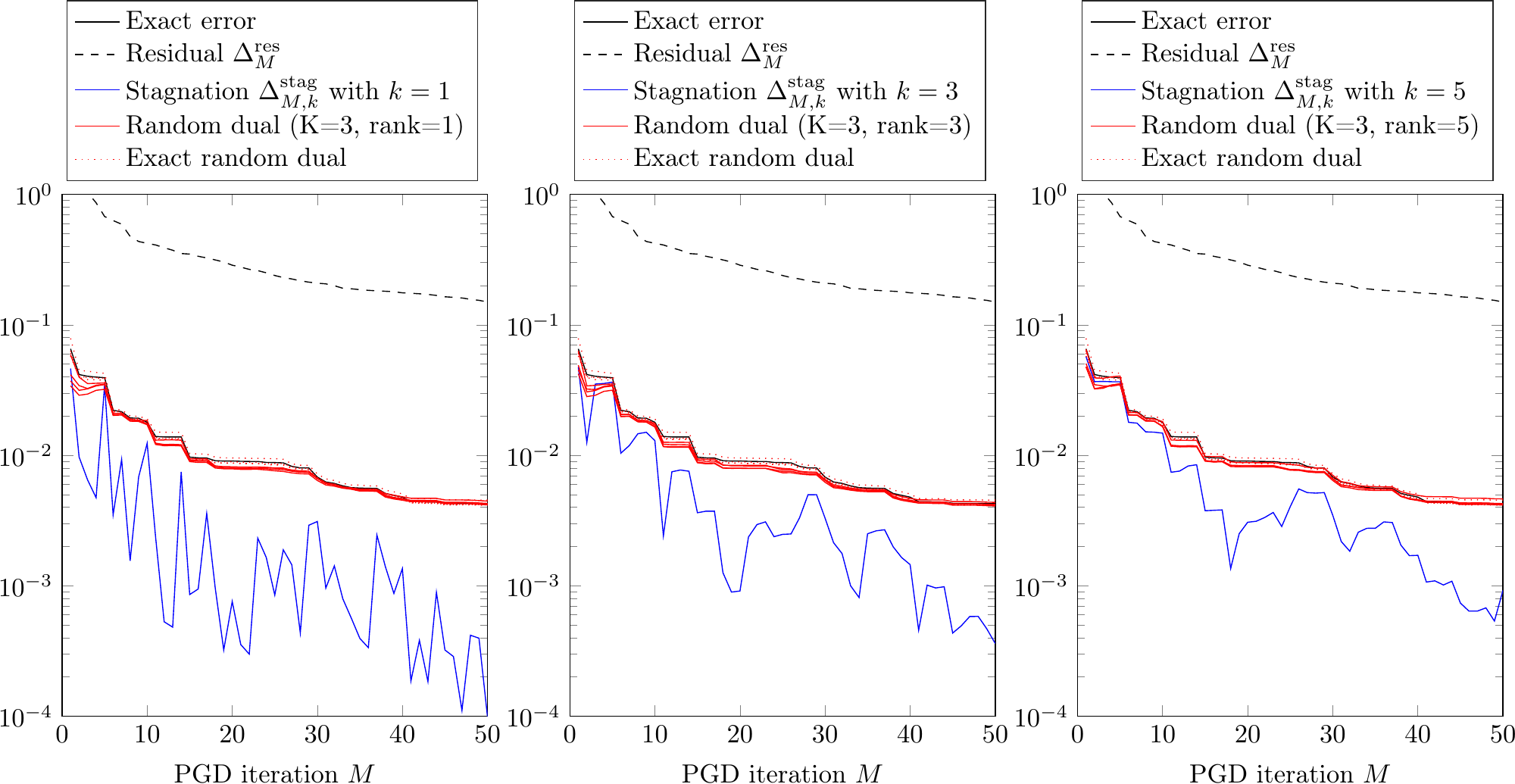} 
  \caption{Linear elasticity. Monitoring the PGD convergence: Comparison of the residual-based error $\Delta_{M}^\text{res}$ defined in \eqref{eq:RelativeResNorm} (dashed black curves), the stagnation-based estimator $\Delta_{M,k}^\text{stag}$ defined in \eqref{eq:StagnationErrorEstimator} (blue lines), and the estimator $\widetilde\Delta^\text{rel}$ defined by \eqref{eq: def a post est online RMS} with $K=3$ and with $L=\text{rank}(\widetilde Y^L)$ either equal to 1 (left), 3 (middle), or 5 (right). The dashed red lines corresponds to the exact random dual estimator $\Delta^\text{rel}$.
  }
  \label{fig:EllipticMonitorPGD}
\end{figure}

\section{Conclusions}\label{sec6}

In this paper we proposed a novel a posteriori error estimator for the PGD, extending the concept introduced in \cite{SmZaPa19} to this setting. The proposed randomized error estimator does not require the estimation of stability constants, its effectivity is close to unity and lies within an interval chosen by the user at specified high probability. Moreover, the suggested framework allows estimating the error in any norm chosen by the user and the error in some (vector-valued) QoI.
To derive the error estimator we make use of the concentration phenomenon of Gaussian maps. Exploiting the error residual relationship and approximating the associated random dual problems via the PGD yields a fast-to-evaluate a posteriori error estimator. To ensure that also the effectivity of this fast-to-evaluate error estimator lies within a prescribed interval at high probability we suggested building the primal and dual PGD approximations in a intertwined manner, where we enrich the dual PGD approximation in each iteration until the error estimator reaches the desired quality. 

The numerical experiments for a parametrized time-harmonic elastodynamics problem and a parametrized linear elasticity problem with $20$-dimensional parameter space show that very often and even for a very tight effectivity interval it is sufficient to consider a dual PGD approximation that has a significantly lower rank than the primal PGD approximation. Thanks to its favorable computational complexity, the costs for the proposed a posteriori error estimator are about the same or much smaller than the costs for the primal PGD approximation. A numerical comparison with an error estimator defined as the dual norm of the residual and a stagnation-based error estimator (also called hierarchical error estimator) shows that the randomized error estimator provides a much more precise description of the behavior of the error at a comparable computational cost.

\appendix

\section{Proofs}

\begin{proof}[Proof of Corollary \ref{coro:truth est S rel}]
Assuming $w > \sqrt{e}$ and $K\geq3$ we can use Proposition \ref{prop:Chi2Tail} for any vector $v(\Mu) \in \R^{N}$ and a fixed parameter $\Mu \in \mathcal{P}$ and obtain
\begin{align}\label{eq:aux_prop}
\mathbb{P}\Big\{ w^{-1}\|v(\Mu)\|_\Sigma \leq \| \Phi v(\Mu) \|_2  \leq w  \|v(\Mu)\|_\Sigma \Big\}  \geq 1-\Big( \frac{\sqrt{e}}{w} \Big)^{K}. 
\end{align}
Using the definition of $\Delta^\text{rel}(\Mu)$, invoking \eqref{eq:aux_prop} twice and using a union bound argument the following inequality
\begin{align}\label{eq:aux_coro}
\frac{1}{w^{2}} \Delta^\text{rel}(\Mu) = \frac{1}{w^{2}} \frac{\|\Phi (u(\Mu) - \widetilde u(\Mu)\|_{2}}{\|\Phi u(\Mu) \|_{2}} \leq \frac{1}{w} \frac{\|u(\Mu) - \widetilde u(\Mu)\|_{\Sigma}}{\|\Phi u(\Mu) \|_{2}} \leq \frac{\|u(\Mu) - \widetilde u(\Mu)\|_{\Sigma}}{\| u(\Mu) \|_{\Sigma}} \leq w \frac{\|u(\Mu) - \widetilde u(\Mu)\|_{\Sigma}}{\|\Phi u(\Mu) \|_{2}} \leq w^{2} \Delta^\text{rel}(\Mu)
\end{align}
holds at least with probability  $1-2\Big( \frac{\sqrt{e}}{w} \Big)^{K}$. Using again a union bound argument yields
\begin{align*}
 &\mathbb{P}\Big\{ \frac{1}{w^{2}} \Delta^\text{rel}(\Mu) \leq \frac{\|u(\Mu) - \widetilde u(\Mu)\|_{\Sigma}}{\| u(\Mu) \|_{\Sigma}} \leq \frac{1}{w^{2}} \Delta^\text{rel}(\Mu) ~,~\forall \Mu \in\mathcal{P} \Big\} \geq 1-2(\# \mathcal{P})  \Big( \frac{\sqrt{e}}{w} \Big)^{K}. 
\end{align*}
For a given $0<\delta<1$ condition 
$
  K\geq (\log( 2 \# \mathcal{P}) + \log(\delta^{-1}))/\log(w/\sqrt{e}),
$
is equivalent to $1-(2 \# \mathcal{P})( \frac{\sqrt{e}}{w} )^{K} \geq 1-\delta$ and ensures that \eqref{eq:aux_coro} holds for all $\Mu \in\mathcal{P}$ with probability larger than $1-\delta$. Setting $\hat{w}=w^{2}$ and renaming $\hat{w}$ as $w$ concludes the proof for $\Delta^\text{rel}(\Mu)$; the proof for $\Delta^\text{rel}$ follows similar lines.
\end{proof}

\begin{proof}[Proof of Proposition \ref{prop:dualErrorMultiplicative}]
We provide the proof for $\Delta(\Mu)$ noting that the proof for $\Delta$ can be done all the same. By Corollary \ref{coro:truth est S}, it holds with probability larger than $1-\delta$ that $w^{-1} \Delta(\Mu) \leq \|u(\Mu)-\widetilde u(\Mu) \|_\Sigma \leq w \Delta(\Mu)$ for all $\Mu\in\mathcal{P}$. Then with the same probability we have
$$
 \|u(\Mu)-\widetilde u(\Mu) \| 
 \leq  w \Delta(\Mu) 
 \leq  w \left( \sup_{\Mu'\in\mathcal{P}} \frac{\Delta(\Mu')}{\widetilde \Delta(\Mu')} \right) \widetilde \Delta(\Mu) 
 \overset{\eqref{eq:alpha}}{\leq} (\alpha_{\infty} w) \widetilde \Delta(\Mu),
$$
and
$$
 \|u(\Mu)-\widetilde u(\Mu) \| 
 \geq w^{-1} \Delta(\Mu) 
 \geq w^{-1} \left( \inf_{\Mu'\in\mathcal{P}} \frac{\Delta(\Mu')}{\widetilde \Delta(\Mu')} \right) \widetilde \Delta(\Mu) 
 \overset{\eqref{eq:alpha}}{\geq} (\alpha_{\infty} w)^{-1} \widetilde \Delta(\Mu),
$$
for any $\Mu\in\mathcal{P}$, which yields \eqref{eq:dualErrorMultiplicative} and concludes the proof.
\end{proof}

\section{Connection of $\Delta^\text{rel}(\Mu)$ with the $F$-distribution}\label{sec:f-distribution}

If we have $u(\Mu)^{T}\Sigma (u(\Mu)-\widetilde u(\Mu))=0$ we can give an even more precise description of the random variable $(\Delta^\text{rel}(\Mu)\|u(\Mu)\|_{\Sigma})^{2}/
\| u(\Mu)- \widetilde u(\Mu)\|_\Sigma^{2}$ and thus $\mathbb{P}\Big\{ w^{-1}  \Delta^\text{rel}(\Mu) \leq \frac{\|u(\Mu)-\widetilde u(\Mu) \|_\Sigma}{\|u(\Mu)\|_{\Sigma}} \leq w  \Delta^\text{rel}(\Mu) ~,~\forall \Mu \in \mathcal{P} \Big\}$: In fact, under this assumption the random variable $(\Delta^\text{rel}(\Mu)\|u(\Mu)\|_{\Sigma})^{2}/
\| u(\Mu)- \widetilde u(\Mu)\|_\Sigma^{2}$ follows an $F$-distribution with degrees of freedom $K$ and $K$; we also say $(\Delta^\text{rel}(\Mu)\|u(\Mu)\|_{\Sigma})^{2}/\| u(\Mu)- \widetilde u(\Mu)\|_\Sigma^{2} \sim F(K,K)$ as stated in the following lemma. 
\begin{lemma}[$F$-distribution]\label{lem:f-distribution}
Assume that there holds $u(\Mu)^{T}\Sigma (u(\Mu) - \urm)=0$. Then the random variable $(\Delta^\text{rel}(\Mu)\|u(\Mu)\|_{\Sigma})^{2}/\| u(\Mu)- \widetilde u(\Mu)\|_\Sigma^{2}$ follows an $F$-distribution with parameters $K$ and $K$.
\end{lemma}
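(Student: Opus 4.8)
The plan is to reduce the stated random variable to a ratio of two \emph{independent} chi-squared random variables, each with $K$ degrees of freedom, and then to invoke the definition of the $F$-distribution. First I would rewrite both the numerator and the denominator of $\Delta^\text{rel}(\Mu)$ directly in terms of the Gaussian vectors $Z_1,\dots,Z_K$. By the error-residual relationship \eqref{eq:err res relationship} we have $Y_i(\Mu)^T r(\Mu) = Z_i^T(u(\Mu)-\widetilde u(\Mu))$, and since $u(\Mu)=A(\Mu)^{-1}f(\Mu)$ the same manipulation gives $Y_i(\Mu)^T f(\Mu) = Z_i^T A(\Mu)^{-1} f(\Mu) = Z_i^T u(\Mu)$. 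Substituting these into the definition \eqref{eq:rel_err_est} of $\Delta^\text{rel}(\Mu)$ and multiplying by $\|u(\Mu)\|_\Sigma^2/\|u(\Mu)-\widetilde u(\Mu)\|_\Sigma^2$, the factors $\tfrac1K$ cancel and one obtains
\[
 \frac{(\Delta^\text{rel}(\Mu)\,\|u(\Mu)\|_\Sigma)^2}{\|u(\Mu)-\widetilde u(\Mu)\|_\Sigma^2}
 = \frac{\sum_{i=1}^K \big(Z_i^T(u(\Mu)-\widetilde u(\Mu))\big)^2 \big/ \|u(\Mu)-\widetilde u(\Mu)\|_\Sigma^2}
        {\sum_{i=1}^K \big(Z_i^T u(\Mu)\big)^2 \big/ \|u(\Mu)\|_\Sigma^2}.
\]

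Next I would identify the marginal law of the numerator and the denominator separately. Writing $e:=u(\Mu)-\widetilde u(\Mu)$, each scalar $Z_i^T e$ is zero-mean Gaussian with variance $e^T\Sigma e=\|e\|_\Sigma^2$, so the normalized quantities $Z_i^T e/\|e\|_\Sigma$ are independent standard normals and, exactly as in \eqref{eq:chi-squared}, $\sum_{i=1}^K (Z_i^T e)^2/\|e\|_\Sigma^2\sim\chi^2(K)$. Applying the same reasoning to $u(\Mu)$ shows $\sum_{i=1}^K (Z_i^T u(\Mu))^2/\|u(\Mu)\|_\Sigma^2\sim\chi^2(K)$. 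Thus the displayed ratio is already of the form $\chi^2(K)\big/\chi^2(K)$; the only thing missing is independence of the two sums.

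The crux of the argument, and the sole place where the orthogonality hypothesis enters, is establishing that independence. I would observe that the $2K$ scalars $\{Z_i^T e\}_{i=1}^K\cup\{Z_i^T u(\Mu)\}_{i=1}^K$ are jointly Gaussian, being linear images of the Gaussian vectors $Z_i$, and then compute the cross-covariances between the two blocks. For each $i$, $\mathrm{Cov}(Z_i^T e,\,Z_i^T u(\Mu)) = e^T\Sigma u(\Mu) = u(\Mu)^T\Sigma\big(u(\Mu)-\widetilde u(\Mu)\big) = 0$ by assumption, using the symmetry of $\Sigma$; and for $i\neq j$ the independence of $Z_i$ and $Z_j$ forces $\mathrm{Cov}(Z_i^T e,\,Z_j^T u(\Mu))=0$. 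Hence the block $(Z_1^T e,\dots,Z_K^T e)$ is uncorrelated with the block $(Z_1^T u(\Mu),\dots,Z_K^T u(\Mu))$, and for jointly Gaussian vectors uncorrelatedness implies independence. Consequently the numerator and denominator chi-squared variables are independent, so the ratio is $\big(\chi^2(K)/K\big)\big/\big(\chi^2(K)/K\big)$, which is by definition $F(K,K)$, proving the lemma. I expect the independence step to be the main obstacle in the sense that it is where the proof genuinely uses the hypothesis: without $u(\Mu)^T\Sigma(u(\Mu)-\widetilde u(\Mu))=0$ the two chi-squared variables are dependent and the ratio is no longer $F$-distributed.
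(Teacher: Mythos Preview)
Your proof is correct and the overall structure matches the paper's: rewrite the ratio as $(Q_1/K)/(Q_2/K)$ with $Q_1,Q_2\sim\chi^2(K)$, and then establish that $Q_1$ and $Q_2$ are independent. The only genuine difference lies in the independence step. The paper factors $\Sigma=\Sigma^{1/2}\Sigma^{1/2}$, writes $Z_i=\Sigma^{1/2}g_i$ with $g_i$ standard Gaussian, and then chooses an orthogonal matrix $V(\Mu)$ that sends the (normalized) vectors $\Sigma^{1/2}u(\Mu)$ and $\Sigma^{1/2}(u(\Mu)-\widetilde u(\Mu))$ to the first two coordinate directions; after this rotation, $(Z_i^Tu(\Mu))^2$ and $(Z_i^T e)^2$ become squares of \emph{different coordinates} of a standard Gaussian $\bar g_i=Vg_i$, whence independence. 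You instead compute the cross-covariances directly, observe they vanish by the hypothesis $u(\Mu)^T\Sigma(u(\Mu)-\widetilde u(\Mu))=0$, and invoke that uncorrelated jointly Gaussian vectors are independent. Your route is shorter and avoids the auxiliary factorization and rotation; the paper's construction, on the other hand, makes the geometric content of the orthogonality assumption explicit. Both arguments are equivalent in strength and yield the same conclusion.
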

\begin{proof}
We will exploit the fact that in order to show that a random variable $X$ follows an $F$-distribution it is sufficient to show that we have $X = \frac{Q_{1}/K_{1}}{Q_{2}/K_{2}}$, where $Q_{1}$ and $Q_{2}$ are independent random variables that follow a chi-square distribution with $K_{1}$ and $K_{2}$ degrees of freedom, respectively (see for instance \cite[Definition 9.7.1]{DeSc12}). Similar to \eqref{eq:chi-squared} we have that
\begin{equation*}
 \Delta^\text{rel}(\Mu)^{2} \frac{\|u(\Mu)\|_{\Sigma}^{2}}{\| u(\Mu)- \widetilde u(\Mu)\|_\Sigma^{2}} = \frac{  \frac{1}{K} \sum_{k=1}^{K} \Big(Z_{i}^{T} \big(u(\Mu)-\widetilde u(\Mu) \big) \Big)^{2} }{\frac{1}{K} \sum_{k=1}^{K} \Big(Z_{i}^{T} u(\Mu) \Big)^{2}}
\frac{\|u(\Mu)\|_{\Sigma}^{2}}{\| u(\Mu)- \widetilde u(\Mu)\|_\Sigma^{2}} = \frac{  \frac{1}{K} \sum_{k=1}^{K} \Big(Z_{i}^{T} \frac{\big(u(\Mu)-\widetilde u(\Mu) \big)}{\| u(\Mu)- \widetilde u(\Mu)\|_\Sigma}\Big)^{2} }{\frac{1}{K} \sum_{k=1}^{K} \Big(Z_{i}^{T} \frac{u(\Mu)}{\|u(\Mu)\|_{\Sigma}} \Big)^{2} } \sim \frac{\frac{1}{K}Q_{1}}{\frac{1}{K}Q_{2}},
\end{equation*}
where $Q_{1}$ and $Q_{2}$ follow a chi-squared distribution with $K$ degrees of freedom. It thus remains to show that under the above assumption $Q_{1}$ and $Q_{2}$ are independent. To that end, recall that $Z_{1},\hdots,Z_{K}$ are independent copies of $Z\sim \mathcal{N}(0,\Sigma)$. Per our assumption that $\Sigma$ is symmetric and positive semi-definite there exists a symmetric matrix $\Sigma^{1/2}$ that satisfies $\Sigma = \Sigma^{1/2}\Sigma^{1/2}$, exploiting for instance the Schur decomposition of $\Sigma$. Then we have $Z_{i} = \Sigma^{1/2}g_{i}$, where $g_{i}$ are $K$ independent copies of a standard Gaussian random vector. Thanks to the assumption $u(\Mu)^{T}\Sigma (u(\Mu) - \urm)=0$ the vectors $\hat{u}(\Mu):= \Sigma^{1/2}u(\Mu)$ and $\hat{e}(\Mu):=\Sigma^{1/2}(u(\Mu) - \urm)$ satisfy $\hat{u}(\Mu)^{T}\hat{e}(\Mu)=0$. As a consequence there exists an orthogonal matrix $V(\Mu) \in \R^{N\times N}$ that maps $\hat{u}(\Mu)$ to $e_{1}$ and $\hat{e}(\Mu)$ to $e_{2}$, respectively, where $e_{j}$, $j=1,\hdots,N$ denote the standard basis vectors in $\R^{N}$. We can then write for $i=1,\hdots,K$
\begin{equation}
(Z_{i}^{T}u(\Mu))^{2}=([\Sigma^{1/2}g_{i}]^{T} u(\Mu))^{2} = (g_{i}^{T}\Sigma^{1/2}u(\Mu))^{2} = (g_{i}^{T}\hat{u}(\Mu))^{2} = (g_{i}^{T}V(\Mu)^{T}V(\Mu)\hat{u}(\Mu))^{2} = ([V(\Mu)g_{i}]^{T}e_{1})^{2}.
\end{equation}
As there holds $V(\Mu)g_{i} \sim \mathcal{N}(0,I_{N})$, where $I_{N}$ is the identity matrix in $\R^{N}$, we have for  $\bar{g}_{i} \sim \mathcal{N}(0,I_{N})$
\begin{equation}
(Z_{i}^{T}u(\Mu))^{2} = (\bar{g}_{i}^{T}e_{1})^{2} = (\bar{g}^{1}_{i})^{2} \qquad \text{ and } \qquad  [Z_{i}^{T} \big(u(\Mu)-\widetilde u(\Mu) \big)]^{2} = (\bar{g}_{i}^{T}e_{2 })^{2} = (\bar{g}^{2}_{i})^{2}.
\end{equation}
Here, $\bar{g}^{s}_{i}$ denotes the $s$-th entry of the vector $\bar{g}_{i}$. As the entries of a standard Gaussian random vector are independent, we can thus conclude that the random variables $Q_{1}$ and $Q_{2}$ are independent as well. 
\end{proof}
We may then use the cumulative distribution function (cdf) of the $F$-distribution and an union bound argument to obtain the following result.
\begin{corollary}
Assume that there holds $u(\Mu)^{T}\Sigma (u(\Mu)-\widetilde u(\Mu))=0$. Then, we have 
\begin{equation}
  \mathbb{P}\Big\{ w^{-1} \Delta^\text{rel}(\Mu) \leq \frac{\|u(\Mu)-\widetilde u(\Mu) \|_\Sigma}{\|u(\Mu)\|_{\Sigma}} \leq w \Delta^\text{rel}(\Mu) ~,~\forall \Mu \in \mathcal{P} \Big\} \geq 1 - (\#\mathcal{P})\left(I_{\left(\frac{K}{K+w^{2}K}\right)}\left(\frac{K}{2},\frac{K}{2}\right) + 1 - I_{\left(\frac{Kw^{2}}{Kw^{2}+K}\right)}\left(\frac{K}{2},\frac{K}{2}\right)\right),
\end{equation}
where $I_{x}(a,b)$ is the regularized incomplete beta function defined as $(\int_{0}^{x} t^{a-1}(1-t)^{b-1} \,dt)/(\int_{0}^{1} t^{a-1}(1-t)^{b-1} \,dt)$. Note that the cdf $F(x,K,K)$ of an $F$-distribution with $K$ and $K$ degrees of freedom is given as $F(x,K,K)=I_{\left(\frac{Kx}{Kx+K}\right)}\left(\frac{K}{2},\frac{K}{2}\right)$.
\end{corollary}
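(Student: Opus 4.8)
The plan is to reduce the two–sided event to a single statement about an $F$-distributed random variable and then close with a union bound. Fix $\Mu\in\mathcal{P}$ and set
\[
 T(\Mu) := \frac{\Delta^\text{rel}(\Mu)^{2}\,\|u(\Mu)\|_{\Sigma}^{2}}{\|u(\Mu)-\widetilde u(\Mu)\|_{\Sigma}^{2}},
\]
which, under the orthogonality assumption $u(\Mu)^{T}\Sigma(u(\Mu)-\widetilde u(\Mu))=0$, satisfies $T(\Mu)\sim F(K,K)$ by Lemma \ref{lem:f-distribution}. The whole argument runs at the level of this one variable.

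First I would rewrite the event to be bounded. Dividing the chain $w^{-1}\Delta^\text{rel}(\Mu)\le \|u(\Mu)-\widetilde u(\Mu)\|_{\Sigma}/\|u(\Mu)\|_{\Sigma}\le w\,\Delta^\text{rel}(\Mu)$ through by the positive quantity $\Delta^\text{rel}(\Mu)\|u(\Mu)\|_{\Sigma}$ shows it is equivalent to $w^{-1}\le T(\Mu)^{-1/2}\le w$, and hence, after squaring and inverting, to $w^{-2}\le T(\Mu)\le w^{2}$. The one point requiring care is the reversal of the inequalities when passing to reciprocals; everything else is bookkeeping.

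Next I would evaluate the per-parameter failure probability. Writing $F(\cdot,K,K)$ for the cdf of the $F(K,K)$ law and using that $T(\Mu)$ is continuously distributed, the complement of $\{w^{-2}\le T(\Mu)\le w^{2}\}$ has probability
\[
 \mathbb{P}\{T(\Mu)<w^{-2}\}+\mathbb{P}\{T(\Mu)>w^{2}\}
 = F(w^{-2},K,K) + 1 - F(w^{2},K,K).
\]
Substituting the closed form $F(x,K,K)=I_{Kx/(Kx+K)}(K/2,K/2)$ stated in the corollary and simplifying the two arguments, namely $Kw^{-2}/(Kw^{-2}+K)=K/(K+w^{2}K)$ for the first term and $Kw^{2}/(Kw^{2}+K)$ for the second, reproduces exactly the per-parameter failure expression appearing in the statement.

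Finally I would apply a union bound over the $\#\mathcal{P}$ parameters. Since each marginal $T(\Mu)$ is $F(K,K)$-distributed, the probability that the two-sided bound fails for at least one $\Mu\in\mathcal{P}$ is at most $\#\mathcal{P}$ times the per-parameter failure probability; taking the complement yields the claimed lower bound on the simultaneous success probability. Note that no independence across parameters is needed, so the union bound is both the natural and the only probabilistic tool required. I expect no genuine obstacle here: the delicate steps are merely the algebraic reduction of the event to $\{w^{-2}\le T(\Mu)\le w^{2}\}$ and the simplification of the regularized incomplete-beta arguments, both of which are routine once Lemma \ref{lem:f-distribution} is invoked.
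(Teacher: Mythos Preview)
Your proposal is correct and follows exactly the route the paper indicates: invoke Lemma~\ref{lem:f-distribution} to identify $T(\Mu)\sim F(K,K)$, rewrite the two-sided event as $\{w^{-2}\le T(\Mu)\le w^{2}\}$, read off the per-parameter failure probability from the $F(K,K)$ cdf, and close with a union bound over $\mathcal{P}$. One trivial slip: you say you divide the chain by $\Delta^\text{rel}(\Mu)\|u(\Mu)\|_{\Sigma}$, but you actually divide by $\Delta^\text{rel}(\Mu)$ alone to obtain $w^{-1}\le T(\Mu)^{-1/2}\le w$; the subsequent steps are unaffected.
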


\newpage


\begin{thebibliography}{10}

\bibitem{Alfetal15}
{\sc I.~Alfaro, D.~Gonz{\'a}lez, S.~Zlotnik, P.~D{\'\i}ez, E.~Cueto, and
  F.~Chinesta}, {\em An error estimator for real-time simulators based on model
  order reduction}, Adv. Model. and Simul. in Eng. Sci., 2 (2015).

\bibitem{AmChHu10}
{\sc A.~Ammar, F.~Chinesta, P.~Diez, and A.~Huerta}, {\em An error estimator
  for separated representations of highly multidimensional models}, Comput.
  Methods Appl. Mech. Engrg., 199 (2010), pp.~1872--1880.

\bibitem{AmMoChKe06}
{\sc A.~Ammar, B.~Mokdad, F.~Chinesta, and R.~Keunings}, {\em A new family of
  solvers for some classes of multidimensional partial differential equations
  encountered in kinetic theory modeling of complex fluids}, J. Non-Newtonian
  Fluid Mech., 139 (2006), pp.~153--176.

\bibitem{AmMoChKe07}
\leavevmode\vrule height 2pt depth -1.6pt width 23pt, {\em A new family of
  solvers for some classes of multidimensional partial differential equations
  encountered in kinetic theory modelling of complex fluids: Part ii: Transient
  simulation using space-time separated representations}, J. Non-Newtonian
  Fluid Mech., 144 (2007), pp.~98--121.

\bibitem{BalNou18}
{\sc O.~Balabanov and A.~Nouy}, {\em Randomized linear algebra for model
  reduction. part i: Galerkin methods and error estimation}, tech. rep., arXiv,
  1803.02602, 2018.

\bibitem{BMNP04}
{\sc M.~Barrault, Y.~Maday, N.~Nguyen, and A.~Patera}, {\em An 'empirical
  interpolation' method: application to efficient reduced-basis discretization
  of partial differential equations}, C. R. Math. Acad. Sci. {P}aris {S}eries
  {I}, 339 (2004), pp.~667--672.

\bibitem{BFNoZa14}
{\sc M.~Billaud-Friess, A.~Nouy, and O.~Zahm}, {\em A tensor approximation
  method based on ideal minimal residual formulations for the solution of
  high-dimensional problems}, ESAIM Math. Model. Numer. Anal., 48 (2014),
  pp.~1777--1806.

\bibitem{BTOS16}
{\sc J.~Brunken, T.~Leibner, M.~Ohlberger, and K.~Smetana}, {\em {Problem
  adapted Hierarchical Model Reduction for the Fokker-Planck equation}}, in
  {Proceedings of ALGORITMY 2016, 20th Conference on Scientific Computing,
  Vysoke Tatry, Podbanske, Slovakia, March 13-18, 2016}, {Proceedings of
  ALGORITMY 2016, 20th Conference on Scientific Computing, Vysoke Tatry,
  Podbanske, Slovakia, March 13-18, 2016}, 2016, pp.~13--22.

\bibitem{BuhSme18}
{\sc A.~Buhr and K.~Smetana}, {\em Randomized {L}ocal {M}odel {O}rder
  {R}eduction}, SIAM J. Sci. Comput., 40 (2018), pp.~A2120--A2151.

\bibitem{CaEhLe13}
{\sc E.~Canc\'{e}s, V.~Ehrlacher, and T.~Leli\'{e}vre}, {\em Greedy algorithms
  for high-dimensional non-symmetric linear problems}, in C{ANUM} 2012, 41e
  {C}ongr{\`e}s {N}ational d'{A}nalyse {N}um\'{e}rique, vol.~41 of ESAIM Proc.,
  EDP Sci., Les Ulis, 2013, pp.~95--131.

\bibitem{CaoPet04}
{\sc Y.~Cao and L.~Petzold}, {\em A posteriori error estimation and global
  error control for ordinary differential equations by the adjoint method},
  SIAM J. Sci. Comput., 26 (2004), pp.~359--374.

\bibitem{Chetal17}
{\sc L.~Chamoin, F.~Pled, P.-E. Allier, and P.~Ladev{\`e}ze}, {\em A posteriori
  error estimation and adaptive strategy for pgd model reduction applied to
  parametrized linear parabolic problems}, Comput. Methods Appl. Mech. Engrg.,
  327 (2017), pp.~118--146.

\bibitem{Cetal09}
{\sc Y.~Chen, J.~S. Hesthaven, Y.~Maday, and J.~Rodr{\'\i}guez}, {\em Improved
  successive constraint method based a posteriori error estimate for reduced
  basis approximation of 2{D} {M}axwell's problem}, M2AN Math. Model. Numer.
  Anal., 43 (2009), pp.~1099--1116.

\bibitem{ChiAmmCue2010}
{\sc F.~Chinesta and E.~Ammar, A.and~Cueto}, {\em Recent advances and new
  challenges in the use of the proper generalized decomposition for solving
  multidimensional models}, Arch. Comput. Methods Eng., 17 (2010),
  pp.~327--350.

\bibitem{ChiCue14}
{\sc F.~Chinesta and E.~Cueto}, {\em {PGD-Based Modeling of Materials,
  Structures and Processes}}, Springer International Publishing, 2014.

\bibitem{ChKeLe14}
{\sc F.~Chinesta, R.~Keunings, and A.~Leygue}, {\em The proper generalized
  decomposition for advanced numerical simulations. A primer}, SpringerBriefs
  in Applied Sciences and Technology, Springer, Cham, 2014.

\bibitem{ChLaCu11}
{\sc F.~Chinesta, P.~Ladeveze, and E.~Cueto}, {\em A short review on model
  order reduction based on proper generalized decomposition}, Arch. Comput.
  Methods Eng., 18 (2011), pp.~395--404.

\bibitem{dasgupta1999elementary}
{\sc S.~Dasgupta and A.~Gupta}, {\em {An elementary proof of the
  Johnson-Lindenstrauss lemma}}, International Computer Science Institute,
  Technical Report,  (1999), pp.~99--006.

\bibitem{DeSc12}
{\sc M.~H. DeGroot and M.~J. Schervish}, {\em Probability and statistics.
  Fourth Edition}, Pearson Education, 2012.

\bibitem{du1999centroidal}
{\sc Q.~Du, V.~Faber, and M.~Gunzburger}, {\em Centroidal voronoi
  tessellations: Applications and algorithms}, SIAM Rev., 41 (1999),
  pp.~637--676.

\bibitem{Eetal19}
{\sc M.~Eigel, R.~Schneider, P.~Trunschke, and S.~Wolf}, {\em Variational monte
  carlo -- bridging concepts of machine learning and high-dimensional partial
  differential equationsbridging concepts of machine learning and
  high-dimensional partial differential equations}, Adv. Comput. Math.,
  (2019).

\bibitem{FHMM13}
{\sc A.~Falc\'{o}, L.~Hilario, N.~Mont\'{e}s, and M.~C. Mora}, {\em Numerical
  strategies for the {G}alerkin-proper generalized decomposition method}, Math.
  Comput. Modelling, 57 (2013), pp.~1694--1702.

\bibitem{GNLC13}
{\sc L.~Giraldi, A.~Nouy, G.~Legrain, and P.~Cartraud}, {\em Tensor-based
  methods for numerical homogenization from high-resolution images}, Comput.
  Methods Appl. Mech. Engrg., 254 (2013), pp.~154--169.

\bibitem{Haa17}
{\sc B.~Haasdonk}, {\em Reduced basis methods for parametrized {PDE}s -- a
  tutorial}, in Model Reduction and Approximation, P.~Benner, A.~Cohen,
  M.~Ohlberger, and K.~Willcox, eds., SIAM, Philadelphia, PA, 2017,
  pp.~65--136.

\bibitem{Hac12}
{\sc W.~Hackbusch}, {\em Tensor spaces and numerical tensor calculus},
  Springer, Heidelberg, 2012.

\bibitem{HeRoSt16}
{\sc J.~Hesthaven, G.~Rozza, and B.~Stamm}, {\em Certified reduced basis
  methods for parametrized partial differential equations}, Springer Briefs in
  Mathematics, Springer, Cham, 2016.

\bibitem{HoPeSe07}
{\sc C.~Homescu, L.~Petzold, and R.~Serban}, {\em {Error Estimation for
  Reduced-Order Models of Dynamical Systems}}, SIAM Rev., 49 (2007),
  pp.~277--299.

\bibitem{Hetal10}
{\sc D.~Huynh, D.~Knezevic, Y.~Chen, J.~Hesthaven, and A.~Patera}, {\em {A
  natural-norm Successive Constraint Method for inf-sup lower bounds}}, Comput.
  Methods Appl. Mech. Engrg., 199 (2010), pp.~1963 -- 1975.

\bibitem{Hetal07}
{\sc D.~B.~P. Huynh, G.~Rozza, S.~Sen, and A.~T. Patera}, {\em A successive
  constraint linear optimization method for lower bounds of parametric
  coercivity and inf-sup stability constants}, C. R. Math. Acad. Sci. Paris,
  345 (2007), pp.~473--478.

\bibitem{JaNoPr16}
{\sc A.~Janon, M.~Nodet, and C.~Prieur}, {\em Goal-oriented error estimation
  for the reduced basis method, with application to sensitivity analysis}, J.
  Sci. Comput., 68 (2016), pp.~21--41.

\bibitem{johnson1984extensions}
{\sc W.~B. Johnson and J.~Lindenstrauss}, {\em {Extensions of Lipschitz
  mappings into a Hilbert space}}, Contemp. Math., 26 (1984), p.~1.

\bibitem{KenLau94}
{\sc C.~S. Kenney and A.~J. Laub}, {\em Small-sample statistical condition
  estimates for general matrix functions}, SIAM J. Sci. Comput., 15 (1994),
  pp.~36--61.

\bibitem{KCLP19}
{\sc K.~Kergrene, L.~Chamoin, M.~Laforest, and S.~Prudhomme}, {\em On a
  goal-oriented version of the proper generalized decomposition method}, J.
  Sci. Comput.,  (2019).

\bibitem{LadCha11}
{\sc P.~Ladev{\`e}ze and L.~Chamoin}, {\em On the verification of model
  reduction methods based on the proper generalized decomposition}, Comput.
  Methods Appl. Mech. Engrg., 200 (2011), pp.~2032--2047.

\bibitem{LadCha12}
\leavevmode\vrule height 2pt depth -1.6pt width 23pt, {\em Toward guaranteed
  PGD-reduced models. Bytes and Science}, CIMNE: Barcelona, 2012, pp.~143--154.

\bibitem{lam2018multifidelity}
{\sc R.~Lam, O.~Zahm, Y.~Marzouk, and K.~Willcox}, {\em Multifidelity dimension
  reduction via active subspaces}, arXiv preprint arXiv:1809.05567,  (2018).

\bibitem{LeBLelMad09}
{\sc C.~Le~Bris, T.~Leli{\`e}vre, and Y.~Maday}, {\em Results and questions on
  a nonlinear approximation approach for solving high-dimensional partial
  differential equations}, Constr. Approx., 30 (2009), pp.~621--651.

\bibitem{Moi13}
{\sc J.~P. {Moitinho de Almeida}}, {\em A basis for bounding the errors of
  proper generalised decomposition solutions in solid mechanics}, Internat. J.
  Numer. Methods Engrg., 94 (2013), pp.~961--984.

\bibitem{Nadetal15}
{\sc E.~Nadal, A.~Leygue, F.~Chinesta, M.~Beringhier, J.~J. R\'{o}denas, and
  F.~J. Fuenmayor}, {\em A separated representation of an error indicator for
  the mesh refinement process under the proper generalized decomposition
  framework}, Comput. Mech., 55 (2015), pp.~251--266.

\bibitem{Nouy07}
{\sc A.~Nouy}, {\em A generalized spectral decomposition technique to solve a
  class of linear stochastic partial differential equations}, Comput. Methods
  Appl. Mech. Engrg., 196 (2007), pp.~4521--4537.

\bibitem{Nou17}
{\sc A.~Nouy}, {\em Low-rank methods for high-dimensional approximation and
  model order reduction}, in Model reduction and approximation, P.~Benner,
  A.~Cohen, M.~Ohlberger, and K.~Willcox, eds., SIAM, Philadelphia, PA, 2017,
  pp.~171--226.

\bibitem{OhlSme14}
{\sc M.~Ohlberger and K.~Smetana}, {\em A dimensional reduction approach based
  on the application of reduced basis methods in the framework of hierarchical
  model reduction}, SIAM J. Sci. Comput., 36 (2014), pp.~A714--A736.

\bibitem{PerErnVen10}
{\sc S.~Perotto, A.~Ern, and A.~Veneziani}, {\em Hierarchical local model
  reduction for elliptic problems: a domain decomposition approach}, Multiscale
  Model. Simul., 8 (2010), pp.~1102--1127.

\bibitem{pierce2000adjoint}
{\sc N.~A. Pierce and M.~B. Giles}, {\em {Adjoint recovery of superconvergent
  functionals from PDE approximations}}, SIAM Rev., 42 (2000), pp.~247--264.

\bibitem{QuMaNe16}
{\sc A.~Quarteroni, A.~Manzoni, and F.~Negri}, {\em Reduced basis methods for
  partial differential equations}, vol.~92, Springer, Cham, 2016.

\bibitem{RoHuPa08}
{\sc G.~Rozza, D.~Huynh, and A.~Patera}, {\em Reduced basis approximation and a
  posteriori error estimation for affinely parametrized elliptic coercive
  partial differential equations: application to transport and continuum
  mechanics}, Arch. Comput. Meth. Eng., 15 (2008), pp.~229--275.

\bibitem{SmeOhl17}
{\sc K.~Smetana and M.~Ohlberger}, {\em Hierarchical model reduction of
  nonlinear partial differential equations based on the adaptive empirical
  projection method and reduced basis techniques}, ESAIM Math. Model. Numer.
  Anal., 51 (2017), pp.~641--677.

\bibitem{SmZaPa19}
{\sc K.~Smetana, O.~Zahm, and A.~T. Patera}, {\em Randomized residual-based
  error estimators for parametrized equations}, SIAM J. Sci. Comput., 41
  (2019), pp.~A900--A926.

\bibitem{VePrRoPa03}
{\sc K.~Veroy, C.~Prud'homme, D.~Rovas, and A.~Patera}, {\em A posteriori error
  bounds for reduced-basis approximation of parametrized noncoercive and
  nonlinear elliptic partial differential equations}, 16th AIAA Computational
  Fluid Dynamics Conference,  (2003).

\bibitem{Ver18}
{\sc R.~Vershynin}, {\em High-dimensional probability. An introduction with
  applications in data science}, vol.~47 of Cambridge Series in Statistical and
  Probabilistic Mathematics, Cambridge University Press, Cambridge, 2018.

\bibitem{VogBab81a}
{\sc M.~Vogelius and I.~Babu{\v{s}}ka}, {\em On a dimensional reduction method.
  {I}. {T}he optimal selection of basis functions}, Math. Comp., 37 (1981),
  pp.~31--46.

\bibitem{zahm2017projection}
{\sc O.~Zahm, M.~Billaud-Friess, and A.~Nouy}, {\em Projection-based model
  order reduction methods for the estimation of vector-valued variables of
  interest}, SIAM J. Sci. Comput., 39 (2017), pp.~A1647--A1674.

\bibitem{zahm2016interpolation}
{\sc O.~Zahm and A.~Nouy}, {\em Interpolation of inverse operators for
  preconditioning parameter-dependent equations}, SIAM J. Sci. Comput., 38
  (2016), pp.~A1044--A1074.

\end{thebibliography}
\end{document}